\newcommand{\ES}{E_\text{sym}}
\newcommand{\EA}{E_\text{asym}}
\newcommand{\nd}{\noindent}
\newcommand{\C}[1][]{\ensuremath{\mathbb{C}^{#1}}\xspace}	% direct access to C, C^n
\newcommand{\R}[1][]{\ensuremath{\mathbb{R}^{#1}}\xspace}	% direct access to R, R^n
\newcommand{\N}[1][]{\ensuremath{\mathbb{N}^{#1}}\xspace}	% direct access to N, N^n
\newcommand{\beq}{\begin{equation}}
\newcommand{\eeq}{\end{equation}}
\newcommand{\defas}{\mathrel{\mathop{:}\!\!=}}  % := 
\newcommand{\asdef}{\mathrel{=\!\mathop{:}}}    % =:
\newcommand{\with}{\;\middle\vert\;}  % for set definitions {x | ...}
\newcommand{\degr}{\operatorname{deg}}
\newcommand{\tou}{\uparrow}
\newcommand{\tod}{\downarrow}
\newcommand{\wto}{\xrightharpoonup{}}          % weak convergence
\newcommand{\ft}{\mathcal{F}}   % script-F denoting the Fourier transform
\newcommand{\hm}{\mathcal{H}}
\newcommand{\co}{\mathcal{O}}
\newcommand{\so}{o}
\newcommand{\dds}{\tfrac{d}{ds}}
\def\Xint#1{\mathchoice
{\XXint\displaystyle\textstyle{#1}}%
{\XXint\textstyle\scriptstyle{#1}}%
{\XXint\scriptstyle\scriptscriptstyle{#1}}%
{\XXint\scriptscriptstyle\scriptscriptstyle{#1}}%
\!\int}
\def\XXint#1#2#3{{\setbox0=\hbox{$#1{#2#3}{\int}$}
\vcenter{\hbox{$#2#3$}}\kern-.5\wd0}}
\def\dashint{\Xint-}
\DeclareMathOperator{\sgn}{sgn}
\newcommand{\interitemtext}[1]{%
\begin{list}{}
{\itemindent=0mm\labelsep=0mm
\labelwidth=0mm\leftmargin=0mm
\addtolength{\leftmargin}{-\@totalleftmargin}}
\item #1
\end{list}}
\theoremstyle{plain}
\theoremstyle{definition}
\theoremstyle{remark}
\theoremstyle{plain}
\newtheorem{thm}{Theorem}
\newtheorem*{thm*}{Theorem}
\newtheorem{prop}{Proposition}
\newtheorem*{prop*}{Proposition}
\newtheorem{lem}{Lemma}
\newtheorem{cor}{Corollary}
\newtheorem{open}{Open problem}
\newtheorem{rem}{Remark}
\theoremstyle{definition}
\theoremstyle{remark}
\title{A reduced model for domain walls in soft ferromagnetic films at the cross-over from symmetric to asymmetric wall types.}
\author{Lukas D\"oring\thanks{Max Planck Institute for Mathematics in the Sciences, Inselstra{\ss}e 22, 04103 Leipzig, Germany (email: Lukas.Doering@mis.mpg.de, Felix.Otto@mis.mpg.de)} \quad Radu Ignat\thanks{Institut de Math\'ematiques de Toulouse, Universit\'e Paul Sabatier, 118 Route de Narbonne, 31062 Toulouse, France (email: Radu.Ignat@math.univ-toulouse.fr)} \quad Felix Otto\footnotemark[1]}
\begin{document}
\maketitle

\begin{abstract}
  We study the Landau-Lifshitz model for the energy of multi-scale transition layers -- called ``domain walls'' -- in soft ferromagnetic films. Domain walls separate domains of constant magnetization vectors $m^\pm_\alpha\in\mathbb{S}^2$ that differ by an angle $2\alpha$. Assuming translation invariance tangential to the wall, our main result is the rigorous derivation of a reduced model for the energy of the optimal transition layer, which in a certain parameter regime confirms the experimental, numerical and physical predictions: The minimal energy splits into a contribution from an asymmetric, divergence-free core which performs a partial rotation in $\mathbb{S}^2$ by an angle $2\theta$, and a contribution from two symmetric, logarithmically decaying tails, each of which completes the rotation from angle $\theta$ to $\alpha$ in $\mathbb{S}^1$. The angle $\theta$ is chosen such that the total energy is minimal. The contribution from the symmetric tails is known explicitly, while the contribution from the asymmetric core is analyzed
in \cite{dioasymwalls12}.

  Our reduced model is the starting point for the analysis of a bifurcation phenomenon from symmetric to asymmetric domain walls. Moreover, it allows for capturing asymmetric domain walls including their extended tails (which were previously inaccessible to brute-force numerical simulation).
\end{abstract}

\vfill

{\small\textbf{Keywords:} $\Gamma$-convergence, concentration-compactness, transition layer, bifurcation, micromagnetics.\\
\textbf{MSC:} 49S05, 49J45, 78A30, 35B32, 35B36\\
\textbf{Submitted to:} Journal of the European Mathematical Society}

\section{Introduction}
\subsection{Model}
We consider the following model: The magnetization is described by a unit-length vector field
$$
  m=(m_1, m_2, m_3) \colon \Omega \to \mathbb{S}^2,$$
 where  the two-dimensional domain $$\Omega = \R \times (-1,1)
$$
corresponds to a cross-section of the sample that is parallel to the $x_1x_3$-plane. The following ``boundary conditions at $x_1=\pm\infty$'' are imposed so that a transition from the angle $-\alpha$ to $\alpha \in (0,\tfrac{\pi}{2}]$ is generated and a
%magnetic wall
domain wall forms parallel to the $x_2x_3$-plane (see Figure~\ref{fig:samplegeometry}):
\begin{align}\label{eq:bcinfty}
  m(\pm\infty,\cdot)=m^\pm_\alpha := (\cos \alpha, \pm\sin\alpha, 0),
\end{align}
with the convention:
\beq
\label{convent}
  f(\pm\infty,\cdot) = a_\pm \iff \int_{\Omega_+} \lvert f - a_+ \rvert^2 \, dx + \int_{\Omega_-} \lvert f - a_- \rvert^2 \, dx < \infty,
\eeq
where $\Omega_+=\Omega\cap\{x_1\geq0\}$ and $\Omega_-=\Omega\cap\{x_1\leq0\}$. Throughout the paper, we use the variables $x=(x_1,x_3)\in \Omega$ together with the differential operator $\nabla=(\partial_{x_1}, \partial_{x_3})$, and we denote by $m'=(m_1, m_3)$ the projection of $m$ on the $x_1x_3$-plane.

\begin{figure}[hb]
\centering
\begin{pspicture}(0,0)(8,3)
% cross-section
\psframe[fillstyle=solid,fillcolor=gray](1.5,0.5)(6.5,1.5)
\psline(7.3,1.25)(6.3,0.9)
\rput(7.5,1.25){\psscalebox{0.7}{$\Omega$}}
% sample
\psframe[linestyle=dashed](0,0)(5,1)
\psline[linestyle=dashed](0,1)(3,2)
\psline[linestyle=dashed](5,1)(8,2)
\psline[linestyle=dashed](5,0)(8,1)
\psline[linestyle=dashed](3,2)(8,2)
\psline[linestyle=dashed](8,1)(8,2)
% coordinate directions
\psline{->}(7.5,0)(8,0)
\rput[l](8.1,-0.1){\psscalebox{0.7}{$x_1$}}
\psline{->}(7.5,0)(7.8,0.1)
\rput[lb](7.9,0.1){\psscalebox{0.7}{$x_2$}}
\psline{->}(7.5,0)(7.5,0.5)
\rput[b](7.6,0.55){\psscalebox{0.7}{$x_3$}}
% magnetization
% top
\psline[linewidth=1pt]{<-}(2.4,1.66)(2.8,1.8)
\psline[linewidth=1pt]{<-}(3.7,1.62)(3.5,1.82)
\psline[linewidth=1pt]{->}(4.3,1.73)(4.8,1.73)
\psline[linewidth=1pt]{->}(5.3,1.7)(5.8,1.78)
\psline[linewidth=1pt]{->}(6.4,1.66)(6.8,1.8)
% bottom
\psline[linewidth=1pt]{<-}(1,1.2)(1.4,1.34)
\psline[linewidth=1pt]{<-}(2.3,1.16)(2.1,1.36)
\psline[linewidth=1pt]{->}(2.9,1.27)(3.4,1.27)
\psline[linewidth=1pt]{->}(3.9,1.24)(4.4,1.32)
\psline[linewidth=1pt]{->}(5,1.2)(5.4,1.34)
\end{pspicture}
\caption{The cross-section $\Omega$ in a ferromagnetic sample on a mesoscopic level.}
\label{fig:samplegeometry}
\end{figure}
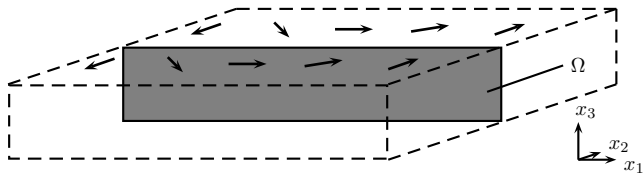

We focus on the following micromagnetic energy functional depending on a small parameter~$\eta$:\footnote{We refer to Section~\ref{sec:physics} for more information on $E_\eta$ and the parameters $\eta$ and $\lambda$.}
\begin{align}\label{eq:mm2deta}
  E_\eta(m) = \int_\Omega \lvert \nabla m \rvert^2 dx + \lambda \ln\tfrac{1}{\eta} \int_{\R[2]} \lvert h(m) \rvert^2 dx + \eta \int_\Omega ( m_1 - \cos \alpha)^2 + m_3^2 \, dx,\quad \eta\in(0,1),
\end{align}
subject to the boundary conditions \eqref{eq:bcinfty}, where $\lambda>0$ is a fixed constant and $h=h(m)\colon\R[2]\to \R[2]$ stands for the unique $L^2$ stray-field restricted to the $x_1x_3$-plane that is generated by the static Maxwell equations:\footnote{Existence and uniqueness of the stray field are a direct consequence of the Riesz representation theorem in the Hilbert space $V=\left\{v\in L_\text{loc}^2(\R[2]) \with \nabla v\in L^2, \, \dashint_{B(0,1)} v\, dx=0\right\}$ endowed with the norm $\|\nabla v\|_{L^2}$: Indeed, by \eqref{eq:bcinfty}, the functional $v\mapsto \int_\Omega \big(m'-(\cos \alpha, 0)\big)\cdot \nabla v\, dx$ is linear continuous on $V$ so that there exists a unique solution $h=-\nabla u$ with $u\in V$ of \eqref{eq:maxwell_2D} written in the weak form $\int_{\R[2]} \nabla u\cdot \nabla v\, dx=\int_\Omega m'\cdot \nabla v\, dx$ for every $v\in C^\infty_c(\R[2])$.}
\begin{align}\label{eq:maxwell_2D}
  \left\{\begin{aligned}\nabla\cdot(h+m'\mathbf{1}_\Omega) &= 0 \quad\text{in }\mathcal{D}'(\R[2]),\\
  \nabla \times h &= 0 \quad\text{in }\mathcal{D}'(\R[2]). \end{aligned}\right.
\end{align}
The first term of \eqref{eq:mm2deta} 
is called the ``exchange energy'', favoring a constant magnetization. The second term (called ``stray-field energy'') 
can be written as the $\dot{H}^{-1}(\R[2])$-norm of the $2D$ divergence of $m'$ (where $m$ is always extended by $0$ outside of $\Omega$):
$$\int_{\R[2]} \lvert h(m) \rvert^2 dx=\|\nabla \cdot (m' \mathbf{1}_\Omega)\|_{\dot{H}^{-1}(\R[2])}^2:=\sup \left\{\int_\Omega m'\cdot \nabla v\, dx \with v\in C^\infty_c(\R[2]), \,  \|\nabla v\|_{L^2(\R[2])}\leq 1\right\}.$$ The last term in \eqref{eq:mm2deta} (a combination of material anisotropy and external magnetic field) forces the magnetization to favor the ``easy axis'' 
$m^\pm_\alpha$ and serves as confining mechanism for the tails of the transition layer. We refer to Section~\ref{sec:physics} for more physical details about this model.

We are interested in the asymptotic behavior of minimizers $m_\eta$ of $E_\eta$ with the boundary condition \eqref{eq:bcinfty} as $\eta\tod 0$.
The main feature of this variational principle is
the non-convex constraint on the magnetization ($|m_\eta|=1$) and the non-local structure of the energy (due to the stray field $h(m_\eta)$). The competition between the three terms of the energy together with the boundary constraint \eqref{eq:bcinfty} induces an optimal transition layer that exhibits two length scales (cf. Figure~\ref{asym}):

\begin{itemize}
  \item an asymmetric core of size $\big(|x_1|\lesssim 1\big)$ (up to a logarithmic scale in $\eta$) where the magnetization $m_\eta$ is asymptotically divergence-free (so, generating 
no stray field) and hence the leading order term in $E_\eta$ is
given by the exchange energy; in this region, $m_\eta$ describes a transition path on $\mathbb{S}^2$ between the two directions $m^\pm_\theta$ determined by some angle $\theta$. 
  \item two symmetric tails of size $\big(1\lesssim |x_1|\lesssim \frac 1 \eta \big)$ (up to a logarithmic scale in $\eta$) where $m_\eta$ asymptotically behaves as a symmetric N\'eel wall:
    a one-dimensional (i.e., $m_\eta=m_\eta(x_1)$) rotation on $\mathbb{S}^1:=\mathbb{S}^1\times \{0\}\subset \mathbb{S}^2$ between the angles $\theta$ and $\alpha$ (on the left and right sides of the core). Here, the 
formation of the wall profile is driven by the stray-field energy that induces a logarithmic decay of $m_{1, \eta}$ on these two tails.
\end{itemize}

The constant $\lambda>0$ and the wall angle $\alpha$ play a crucial role in the behavior of a minimizer $m_\eta$. In fact, for either $\alpha \ll 1$, or $\alpha\in(0,\tfrac{\pi}{2}]$ arbitrary but $\lambda$ small, a minimizer is expected to be asymptotically symmetric (i.e., $m_\eta=m_\eta(x_1)$) as $\eta \tod 0$. However, for sufficiently large $\lambda$, 
there exists a critical wall angle $\alpha^*$ where a bifurcation occurs:
%an asymmetric wall
It becomes favorable to nucleate an asymmetric domain wall in the core of the transition layer.

In \cite[Section 3.6.4 (E)]{hubertschaefer98}, Hubert and Sch\"afer state:
\begin{quote}
  ``The magnetization of an asymmetric N\'eel wall points in the same direction at both surfaces, which is [\ldots] favourable for an applied field along this direction. This property is also the reason why the wall can gain some energy by splitting off an extended tail, reducing the core energy in the field generated by the tail. [\ldots] The tail part of the wall profile increases in relative importance with an applied field, so that less of the vortex structure becomes visible with decreasing wall angle. At a critical value of the applied field the asymmetric disappears in favour of a symmetric N\'eel wall structure.''
\end{quote}

To justify this physical prediction, we will establish the asymptotic behavior of $\{E_\eta\}_{\eta \downarrow 0}$ through the method of $\Gamma$-convergence. The limiting reduced model does then show that the minimal energy splits into the separate contributions from the symmetric and asymmetric regions of the transition layer. This makes it possible to infer information on the size of the regions and the conjectured bifurcation from symmetric to asymmetric walls. For details, we refer to Section~\ref{outlook}.

\bigskip

\subsection{Results}\label{sec:results}
Let $\alpha\in (0, \frac \pi 2]$ and $\eta\in(0,1)$. Observe that for $m \colon \Omega \to \mathbb{S}^2$, finite energy $E_\eta(m)<\infty$ is equivalent to $m\in\dot{H}^1(\Omega,\mathbb{S}^2)$ and
$m'(\pm\infty,\cdot)\stackrel{\eqref{convent}}{=}(\cos \alpha, 0)$ (which in particular implies $\lvert m_2 \rvert(\pm\infty,\cdot)=\sin \alpha$, see Lemma \ref{lem:modml2}). In the following we focus on the set of magnetizations of wall angle $\alpha\in(0,\frac{\pi}{2}]$ with
a transition imposed by \eqref{eq:bcinfty}:
\begin{align}\label{eq:defxalpha}
  X^\alpha &:= \left\{ m \in \dot{H}^1(\Omega,\mathbb{S}^2) \with m(\pm\infty,\cdot)=m^\pm_\alpha \right\}.
\end{align}
Our main result consists in proving $\Gamma$-convergence of $\{ E_\eta\}_{\eta\downarrow 0}$, defined on $X^\alpha\subset \dot{H}^1(\Omega,\mathbb{S}^2)$, in the weak $\dot{H}^1$-topology 
to the $\Gamma$-limit functional
\begin{align}\label{eq:defe0}
  E_0(m) =
    \int_\Omega \lvert \nabla m \rvert^2 dx + 2\pi\,\lambda\,\bigl( \cos\theta_m - \cos\alpha \bigr)^2,
\end{align}
which is defined on a space $X_0\subset\dot{H}^1(\Omega,\mathbb{S}^2)$:

In order to give the definitions of $X_0$ (see \eqref{eq:defx0}) and the angle $\theta_m$ associated to $m\in X_0$ (see \eqref{eq:defthetam}), we need some preliminary remarks. First, due to the logarithmic penalization of the stray field in \eqref{eq:mm2deta} as $\eta \tod 0$, limiting configurations of a family $\{m_\eta\}_{\eta\downarrow 0}$ of uniformly bounded energy $E_\eta(m_\eta)\leq C$ (e.g., minimizers of $E_\eta$) are stray-field free. Second, note that for any $m\in\dot{H}^1(\Omega,\mathbb{S}^2)$ with $\nabla \cdot (m' \mathbf{1}_\Omega) = 0$ in $\mathcal{D}'(\R[2])$ (i.e., $\nabla \cdot m' =0$ in $\Omega$ and $m_3=0$ on $\partial\Omega$) there is a unique constant angle $\theta_m\in[0,\pi]$ such that
\begin{align}\label{eq:defthetam}
  \bar{m}_1(x_1) := \dashint_{-1}^{1} \! m_1(x_1,x_3) \, dx_3 = \cos\theta_m \quad \text{for all }x_1\in\R.
\end{align}
Observe that such vector fields have the property $m'(\pm\infty,\cdot)=(\cos \theta_m, 0)$ in the sense of \eqref{convent} (see \eqref{poinc1} and \eqref{poinc3} below) and moreover, $\lvert m_2\rvert(\pm\infty,\cdot)=\sin {\theta_m}$ (see Lemma \ref{lem:modml2} if $\theta_m\in (0, \pi)$, and Remark \ref{rem_zero} below if $\theta_m\in \{0, \pi\}$). We define $X_0$ as the non-empty (see Appendix) set of such configurations $m$ that additionally
change sign as $\lvert x_1 \rvert \to \infty$, namely $m_2(\pm\infty,\cdot)=\pm \sin {\theta_m}$ in the sense of \eqref{convent}: 
\begin{align}\label{eq:defx0}
  X_0 &:= \left\{ m\in \dot{H}^1(\Omega, \mathbb{S}^2) \with \,\nabla\cdot m' =0 \text{ in }\Omega, \,m_3=0\text{ on }\partial\Omega,\,m(\pm\infty,\cdot)=m^\pm_{\theta_m} \right\}.
\end{align}
Note, however, that due to vanishing control of the anisotropy energy as $\eta\tod 0$, a limiting configuration $m$ in general satisfies \eqref{eq:bcinfty} for an angle $\theta_m$ that differs from $\alpha$. 

\bigskip

\begin{rem}
\label{rem_zero}
Observe that if $\theta_m\in \{0,\pi\}$ for $m\in \dot{H}^1(\Omega,\mathbb{S}^2)$ with $\nabla\cdot (m'\mathbf{1}_\Omega) =0$ in $\mathcal{D}'(\R[2])$ -- in particular if $m\in X_0$ --, we have $m\in \{\pm {\bf e}_1\}$: Indeed, since $\lvert\bar{m}_1\rvert\equiv 1$ in $\R$ and $\lvert m \rvert=1$ in $\Omega$, we deduce $\lvert m_1 \rvert \equiv 1$ and $m_2\equiv m_3\equiv 0$ in $\Omega$.
\end{rem}

\bigskip

We further remark that the first term in the $\Gamma$-limit energy \eqref{eq:defe0} accounts for the exchange energy of the asymmetric core of a transition layer $m_\eta$ as $\eta\tod 0$, while
the second term in $E_0$ accounts for the contribution coming from stray field/anisotropy energy through extended (symmetric) tails of the wall configurations at positive $\eta$.

Our $\Gamma$-convergence result is established in three steps. We start with compactness results. The main difficulty comes from the boundary conditions \eqref{eq:bcinfty}, which are 
in general not carried over by
weak limits of magnetization configurations with uniformly bounded exchange energy. However, since the energy $E_\eta$ is invariant under translations in $x_1$-direction, and due to the constraint \eqref{eq:bcinfty} in $X^\alpha$, a change of sign in $m_2$ can be preserved as $\eta\tod 0$ by a suitable translation in $x_1$.

\medskip
\begin{prop}[Compactness]\label{prop:compactness1}
  Let $\alpha\in (0, \frac \pi 2]$. The following convergence results hold \textbf{up to a subsequence and translations in the $x_1$-variable}:
\begin{enumerate}
  \item Let $\{m_\eta\}_{\eta\downarrow 0}\subset X^\alpha$ with uniformly bounded energy, i.e., 
  $\sup_{\eta \downarrow 0} E_\eta(m_\eta)<\infty$. Then $m_\eta \wto m$ weakly in $\dot{H}^1(\Omega)$ for some $m \in X_0$.
\item Let $\{m_k\}_{k\uparrow \infty}\subset X^\alpha$ with uniformly bounded energy $E_\eta$ for $\eta\in(0,1)$ fixed, i.e., $\sup_k E_\eta(m_k)<\infty$. Then $m_k\wto m$ weakly in $\dot{H}^1(\Omega)$ for some $m \in X^\alpha$. Moreover, the corresponding stray fields $\{h(m_k)\}_{k\uparrow\infty}$ converge weakly in $L^2(\R[2])$, i.e., $h(m_k)\wto h(m)$ in $L^2(\R[2])$.
\item Let $\{m_k\}_{k\uparrow\infty}\subset X_0$ with uniformly bounded exchange energy, i.e., $\sup_k \int_\Omega |\nabla m_k|^2\, dx<\infty$, such that the angles $\theta_k := \theta_{m_k}$ associated to $m_k$ in \eqref{eq:defthetam} satisfy $\theta_k\in [0, \pi]$. Then $\theta_k \to \theta$ for some angle $\theta \in [0,\pi]$ and $m_k \wto m$ weakly in $\dot{H}^1(\Omega)$ for some $m\in X_0$ with $\theta_m=\theta$ (i.e., $m\in X_0\cap X^\theta$).
\end{enumerate}
\end{prop}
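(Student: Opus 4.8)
\emph{Proof proposal.} All three parts rest on the same skeleton. \emph{Weak compactness.} In each case the exchange energy $\int_\Omega|\nabla m|^2\,dx$ is uniformly bounded, so since $|m|\equiv 1$ the configurations are bounded in $L^\infty\cap\dot H^1(\Omega)$; Rellich on $\Omega\cap B_R$ together with a diagonal argument yields a subsequence with $m_\eta\wto m$ in $\dot H^1(\Omega)$, $m_\eta\to m$ in $L^2_\mathrm{loc}(\Omega)$ and a.e., and in particular $m\in\dot H^1(\Omega,\mathbb S^2)$. I would also record that the fibre averages $\bar m_{1,\eta}$ and $\bar m_{2,\eta}$ are bounded by $1$ and uniformly $\tfrac12$-H\"older on $\R$ --- by Cauchy--Schwarz in $x_1$ against the exchange bound --- hence converge locally uniformly along the subsequence. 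This is the tool that lets me read off the values at $x_1=\pm\infty$ in the limit.

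For (i), boundedness of $E_\eta(m_\eta)$ gives $\int_{\R[2]}|h(m_\eta)|^2\,dx\le C/(\lambda\ln\tfrac1\eta)\to0$, so $h(m_\eta)\to0$ in $L^2(\R[2])$; writing $h(m_\eta)=-\nabla u_\eta$ and using $m'_\eta\mathbf 1_\Omega\to m'\mathbf 1_\Omega$ in $L^2_\mathrm{loc}(\R[2])$, passing to the limit in $\int\nabla u_\eta\cdot\nabla v=\int_\Omega m'_\eta\cdot\nabla v$ ($v\in C^\infty_c(\R[2])$) yields $\int_\Omega m'\cdot\nabla v=0$, i.e.\ $\nabla\cdot(m'\mathbf 1_\Omega)=0$ in $\mathcal D'(\R[2])$, equivalently $\nabla\cdot m'=0$ in $\Omega$ and $m_3=0$ on $\partial\Omega$. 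For such $m$ the average $\bar m_1$ is the constant $\cos\theta_m$, the Poincar\'e-type estimates recalled just before \eqref{eq:defx0} give $m'(\pm\infty,\cdot)=(\cos\theta_m,0)$ in the sense of \eqref{convent}, and Lemma~\ref{lem:modml2} (or Remark~\ref{rem_zero} when $\theta_m\in\{0,\pi\}$, in which case $m=\pm\mathbf e_1\in X_0$ and we are done) gives $|m_2|(\pm\infty,\cdot)=\sin\theta_m$. It remains to fix the \emph{sign}: as $m_\eta\in X^\alpha$, $\bar m_{2,\eta}$ runs continuously from $-\sin\alpha$ to $+\sin\alpha$, so I translate $m_\eta$ so that $x_1=0$ is the first zero of $\bar m_{2,\eta}$; then $\bar m_{2,\eta}<0$ on $(-\infty,0)$, and local uniform convergence gives $\bar m_2(0)=0$ and $\bar m_2\le 0$ on $(-\infty,0)$, hence $m_2(-\infty,\cdot)=-\sin\theta_m$. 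The matching statement $m_2(+\infty,\cdot)=+\sin\theta_m$ is the heart of the matter: it is equivalent to excluding that the net transition of $\{m_\eta\}$ either delocalizes to scale $\to\infty$ (``vanishing'') or splits off a piece escaping to $x_1=+\infty$ (``dichotomy''). I would rule both out by a concentration-compactness argument built on the observation that a transition between $m^-_\alpha$ and $m^+_\alpha$ cannot have both vanishing exchange and vanishing stray-field energy: a delocalized, essentially one-dimensional transition carries stray-field energy bounded below by a positive constant --- a short Fourier computation of the $\dot H^{-1}(\R[2])$-norm of $\partial_{x_1}\big(g(x_1)\mathbf 1_{(-1,1)}(x_3)\big)$ --- so that the diverging weight $\lambda\ln\tfrac1\eta$ would force $E_\eta(m_\eta)\to\infty$, while a concentrated transition costs a fixed amount of exchange energy. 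After the above translation this confines the transition to a bounded $x_1$-window, so it survives weak convergence and $m\in X_0$.

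For (ii) the weight $\lambda\ln\tfrac1\eta$ is a fixed positive constant, so in addition to the exchange bound the finiteness of $\sup_k E_\eta(m_k)$ controls $\int_\Omega(m_{1,k}-\cos\alpha)^2+m_{3,k}^2\le C/\eta$ and $\int_{\R[2]}|h(m_k)|^2\le C$ uniformly in $k$. The first gives, in the limit, $\int_\Omega(m_1-\cos\alpha)^2+m_3^2<\infty$, hence $m'(\pm\infty,\cdot)=(\cos\alpha,0)$ and $|m_2|(\pm\infty,\cdot)=\sin\alpha$ by \eqref{convent} and Lemma~\ref{lem:modml2}; the sign of $m_2$ at $\pm\infty$ is then pinned exactly as in (i) --- translation following the transition, exclusion of vanishing/dichotomy by coercivity of $E_\eta$ for fixed $\eta$ --- so $m\in X^\alpha$. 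For the stray fields, along a further subsequence $h(m_k)\wto-\nabla u$ in $L^2(\R[2])$; since $m'_k-(\cos\alpha,0)\wto m'-(\cos\alpha,0)$ in $L^2(\Omega)$ (uniform $L^2$-bound plus a.e.\ convergence), passing to the limit in the weak Maxwell equation identifies this limit as $h(m)$. For (iii), compactness of $[0,\pi]$ gives $\theta_k\to\theta$ along a subsequence, and since $\bar m_{1,k}\equiv\cos\theta_k$ converges locally uniformly we get $\bar m_1\equiv\cos\theta$, i.e.\ $\theta_m=\theta$; the conditions $\nabla\cdot m'=0$ in $\Omega$ and $m_3=0$ on $\partial\Omega$ pass to the limit directly from $\int_\Omega m'_k\cdot\nabla v=0$; and $m_2(\pm\infty,\cdot)=\pm\sin\theta$, hence $m\in X_0\cap X^\theta$, follows by the same translation argument as in (i) --- now with \emph{no} stray-field term, so the exclusion of vanishing instead uses that a divergence-free transition in $X_0\cap X^\theta$ with $\theta\in(0,\pi)$ has exchange energy bounded below by a positive constant (the asymmetric-core energy studied in \cite{dioasymwalls12}), which again confines the transition; if $\theta\in\{0,\pi\}$ then $m=\pm\mathbf e_1\in X_0$ by Remark~\ref{rem_zero}.

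The main obstacle is this last point in each part: showing that the sign change of $m_2$ at $x_1=\pm\infty$ --- equivalently, that the weak limit is a genuine transition rather than a (partly) delocalized one --- is preserved by a single $x_1$-translation. This is exactly where the concentration-compactness structure and the coercivity jointly supplied by the exchange energy and (for $\eta>0$) the logarithmically weighted stray-field energy are indispensable; the remaining identifications are routine weak-convergence arguments.
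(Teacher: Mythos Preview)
Your skeleton is right --- weak $\dot H^1$-compactness, passage to the limit in the Maxwell equation, identification of $\theta_m$ via $\bar m_1$, and the use of Lemma~\ref{lem:modml2} for the boundary values --- and it matches the paper's Proposition~\ref{prop:compactness2}. The genuine gap is precisely where you locate it yourself: securing the \emph{sign} of $m_2$ at $+\infty$ after translation.

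Translating to the \emph{first} zero of $\bar m_{2,\eta}$ does give $\bar m_2\le 0$ on $(-\infty,0)$, but your concentration-compactness remedy for the other side does not work for \emph{general} bounded-energy sequences. Here is a concrete obstruction in case~(i): take $m_\eta$ to be three stray-field free walls in series, $-\alpha\!\to\!\alpha$ near $x_1=0$, then $\alpha\!\to\!-\alpha$ near $x_1=3$, then $-\alpha\!\to\!\alpha$ near $x_1=k_\eta\to\infty$. Each wall has finite exchange energy, zero stray-field energy, and compactly supported anisotropy density, so $\sup_\eta E_\eta(m_\eta)<\infty$. The first zero of $\bar m_{2,\eta}$ sits near $0$; centring there, the local uniform limit of $\bar m_{2,\eta}$ has $\bar m_2(+\infty)=-\sin\alpha<0$, so your translated limit is \emph{not} in $X_0$. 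Neither ``vanishing'' (the stray-field energy already tends to $0$ by assumption) nor ``dichotomy costs exchange'' (each extra wall costs only a fixed finite amount, and the total exchange is merely bounded, not minimal) excludes this. The same example, with all three walls divergence-free, defeats the argument in~(iii); invoking an asymmetric-core lower bound from \cite{dioasymwalls12} does not help because the bound only says each wall costs a positive amount, not that several walls are forbidden.

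The paper avoids this entirely by \emph{not} insisting on the first zero. Lemma~\ref{lem:sgnchg} is a purely one-dimensional statement: given $u_k\in\dot H^1(\R)$ with $u_k(\pm\infty)\gtrless 0$, \emph{some} zero $z_k$ (not necessarily the leftmost) can be chosen so that the local-uniform limit of $u_k(\cdot+z_k)$ satisfies $\limsup_{s\to-\infty}u\le 0\le\liminf_{s\to+\infty}u$. The proof is a contradiction argument: if every choice of zeros fails, one iteratively produces, along nested subsequences, arbitrarily many zeros with pairwise distances tending to infinity; the bounded $\dot H^1$-norm then forces a diagonal limit $u\equiv 0$, which \emph{does} satisfy the sign condition. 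No energy coercivity beyond the $\dot H^1$-bound is used. In the example above, Lemma~\ref{lem:sgnchg} would select the \emph{rightmost} zero near $x_1=k_\eta$, after which the limit is a genuine $-\alpha\!\to\!\alpha$ transition. Once you replace your first-zero translation by an appeal to Lemma~\ref{lem:sgnchg} applied to $u_k=\bar m_{2,k}$, the rest of your argument goes through unchanged.
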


The main ingredient in Proposition \ref{prop:compactness1} is the following concentration-compactness type lemma related to the change of sign at $\pm \infty$:

\medskip

\begin{lem}\label{lem:sgnchg}
Let $u_k \colon \R \to \R$, $k\in\N$, be continuous and satisfy the following conditions:
\begin{gather}
  \limsup_{k\tou\infty} \int_{\R} \lvert \dds u_k(s) \rvert^2 ds < \infty,\label{eq:nubdd}\\
  \limsup_{s\tod -\infty} u_k(s)<0 \text{ and } \liminf_{s\tou\infty} u_k(s) > 0 \, \, \textrm{for every } k\in \N, \label{eq:sgnuinfty}
\end{gather}
where we denote by $\dds u_k$ the distributional derivative of the function $u_k$.

Then for each $k\in \N$, there exists a zero $z_k$ of $u_k$ and a limit $u \in \dot{H}^1(\R)$ such that $u(0)=0$,
\begin{align*}
  u_k(\cdot + z_k) \to u \quad \text{locally uniformly in $\R$ and weakly in $\dot{H}^1(\R)$ for a subsequence}
\end{align*} 
and
\begin{align}\label{eq:signu}
  \limsup_{s\tod-\infty} u(s) \leq 0 \quad \text{as well as} \quad \liminf_{s\tou\infty} u(s) \geq 0.
\end{align}
\end{lem}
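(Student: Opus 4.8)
\emph{Strategy.} I would (i) translate a zero of $u_k$ to the origin, (ii) extract a locally uniformly convergent subsequence by Arzel\`a--Ascoli, using that \eqref{eq:nubdd} makes the translated functions equicontinuous, (iii) pass also to a weak $\dot H^1$ limit, and (iv) transfer the one-sided sign conditions \eqref{eq:sgnuinfty} to the limit. Only step (iv) is substantial.

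For (i)--(iii): by \eqref{eq:sgnuinfty} the set $\{u_k<0\}$ is non-empty and bounded above, so $z_k:=\sup\{s\in\RR:u_k(s)<0\}$ is finite, and by continuity $u_k(z_k)=0$ with $u_k\ge 0$ on $[z_k,\infty)$; set $v_k:=u_k(\cdot+z_k)$, so that $v_k(0)=0$, $v_k\ge 0$ on $[0,\infty)$, and $v_k<0$ near $-\infty$. By \eqref{eq:nubdd} and Cauchy--Schwarz, $|v_k(s)-v_k(t)|\le C^{1/2}|s-t|^{1/2}$ for $k$ large, so with $v_k(0)=0$ the family $\{v_k\}$ is locally bounded and equicontinuous; Arzel\`a--Ascoli together with a diagonal argument over the intervals $[-N,N]$ yields a subsequence with $v_k\to u$ locally uniformly for some continuous $u$, and $u(0)=0$. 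As $\|\dds v_k\|_{L^2(\RR)}$ stays bounded, a further subsequence has $\dds v_k\wto g$ in $L^2(\RR)$; pairing with $\varphi\in C^\infty_c(\RR)$ and using $\int v_k\,\dds\varphi\to\int u\,\dds\varphi$ identifies $g=\dds u$, hence $u\in\dot H^1(\RR)$ and $v_k\wto u$ in $\dot H^1(\RR)$. Passing to the limit in $v_k\ge 0$ on $[0,\infty)$ gives $u\ge 0$ there, so $\liminf_{s\tou\infty}u(s)\ge 0$, the second inequality of \eqref{eq:signu}.

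It remains to prove $\limsup_{s\tod-\infty}u(s)\le 0$, \textbf{which is the main obstacle}. This cannot be read off from ``$v_k<0$ near $-\infty$'', since for each $k$ that negativity holds only on an interval that may recede to $-\infty$ as $k\tou\infty$; nor from $u\in\dot H^1(\RR)$, which tolerates sustained oscillations at $-\infty$. The fix has to use the \emph{uniform} bound \eqref{eq:nubdd} quantitatively, through the ``cost of a transition'' $(\Delta u)^2\le(\text{length})\cdot\|\dds u\|_{L^2}^2$: this forbids an oscillatory escape of the positive part of $v_k$ to $-\infty$ on bounded length scales, and so constrains which zero $z_k$ should be chosen. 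I would set up a concentration-compactness alternative on the zeros of $u_k$. Either some zero of $u_k$ lies inside arbitrarily long intervals of arbitrarily small exchange energy of $u_k$ --- then, translating such a zero to the origin, $v_k$ is nearly constant on every fixed window, hence $u\equiv 0$ in the limit and \eqref{eq:signu} holds trivially; or else, by \eqref{eq:nubdd}, the zeros of $u_k$ are contained, up to bounded distances, in finitely many bounded ``high-energy clusters'', and one re-selects $z_k$ as a suitable boundary zero of the leftmost cluster --- one with $u_k\le 0$ on $(-\infty,z_k]$ and with the support of $(u_k)_+$ starting only a bounded distance to its right --- so that in the limit $v_k\le 0$ on $(-\infty,0]$, giving $\limsup_{s\tod-\infty}u(s)\le 0$; the boundedness of the clusters then keeps this choice compatible with $\liminf_{s\tou\infty}u(s)\ge 0$. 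Making this dichotomy precise --- quantifying ``long interval of small energy'' and extracting the clusters --- is the technical heart of the proof; the rest is the routine Arzel\`a--Ascoli plus weak-$\dot H^1$ extraction above.
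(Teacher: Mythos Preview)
Your steps (i)--(iii) are correct and match the paper's Step~1. The substance is indeed step (iv), and here your proposed fix has a genuine gap: the ``leftmost-cluster'' recipe need not work. Take
\[
u_k(s)=\begin{cases}-1,&s\le -k-1,\\+1,&-k\le s\le -k+1,\\-\tfrac12,&-k+2\le s\le 0,\\+1,&s\ge 1,\end{cases}
\]
with linear interpolation in the gaps; this satisfies \eqref{eq:nubdd} and \eqref{eq:sgnuinfty}. The only zero with $u_k\le 0$ to its left lies near $-k-\tfrac12$; translating it to the origin yields a limit $u$ with $u\equiv -\tfrac12$ on $(3,\infty)$, so $\liminf_{s\to\infty}u(s)<0$. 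The two zero-clusters here (near $-k$ and near $0$) each have bounded diameter but their separation diverges, so ``boundedness of the clusters'' does not rescue the $+\infty$ condition. In this example the zero near $0$ happens to work --- but a mirrored example breaks that choice instead; there is no single positional rule, and your dichotomy as stated does not select the right cluster.

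The paper's route is structurally different. Rather than constructing the correct $z_k$ directly, it argues by contradiction: assuming \emph{no} sequence of zeros yields a limit satisfying \eqref{eq:signu}, it iteratively extracts, for every $l$, zeros $z_k^1,\dots,z_k^l$ (along nested subsequences) with pairwise distances tending to $\infty$. Each limit $u^j$ fails \eqref{eq:signu}, hence $u_k$ has the wrong sign at some $\pm s_j+z_k^j$, which together with \eqref{eq:sgnuinfty} forces a new zero outside $\bigcup_j[z_k^j-s_j,z_k^j+s_j]$. The quantitative step is $\sum_{j\le l}|u^j(a)|^2/a\le\limsup_k\int_{\R}|\dds u_k|^2<\infty$ for each fixed $a$, obtained by summing the H\"older estimate over the now disjoint intervals $[z_k^j,z_k^j+a]$; letting $l\to\infty$ forces $u^l(a)\to 0$ along a subsequence, and a diagonal extraction produces a limit $u\equiv 0$, which trivially satisfies \eqref{eq:signu} --- contradiction. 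Your ``vanishing versus finitely many clusters'' intuition is morally right, but it is the iterative contradiction, not a direct selection rule, that makes it rigorous.
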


The second step consists in proving the following lower bound:

\medskip
\begin{thm}[Lower bound]\label{thm:gammalimlb}
  Let $\alpha\in(0,\frac{\pi}{2}]$. For $m \in X_0$ and any family $\{m_\eta\}_{\eta\tod 0} \subset X^\alpha$ with $m_\eta \wto m$ in $\dot{H}^1(\Omega)$ as $\eta\tod 0$, the following lower bound holds:
  \begin{align}\label{eq:lowerbd}
    \liminf_{\eta\tod 0} E_\eta(m_\eta) \geq E_0(m).
  \end{align}
\end{thm}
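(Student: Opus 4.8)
The two terms of $E_0(m)$ are produced by two independent mechanisms: the exchange term is inherited from weak lower semicontinuity of $m\mapsto\int_\Omega|\nabla m|^2$, and the number $2\pi\lambda(\cos\theta_m-\cos\alpha)^2$ is the renormalized energy of the two symmetric N\'eel tails that must connect the value $\cos\theta_m$ taken by $\bar m_1$ for the limit $m$ to the boundary value $\cos\alpha$ imposed on $m_\eta$ at $x_1=\pm\infty$. Since \eqref{eq:lowerbd} is trivial unless $\liminf_{\eta\tod0}E_\eta(m_\eta)<\infty$, I may pass to a subsequence along which $E_\eta(m_\eta)\le C<\infty$ and the $\liminf$ is a limit; translations play no role, everything below being translation invariant in $x_1$. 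Fix a large $R>0$ and split $\int_\Omega|\nabla m_\eta|^2$ into the parts over $\{|x_1|<R\}$ and $\{|x_1|>R\}$; the first has $\liminf\ge\int_{\Omega\cap\{|x_1|<R\}}|\nabla m|^2$ by lower semicontinuity, and the second is kept in reserve. It then suffices to show that the sum of this second part, of $\lambda\ln\tfrac1\eta\int_{\R[2]}|h(m_\eta)|^2$, and of the anisotropy term has $\liminf\ge 2\pi\lambda(\cos\theta_m-\cos\alpha)^2-\omega(R)$ with $\omega(R)\to0$, and then to let $R\to\infty$.

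\textbf{The one-dimensional picture.} Put $\bar m_{1,\eta}(x_1):=\dashint_{-1}^1 m_{1,\eta}(x_1,x_3)\,dx_3$. By Jensen, $\|\bar m_{1,\eta}'\|_{L^2(\R)}^2\le\tfrac12\int_\Omega|\nabla m_{1,\eta}|^2$ and $\eta\,\|\bar m_{1,\eta}-\cos\alpha\|_{L^2(\R)}^2\le\tfrac12\,\eta\int_\Omega(m_{1,\eta}-\cos\alpha)^2$ stay bounded; from $m_\eta\in X^\alpha$ and \eqref{convent} one gets $\bar m_{1,\eta}(\pm\infty)=\cos\alpha$, while $m_\eta\wto m$ in $\dot H^1(\Omega)$, $\bar m_1\equiv\cos\theta_m$ by \eqref{eq:defthetam}, and Rellich give $\bar m_{1,\eta}\to\cos\theta_m$ locally uniformly on $\R$. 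The mechanism is the classical one for N\'eel tails: the reduced (thin-film) stray-field energy of a one-dimensional in-plane profile behaves, at the relevant low frequencies, like a fixed multiple of the $\dot H^{1/2}(\R)$-seminorm squared; a continuous transition of height $\delta:=|\cos\theta_m-\cos\alpha|$ that must run through $N$ dyadic scales in $x_1$ costs at least $\sim\delta^2/N$ in $\dot H^{1/2}$ (a Jensen/Cauchy--Schwarz estimate over dyadic annuli); the local uniform convergence forces this transition to start at a bounded scale, and the anisotropy term forces it to be essentially finished by scale $\sim1/\eta$, so that $N\le(1+o(1))\ln\tfrac1\eta$ per side; distributing the transition optimally over these scales yields exactly $\pi\lambda\delta^2$ per side, hence $2\pi\lambda\delta^2$ after multiplication by $\lambda\ln\tfrac1\eta$.

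\textbf{Passing to a lower bound.} I would implement this by duality: for every $w\in\dot H^1(\R[2])$, $\int_{\R[2]}|h(m_\eta)|^2\ge 2\int_\Omega m'_\eta\cdot\nabla w-\int_{\R[2]}|\nabla w|^2$, and one chooses $w=w_\eta$ to be the magnetostatic potential of the one-dimensional magnetization $(\psi_\eta(x_1),0)\,\mathbf 1_{(-1,1)}(x_3)$, where $\psi_\eta$ is an odd logarithmic cut-off performing, on each side, a transition of a prescribed small height through $\sim\ln\tfrac1\eta$ dyadic scales between $\sim R$ and $\sim1/\eta$. Such $w_\eta$ spreads proportionally in $x_3$, so $\|\nabla w_\eta\|_{L^2(\R[2])}^2$ is comparable to the reduced N\'eel energy $\sim1/\ln\tfrac1\eta$; the pairing $\int_\Omega m'_\eta\cdot\nabla w_\eta$ should then reduce, to leading logarithmic order, to the one-dimensional pairing of $\bar m_{1,\eta}$ against the reduced field of $\psi_\eta$, after which the scalar estimate of the previous paragraph together with an optimization over the height of $\psi_\eta$ close the argument. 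Passage to the $\liminf$ is automatic, as $\int|h(m_\eta)|^2$ is only ever bounded below by linear pairings with $\dot H^1(\R[2])$-norm $O\big(1/\sqrt{\ln(1/\eta)}\big)$.

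\textbf{Main obstacle.} The real work is precisely this two-dimensional-to-one-dimensional reduction \emph{with the sharp constant}. Under naive estimates the contributions to $\int_\Omega m'_\eta\cdot\nabla w_\eta$ coming from the surface charges $m_{3,\eta}|_{\partial\Omega}$, from $\partial_{x_3}m_{3,\eta}$, and from the $x_3$-oscillation $m_{1,\eta}-\bar m_{1,\eta}$ are of the same order as -- and a priori of the wrong sign relative to -- the main term, so one must use rather precisely that (i) a slowly varying one-dimensional charge generates a nearly $x_3$-independent and nearly horizontal field inside the thin strip, (ii) $m_{1,\eta}-\bar m_{1,\eta}$ is $x_3$-mean-free and hence controlled, via Poincar\'e in $x_3$, by the exchange energy, and (iii) $\psi_\eta$ can be localized away from the core $\{|x_1|\le R\}$, so that any leftover error is absorbed into the tail exchange $\int_{\Omega\cap\{|x_1|>R\}}|\nabla m_\eta|^2$ set aside at the start, at the price of the quantity $\omega(R)\to0$. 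Obtaining the optimal constant $2\pi$ rather than merely $\gtrsim$ then forces a matching lower bound over dyadic annuli, i.e. the explicit evaluation of the renormalized N\'eel-tail energy (``known explicitly''). A last book-keeping point: the value $\cos\theta_m$ is attained by $\bar m_{1,\eta}$ only in the limit, so the inner and outer cut-off scales of $\psi_\eta$ must be chosen $\eta$-dependently -- e.g. as the first scale at which $\bar m_{1,\eta}$ has moved by $\varepsilon$ away from $\cos\theta_m$, respectively has come within $\varepsilon$ of $\cos\alpha$ -- and the limits are taken in the order $\eta\tod0$, then $R\to\infty$, then $\varepsilon\tod0$.
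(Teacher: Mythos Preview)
Your duality idea is essentially the paper's, but you make life harder than necessary by the choice of $w_\eta$ and by the exchange-energy splitting. The paper takes the test function $\zeta$ to be \emph{independent of $x_3$ inside $\Omega$}: it is a fixed odd piecewise-affine $\zeta_0(x_1)$ with $\zeta_0'=1$ on $(-1,1)$, $\zeta_0\equiv\pm1$ on $[1,a]$ and $[-a,-1]$, and a linear decay to $0$ on $[a,2a]$, harmonically extended to $\R[2]\setminus\overline\Omega$. Because $\nabla\zeta=(\zeta_0'(x_1),0)$ in $\Omega$, one has
\[
\dashint_{-1}^1\!\dashint_{-1}^1(m_{1,\eta}-\cos\alpha)\,dx
=\tfrac14\int_\Omega m'_\eta\cdot\nabla\zeta\,dx+\tfrac1{4a}\int_{\{a<|x_1|<2a\}\times(-1,1)}(m_{1,\eta}-\cos\alpha)\,dx,
\]
and by Maxwell's equation the first term equals $-\tfrac14\int_{\R[2]}h(m_\eta)\cdot\nabla\zeta$. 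Cauchy--Schwarz then bounds the left side by $(\tfrac{1}{2\pi}\ln a+O(1))^{1/2}\|h(m_\eta)\|_{L^2}+(\tfrac1{4a})^{1/2}\|m_{1,\eta}-\cos\alpha\|_{L^2(\Omega)}$, where the factor $\tfrac{8}{\pi}\ln a+O(1)$ for $\|\nabla\zeta\|_{L^2(\R[2])}^2$ comes from a radial extension computing the $\dot H^{1/2}$-norm of $\zeta_0$. Choosing $a\sim\eta^{-1}$ and squaring gives exactly the $2\pi\lambda(\cos\theta_m-\cos\alpha)^2$ lower bound for the stray-field plus anisotropy terms; the left side converges to $\cos\theta_m-\cos\alpha$ by $L^2_{\mathrm{loc}}$ convergence on the \emph{fixed} square $(-1,1)^2$.

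This choice eliminates every obstacle you list: there is no $m_{3,\eta}$ contribution because $\partial_{x_3}\zeta=0$ in $\Omega$; there is no issue with $m_{1,\eta}-\bar m_{1,\eta}$ because only the $x_1$-average of $m_{1,\eta}$ enters; no tail exchange energy needs to be reserved (the full exchange is handled by weak lower semicontinuity alone, no cut at $R$); and the inner cut-off scale is simply $1$, fixed, not $\eta$-dependent. In short, your mechanism is right, but the paper's implementation is a single Cauchy--Schwarz against an explicit test function rather than an optimization-over-scales argument.
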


The last step consists in constructing recovery sequences for limiting configurations:

\medskip
\begin{thm}[Upper bound]\label{thm:gammlimub}
  For $\alpha\in(0,\frac{\pi}{2}]$ and every $m \in X_0$ there exists a family $\{m_\eta\}_{\eta\tod 0} \subset X^\alpha$ with $m_\eta \to m$ strongly in $\dot{H}^1(\Omega)$ and
  \begin{align}\label{eq:upperbd}
    \limsup_{\eta\tod 0} E_\eta(m_\eta) \leq E_0(m).
  \end{align}
\end{thm}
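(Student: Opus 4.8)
The plan is to build the recovery sequence by splicing, along the $x_1$-axis, the given stray-field-free \emph{core} $m$ on a bounded interval $\{|x_1|\le a_\eta\}$, a one-dimensional \emph{symmetric N\'eel-wall tail} on each of the intervals $\{a_\eta+1\le\pm x_1\le S_\eta\}$, the constant value $m^\pm_\alpha$ on $\{\pm x_1\ge S_\eta\}$, and two short gluing layers of length $O(1)$ in between; here $1\ll a_\eta\ll S_\eta\lesssim 1/\eta$ are $\eta$-dependent scales to be fixed only at the end. On the right tail we take the one-dimensional rotation $x_1\mapsto(\cos\varphi_\eta(x_1),\sin\varphi_\eta(x_1),0)\in\mathbb{S}^1$ with $\varphi_\eta(a_\eta+1)=\theta_m$ and $\varphi_\eta(S_\eta)=\alpha$, so that $m_{3,\eta}\equiv0$ there and the only magnetic charge is the volume charge $\partial_{x_1}m_{1,\eta}$; the left tail is its reflection under $x_1\mapsto-x_1$, $m_2\mapsto-m_2$. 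Since $m(+\infty,\cdot)=m^+_{\theta_m}$ in the sense of \eqref{convent}, the slice $m(a_\eta,\cdot)$ converges in $L^2(-1,1)$ to the constant $m^+_{\theta_m}$ as $a_\eta\to\infty$ — and $m^+_{\theta_m}$ is precisely the inner value of the tail — so on the layer $\{a_\eta\le x_1\le a_\eta+1\}$ one may linearly interpolate in $\R[3]$ between $m(a_\eta,\cdot)$ and $m^+_{\theta_m}$ and project radially onto $\mathbb{S}^2$ (the interpolant stays near the unit sphere), and similarly on the left; this produces $m_\eta\in\dot{H}^1(\Omega,\mathbb{S}^2)$ which, since $m_\eta\equiv m^\pm_\alpha$ for $\pm x_1\ge S_\eta$, lies in $X^\alpha$. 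The degenerate cases $\theta_m\in\{0,\pi\}$, where $m=\pm\mathbf{e}_1$ by Remark \ref{rem_zero}, are included with a constant core and trivial gluing layers, so that $m_\eta$ is then just a truncated symmetric N\'eel wall.

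For the profile $\varphi_\eta$ I would invoke the classical explicit construction of the \emph{optimal symmetric N\'eel-wall tail}: one-dimensional profiles, logarithmically spread over the range $1\lesssim|x_1|\lesssim1/\eta$, whose exchange energy $\int_{\R}|\varphi_\eta'|^2\,dx_1$ tends to $0$ and for which the stray-field-plus-anisotropy part of $E_\eta(m_\eta)$ converges to $2\pi\lambda\,(\cos\theta_m-\cos\alpha)^2$ as $\eta\tod 0$. The subtle point is that a single tail carries the net charge $\pm2(\cos\theta_m-\cos\alpha)\neq0$ and hence has infinite $\dot{H}^{-1}(\R[2])$-energy in isolation, so the stray-field term $\|\nabla\cdot(m_\eta'\mathbf{1}_\Omega)\|_{\dot{H}^{-1}(\R[2])}^2$ must be controlled for the full distribution, which is globally neutral and odd in $x_1$. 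Here one uses two facts: the core contributes nothing to this distribution, since on $\{|x_1|<a_\eta\}$ we have $\nabla\cdot m'=0$ in $\Omega$ and $m_3=0$ on $\partial\Omega$ for $m\in X_0$; and the charge generated by each gluing layer is \emph{mean-zero} — because $\bar{m}_1(a_\eta)=\cos\theta_m$ equals the tail's inner value, so the $x_3$-average of $m_{1,\eta}$ does not change across the layer, and $m_{3,\eta}$ can be kept $0$ on $\partial\Omega$ there — with $\dot{H}^{-1}(\R[2])$-energy $O(\delta(a_\eta)^2)$, where $\delta(a_\eta):=\|m-m^+_{\theta_m}\|_{H^1(\{a_\eta<x_1<a_\eta+1\})}\to0$. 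Hence the glue self-energy and the glue--tail interaction, after multiplication by $\ln\tfrac{1}{\eta}$, are $o(1)$ provided $a_\eta\to\infty$ slowly enough that $\delta(a_\eta)^2\ln\tfrac{1}{\eta}\to0$, while the anisotropy energy of the core and gluing layers is of order $\eta\,a_\eta\to0$. Combining this with the exchange identity $\int_\Omega|\nabla m_\eta|^2\,dx=\int_{\{|x_1|<a_\eta\}}|\nabla m|^2\,dx+(\text{glue})+(\text{tails})$, whose three terms tend to $\int_\Omega|\nabla m|^2\,dx$, $0$ and $0$ (the glue term because $\delta(a_\eta)\to0$, the tail term by the design of $\varphi_\eta$), one obtains $\limsup_{\eta\tod 0}E_\eta(m_\eta)\le\int_\Omega|\nabla m|^2\,dx+2\pi\lambda(\cos\theta_m-\cos\alpha)^2=E_0(m)$, which is \eqref{eq:upperbd}.

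Strong convergence $m_\eta\to m$ in $\dot{H}^1(\Omega)$ follows from the same splitting: $\nabla m_\eta=\nabla m$ on $\{|x_1|<a_\eta\}$, while on $\{|x_1|\ge a_\eta\}$ both $\|\nabla m\|_{L^2}$ (a tail of the convergent integral $\int_\Omega|\nabla m|^2\,dx$) and $\|\nabla m_\eta\|_{L^2}$ (glue and tail exchange, shown above to vanish) tend to $0$, and the cross term is controlled by Cauchy--Schwarz. Finally, every estimate above has the form ``the error tends to $0$ as $a_\eta\to\infty$, provided $a_\eta,S_\eta$ lie in an $\eta$-dependent window'', so a diagonal extraction fixes admissible sequences $a_\eta\to\infty$, $S_\eta\to\infty$ with $a_\eta/S_\eta\to0$ and $S_\eta\lesssim1/\eta$ along which all the limits hold simultaneously.

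I expect the stray-field bookkeeping of the second paragraph to be the main obstacle: one adds the $\ln\tfrac{1}{\eta}$-amplified, long-range $\dot{H}^{-1}(\R[2])$-energy of several individually non-neutral pieces and must isolate the tail contribution, showing that the glue--tail and glue--glue contributions, together with any error from truncating the infinite-extent core $m$ at $x_1=\pm a_\eta$, stay $o(1)$ after multiplication by $\ln\tfrac{1}{\eta}$. This is exactly where the explicit logarithmic structure of the N\'eel tail, the averaging identity for $\theta_m$, and the careful coupling of the scales $a_\eta$, $S_\eta$ and $1/\eta$ come in.
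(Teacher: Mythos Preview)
Your architecture --- core $m$ on $\{|x_1|<a_\eta\}$, an $O(1)$ gluing layer, logarithmic N\'eel tail, constant $m^\pm_\alpha$ outside --- is exactly the paper's, and your identification of the mean-zero charge of the gluing layer via $\bar m_1\equiv\cos\theta_m$ is the right observation. Two points, however, are genuine gaps.

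\textbf{Good-slice selection.} You assert that ``the slice $m(a_\eta,\cdot)$ converges in $L^2(-1,1)$ to $m^+_{\theta_m}$'' because $m(+\infty,\cdot)=m^+_{\theta_m}$ in the sense of \eqref{convent}. That convention only says $\int_{\Omega_+}|m-m^+_{\theta_m}|^2<\infty$; it forces no slice convergence along an arbitrary sequence $a_\eta\to\infty$. Worse, your linear-interpolate-and-project gluing needs much more than $L^2$ closeness of the slice: the $x_3$-derivative of the glued map carries $\partial_{x_3}m(a_\eta,\cdot)$, and for the radial projection $v\mapsto v/|v|$ to be well-defined with bounded derivative you need $m(a_\eta,\cdot)$ uniformly close to $m^+_{\theta_m}$ in $L^\infty(-1,1)$ (two antipodal points on $\mathbb S^2$ have a chord through the origin). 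None of this follows from your quantity $\delta(a_\eta)$, which is an $H^1$-norm over the \emph{strip} $\{a_\eta<x_1<a_\eta+1\}$, not a norm of the \emph{trace} at $x_1=a_\eta$; in two dimensions $H^1$ does not embed into $L^\infty$. The fix is a mean-value selection: since $\int_\Omega|\nabla m|^2<\infty$, for any $b_\eta\to\infty$ one can pick $a_\eta\in[b_\eta/2,b_\eta]$ with $\int_{-1}^1|\nabla m(\pm a_\eta,x_3)|^2\,dx_3\le C/b_\eta$. Combined with $\bar m_1(\pm a_\eta)=\cos\theta_m$ and $\bar m_2(\pm a_\eta)\to\pm\sin\theta_m$, one-dimensional Sobolev embedding then gives $\|m(\pm a_\eta,\cdot)-m^\pm_{\theta_m}\|_{L^\infty(-1,1)}\lesssim b_\eta^{-1/2}$. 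The paper takes $b_\eta=\ln^{3/2}\tfrac1\eta$, so that the gluing exchange energy $O(b_\eta^{-1})$ beats $\ln\tfrac1\eta$; with the $L^\infty$ closeness in hand it interpolates $m_3$ and the \emph{phase} of $(m_1,m_2)$ linearly, avoiding the projection altogether.

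\textbf{The case $\theta_m\in\{0,\pi\}$.} Here the logarithmic tail reaches $m_1=\pm1$ at its inner endpoint, so $1-m_1^2$ vanishes only to first order and the tail exchange energy $\int\tfrac{|m_1'|^2}{1-m_1^2}\,dx_1$ diverges. ``Just a truncated symmetric N\'eel wall'' therefore fails; one must regularize the profile on a bounded inner zone (the paper makes $m_1-\cos\theta_m$ quadratic in $x_1$ there) before switching to the logarithmic regime.

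For the stray-field term you flag as the main obstacle, the paper sidesteps your $\dot H^{-1}$ cross-term bookkeeping entirely: it writes $\int_{\R[2]}|\nabla u_\eta|^2=-\int_\Omega(\nabla\cdot m'_\eta)\,u_\eta$ and splits the \emph{right-hand} integral over $\Omega_A$, $\Omega_I$, $\Omega_T$. The core term vanishes; the gluing term is bounded by $(C/b_\eta)^{1/2}\|\nabla u_\eta\|_{L^2}$ via Poincar\'e--Wirtinger (using your mean-zero observation); the tail term by $\bigl(2\pi(\cos\theta_m-\cos\alpha)^2/\ln\tfrac1\eta\bigr)^{1/2}\|\nabla u_\eta\|_{L^2}$ via a radial $\dot H^{1/2}$-extension. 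Dividing through by $\|\nabla u_\eta\|_{L^2}$ yields the bound directly, with no interaction terms to chase.
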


As a consequence, one deduces the asymptotic behavior of the minimal energy $E_\eta$ over the space $X^\alpha$ as $\eta\downarrow 0$.

\medskip
\begin{cor}  
\label{cor1}
For $\alpha\in(0,\frac{\pi}{2}]$ and $\theta\in[0,\pi]$ we define
$$\EA(\theta)=\min_{\substack{m \in X_0\\\theta_m=\theta}} \int_\Omega \lvert \nabla m \rvert^2 dx$$
and
$$\ES(\alpha-\theta)=2\pi\, \bigl( \cos\theta-\cos\alpha\bigr)^2.$$
Then it holds
\begin{align}
\label{eq:reducedmodel}
 \lim_{\eta\downarrow 0} \min_{m_\eta\in X^\alpha} E_\eta(m_\eta)=\min_{m\in X_0} E_0(m) = \min_{\theta \in [0, \pi]} \Bigl( \EA(\theta) + \lambda\,\ES(\alpha-\theta)\Bigr).
\end{align}
In fact, the optimal angle $\theta$ is attained in $[0,\tfrac{\pi}{2}]$.
Moreover, every minimizing sequence $\{m_\eta\}_{\eta\downarrow 0}\subset X^\alpha$  of $\{E_\eta\}_{\eta\downarrow 0}$ in the sense of $E_\eta(m_\eta)\to \min_{X_0} E_0$ is relatively compact in the strong $\dot{H}^1(\Omega)$-topology, up to translations in $x_1$, having as accumulation points in $X_0$ minimizers of $E_0$.
\end{cor}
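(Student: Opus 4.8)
The plan is to derive Corollary~\ref{cor1} from the $\Gamma$-convergence package (Theorems~\ref{thm:gammalimlb} and~\ref{thm:gammlimub}) together with the compactness statements in Proposition~\ref{prop:compactness1}, following the standard fundamental-theorem-of-$\Gamma$-convergence scheme but paying attention to the fact that the topology is only weak $\dot H^1$ and that everything is modulo translations in $x_1$. First I would prove the chain of equalities in \eqref{eq:reducedmodel} from right to left. The innermost equality $\min_{m\in X_0} E_0(m) = \min_{\theta\in[0,\pi]}\bigl(\EA(\theta)+\lambda\ES(\alpha-\theta)\bigr)$ is essentially a matter of unwinding definitions: for fixed $m\in X_0$ one has $E_0(m) = \int_\Omega|\nabla m|^2 + \lambda\ES(\alpha-\theta_m)$, and splitting the minimization over $X_0$ into an outer minimization over $\theta\in[0,\pi]$ and an inner minimization over $\{m\in X_0: \theta_m=\theta\}$ gives exactly $\EA(\theta)+\lambda\ES(\alpha-\theta)$; one must check that $\EA(\theta)$ is actually attained, which follows from Proposition~\ref{prop:compactness1}(iii) applied to a minimizing sequence with $\theta_k\equiv\theta$ (the weak limit $m$ lies in $X_0$ with $\theta_m=\theta$, and $\int_\Omega|\nabla m|^2 \le \liminf \int_\Omega|\nabla m_k|^2$ by weak lower semicontinuity of the Dirichlet energy). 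The same lower-semicontinuity argument, together with a diagonal extraction, shows $\min_{m\in X_0}E_0(m)$ is attained.

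Next I would establish $\lim_{\eta\downarrow0}\min_{X^\alpha}E_\eta = \min_{X_0}E_0$ by the two inequalities. For ``$\le$'': pick a minimizer $m_*\in X_0$ of $E_0$, feed it into the upper bound Theorem~\ref{thm:gammlimub} to get a recovery sequence $\{m_\eta\}\subset X^\alpha$ with $\limsup_\eta E_\eta(m_\eta)\le E_0(m_*) = \min_{X_0}E_0$; since $m_\eta\in X^\alpha$, $\min_{X^\alpha}E_\eta\le E_\eta(m_\eta)$, and taking $\limsup$ gives $\limsup_\eta\min_{X^\alpha}E_\eta\le\min_{X_0}E_0$. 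For ``$\ge$'': let $\{m_\eta\}$ be a minimizing sequence, $E_\eta(m_\eta) = \min_{X^\alpha}E_\eta + o(1)$; the bound $\liminf_\eta\min_{X^\alpha}E_\eta<\infty$ (guaranteed by the ``$\le$'' half) lets us apply Proposition~\ref{prop:compactness1}(i) to extract a subsequence and translations with $m_\eta\wto m\in X_0$, and then the lower bound Theorem~\ref{thm:gammalimlb} gives $\liminf_\eta E_\eta(m_\eta)\ge E_0(m)\ge\min_{X_0}E_0$. (Translating $m_\eta$ in $x_1$ does not change $E_\eta$ by the translation invariance noted in the introduction, so this is harmless.) Combining the two inequalities along the full sequence via the usual subsequence argument yields the limit. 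One subtlety worth a line: $\min_{X^\alpha}E_\eta$ is attained for each fixed $\eta$, which follows from Proposition~\ref{prop:compactness1}(ii) (weak $\dot H^1$ compactness of energy-bounded sequences in $X^\alpha$ with weak convergence of stray fields) plus weak lower semicontinuity of each of the three terms of $E_\eta$ (Dirichlet term and anisotropy term by convexity and Fatou; stray-field term because $h(m_k)\wto h(m)$ in $L^2$, so $\|h(m)\|_{L^2}^2\le\liminf\|h(m_k)\|_{L^2}^2$).

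For the assertion that the optimal $\theta$ lies in $[0,\tfrac\pi2]$, I would argue by a reflection/symmetry reduction: $\ES(\alpha-\theta) = 2\pi(\cos\theta-\cos\alpha)^2$ with $\alpha\in(0,\tfrac\pi2]$ is, for $\theta\in[\tfrac\pi2,\pi]$, strictly larger than its value at $\pi-\theta\in[0,\tfrac\pi2]$ when $\cos\theta<0<\cos\alpha$; and $\EA$ is invariant under $\theta\mapsto\pi-\theta$ because the map $m'(x_1,x_3)\mapsto m'(x_1,x_3)$, $m_2\mapsto m_2$ composed with the sign flip $m_1\mapsto -m_1$ — more precisely the isometry of $\mathbb{S}^2$ sending $m=(m_1,m_2,m_3)$ to $(-m_1,m_2,m_3)$ — maps $X_0$ to itself, preserves $\nabla\cdot m'=0$, $m_3=0$ on $\partial\Omega$ and the Dirichlet energy, while sending $\theta_m$ to $\pi-\theta_m$. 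Hence $\EA(\theta)=\EA(\pi-\theta)$ and the objective at $\theta\in(\tfrac\pi2,\pi]$ strictly exceeds that at $\pi-\theta$, so no minimizer can have $\theta>\tfrac\pi2$ (and $\theta=\pi$ is likewise excluded, or handled by Remark~\ref{rem_zero} which forces $m=\pm e_1$ and hence a large $\ES$). Finally, for the compactness of minimizing sequences: given $\{m_\eta\}\subset X^\alpha$ with $E_\eta(m_\eta)\to\min_{X_0}E_0$, Proposition~\ref{prop:compactness1}(i) yields (up to subsequence and translations) $m_\eta\wto m\in X_0$, and $E_0(m)\le\liminf_\eta E_\eta(m_\eta) = \min_{X_0}E_0$ by Theorem~\ref{thm:gammalimlb}, forcing $m$ to be a minimizer of $E_0$. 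To upgrade to strong $\dot H^1$ convergence one shows $\int_\Omega|\nabla m_\eta|^2 \to \int_\Omega|\nabla m|^2$: the $\liminf$ is $\ge\int_\Omega|\nabla m|^2$ by weak lsc, and for the matching $\limsup$ one uses that the stray-field and anisotropy terms of $E_\eta(m_\eta)$ are nonnegative and that, in the limit, the whole energy equals $E_0(m) = \int_\Omega|\nabla m|^2 + \lambda\ES(\alpha-\theta_m)$ with the second term recovered exactly — so the exchange energy cannot carry any excess. Convergence of the norms plus weak convergence in the Hilbert space $\dot H^1(\Omega)$ gives strong convergence.

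I expect the main obstacle to be the honest bookkeeping in the strong-convergence claim: one needs to know that in the limit the stray-field-plus-anisotropy part of $E_\eta(m_\eta)$ converges to precisely $\lambda\ES(\alpha-\theta_m)$ and not merely $\liminf\ge$ that, which requires re-inspecting the lower-bound proof (Theorem~\ref{thm:gammalimlb}) to see that equality in \eqref{eq:lowerbd} for a minimizing sequence forces asymptotic equality in each of the two constituent lower bounds separately — i.e.\ that the exchange energy and the tail energy decouple in the limit with no slack. Everything else is the routine ``fundamental theorem of $\Gamma$-convergence'' machinery, carried out with weak-$\dot H^1$ lower semicontinuity in place of the usual metric setting and with the translation ambiguity carried along inertly.
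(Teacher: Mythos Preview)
Your overall architecture is correct and matches the paper's proof: standard $\Gamma$-convergence machinery using Proposition~\ref{prop:compactness1} for compactness, Theorems~\ref{thm:gammalimlb} and~\ref{thm:gammlimub} for the two inequalities, and then the norm-convergence trick for strong $\dot H^1$ convergence. Your identification of the ``main obstacle'' is also right on target: the paper resolves it exactly as you suggest, by invoking the two \emph{separate} lower bounds \eqref{eq:exchangeestimate} and \eqref{eq:sfestimate} from the proof of Theorem~\ref{thm:gammalimlb}, so that equality in the total forces equality in each piece and in particular $\int_\Omega|\nabla m_\eta|^2\to\int_\Omega|\nabla m|^2$.

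There is, however, one genuine error. In the reduction to $\theta\in[0,\tfrac\pi2]$ you claim that the isometry $m=(m_1,m_2,m_3)\mapsto(-m_1,m_2,m_3)$ ``preserves $\nabla\cdot m'=0$''. It does not: if $\nabla\cdot m'=\partial_{x_1}m_1+\partial_{x_3}m_3=0$, then for $\tilde m'=(-m_1,m_3)$ one gets $\nabla\cdot\tilde m'=-\partial_{x_1}m_1+\partial_{x_3}m_3=-2\partial_{x_1}m_1$, which has no reason to vanish. So your $\tilde m$ need not lie in $X_0$, and the argument collapses. The fix is easy and is exactly what the paper does: use $\tilde m'=-m'$, $\tilde m_2=m_2$, i.e.\ $(m_1,m_2,m_3)\mapsto(-m_1,m_2,-m_3)$. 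Then $\nabla\cdot\tilde m'=-\nabla\cdot m'=0$, $\tilde m_3=-m_3=0$ on $\partial\Omega$, $\bar{\tilde m}_1=-\cos\theta_m=\cos(\pi-\theta_m)$, $\tilde m_2(\pm\infty,\cdot)=\pm\sin\theta_m=\pm\sin(\pi-\theta_m)$, and the Dirichlet energy is unchanged. Hence $\EA(\theta)=\EA(\pi-\theta)$, while $\ES(\alpha-(\pi-\theta))\le\ES(\alpha-\theta)$ for $\theta\in(\tfrac\pi2,\pi]$ and $\alpha\in(0,\tfrac\pi2]$ (with strict inequality when $\alpha<\tfrac\pi2$), completing the argument.
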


One benefit of \eqref{eq:reducedmodel} is splitting the problem of determining the optimal transition layer into two more feasible ones: First, the energy of asymmetric walls (i.e. walls of small width) has to be determined (at the expense of an additional constraint on $\nabla\cdot m'$). Afterwards, a one-dimensional minimization procedure is sufficient to determine the structure of the wall profile. Direct numerical simulation of \eqref{eq:mm2deta} has been a difficult endeavor (see \cite{miltatlabrune94} and also \cite[Section 3.6.4 (E)]{hubertschaefer98}).

\subsection{Outlook}
\label{outlook}
In the following we briefly discuss an application of our reduced model to the cross-over from symmetric to asymmetric N\'eel wall and point out further interesting (topological) questions and open problems associated with the energy of asymmetric domain walls.

{\bf Bifurcation}. The previous result represents the starting point
in the analysis of the bifurcation phenomenon (from symmetric to asymmetric walls) in terms of the wall angle $\alpha$ (see also \cite{dioasymwalls12}). We will prove that there is a supercritical (pitchfork) bifurcation 
(cf. Figure \ref{bifurc}): This means that for small angles $\alpha\ll 1$, the optimal transition layer $m_\eta$ of $E_\eta$ is asymptotically symmetric (the symmetric N\'eel wall); beyond a critical angle $\alpha^*$, the symmetric wall is no longer stable, whereas the asymmetric wall is.
To 
understand the type of the bifurcation, by \eqref{eq:reducedmodel}, we need to compute the asymptotic expansion of the asymmetric energy up to order $\theta^4$ as $\theta\to 0$ (since the symmetric part of the energy is quartic for small angles $\theta, \alpha\ll 1$, i.e., 
$\ES(\alpha-\theta) \lesssim \alpha^4$).\footnote{Observe that for given $\alpha \in (0,\frac{\pi}{2}]$ the optimal wall angle $\theta_\alpha=\operatorname{argmin}\left(\EA(\theta)+\lambda\ES(\alpha-\theta)\right)\in[0,\frac{\pi}{2}]$ satisfies the estimate $\theta_\alpha \lesssim \alpha^2$. Indeed, by comparison with $\theta=0$ we have $\EA(\theta_\alpha) +2\pi \lambda (\cos\theta_\alpha - \cos\alpha)^2 \leq 2\pi \lambda (1-\cos\alpha)^2$. {Omitting $E_\text{asym}(\theta_\alpha)$ we first obtain $\theta_\alpha\to 0$ as $\alpha\tod0$}, so that by \eqref{expans} one deduces that $2\theta_\alpha^2\lessapprox  \lambda (1-\cos\alpha)^2$ for small $\alpha>0$. From here, the desired estimate follows.}
In fact, we show (see \cite{dioasymwalls12}):
\beq \label{expans}
  \EA(\theta) = 4\pi \theta^2 + \tfrac{304}{105} \pi \theta^4 + \so(\theta^4) \quad \textrm{as}\quad \theta\downarrow 0.
\eeq
This allows us to heuristically determine a critical angle $\alpha^*$ at which the symmetric N\'eel wall loses stability and an asymmetric core is generated. Moreover, a new path of stable critical points with increasing inner wall angle $\theta$ branches off of $\theta=0$ (see Figure \ref{bifurc}).
Indeed, for small $\alpha$, combining with \eqref{expans}, the RHS of \eqref{eq:reducedmodel} as function of $\theta \in [0, \alpha]$ has the unique critical point $\theta=0$ if $\alpha\leq \alpha^*$ where the bifurcation angle $\alpha^*$ is given by $$\alpha^*=\arccos\left(1-\tfrac 2 \lambda\right)+o(1),\quad \textrm{as}\quad \alpha \to 0.$$ (Observe that $\alpha^*\in[0,\tfrac{\pi}{2}]$ provided
$\lambda\geq 2$; therefore, the bifurcation appears only if $\lambda$ is large enough.)
For $\alpha>\alpha^*$, the symmetric wall becomes unstable under 
symmetry-breaking perturbations and the optimal splitting angle $\theta$ becomes positive; hence, the asymmetric wall becomes favored by the system. Moreover, the second variation of the RHS of 
\eqref{eq:reducedmodel} along the branch of positive splitting angles is positive so that the bifurcation from symmetric to asymmetric wall is supercritical. 

\begin{figure}
\centering
\begin{pspicture}(-1,-0.5)(7,3.5)
\psline{->}(-0.5,0)(6,0)
\psline{->}(0,-0.5)(0,3)
\psline[linewidth=2pt](0,0)(3,0)
\psline[linewidth=2pt,linestyle=dashed](3,0)(5.5,0)
\parametricplot[plotstyle=line,linewidth=2pt,plotpoints=51]{0.3}{0.375}{t 10 mul 2 3.1415 mul 44.78 mul 1 t 3.1415 2 mul div 360 mul cos neg add mul 4 3.1415 mul neg add 12 3.1415 mul 3.1415 44.78 mul 4 t 3.1415 2 mul div 360 mul cos neg add 3 div mul add div sqrt 15 mul}
\psline(1,1)(1.5,0.1)
\psline(1,1)(3,1.2)
\rput*(1,1){\psscalebox{0.8}{stable}}
\psline(5,1)(4.5,0.1)
\rput*(5,1){\psscalebox{0.8}{unstable}}
\rput[b](0,3.1){\psscalebox{0.8}{$\theta$}}
\rput[l](6.1,0){\psscalebox{0.8}{$\alpha$}}
\rput[t](3,-0.1){\psscalebox{0.8}{$\alpha^*$}}
\rput[tr](-0.1,-0.1){\psscalebox{0.8}{$0$}}
\end{pspicture}
\caption{Bifurcation diagram for the angle $\theta$ of the asymmetric core, depending on the global wall angle $\alpha$.}
\label{bifurc}
\end{figure}
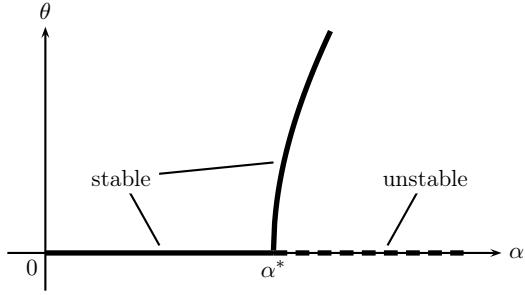

{\bf Topological degree and vortex singularity}. We now discuss topological properties of stray-field free magnetization configurations: In fact, if $m\in X_0$ satisfies \eqref{eq:bcinfty} for some angle $\theta \in (0, \frac \pi 2]$, denoting the ``extended'' boundary of $\Omega$ 
 \beq
\label{not_bdry}
\overline{Bdry}:=\partial\Omega\cup\bigg(\{\pm\infty\}\times[-1,1]\bigg) \cong \mathbb{S}^1,\eeq
then one can define the following winding number of $m$ on $\overline{Bdry}$: due to $m_3=0$ on $\partial\Omega$ as well as $m_3(\pm\infty,\cdot)=0$ 
(so, $(m_1, m_2)\colon\overline{Bdry}\to \mathbb{S}^1$), one obtains (by the homeomorphism \eqref{not_bdry}) a map $\tilde{m} \in H^\frac{1}{2}(\mathbb{S}^1, \mathbb{S}^1)$ to which a topological degree can be associated (see, e.g., \cite{brezisnirenberg95}). In particular, in the case of smooth $\tilde{m}\colon\mathbb{S}^1\to \mathbb{S}^1$, the topological degree (also called winding number) of $\tilde{m}$ is defined as follows: $$\degr(\tilde{m}):=\frac{1}{2\pi} \int_{\mathbb{S}^1} \det(\tilde{m}, \partial_\theta \tilde{m})\, d{\cal H}^1$$
where $\partial_\theta \tilde{m}$ is the angular derivative of $\tilde{m}$.

We will show the following relation between the winding number of $m\in X_0$ on $\overline{Bdry}$ and topological singularities of $(m_1, m_3)$ inside $\Omega$: the non-vanishing topological degree of $(m_1, m_2)\colon\overline{Bdry}\to \mathbb{S}^1$ generates vortex singularities of $(m_1, m_3)$ as illustrated in Figure \ref{asym}. By vortex singularity of $v:=(m_1, m_3)$, we understand a zero of $v$ carrying a non-zero topological degree. In general, this is implied by the existence of a smooth cycle (i.e., closed curve) $\gamma\subset \Omega$ such that
$|v|>0$ on $\gamma$ and $\degr(\frac{v}{|v|}, \gamma)\neq 0$; the vector field $v$ then vanishes in the domain bounded by $\gamma$.

\medskip 

\begin{lem}
\label{lem-topo}
  Let $m \in X_0$ (i.e. $m\in \dot{H}^1(\Omega,\mathbb{S}^2)$ with $\nabla \cdot (m' \mathbf{1}_\Omega) = 0$ in $\mathcal{D}'(\R[2])$) such that \eqref{eq:bcinfty} holds for some angle $\theta \in (0, \frac \pi 2]$. Suppose that $(m_1, m_2):\overline{Bdry}\to \mathbb{S}^1$ has a non-zero winding number on $\overline{Bdry}$. Then there exists a vortex singularity of $(m_1,m_3)$ in $\Omega$ carrying
  a non-zero topological degree.
\end{lem}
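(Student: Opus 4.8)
\emph{Strategy.} The plan is to argue by contradiction: assume $v:=(m_1,m_3)$ carries \emph{no} vortex singularity in $\Omega$, and produce an explicit cycle $\gamma\subset\Omega$ lying just inside the extended boundary, on which $v$ does not vanish and around which $v$ has non-zero winding. By the characterisation of a vortex singularity recalled just before the lemma, this is the desired contradiction. The divergence-free constraint $\nabla\cdot m'=0$ (together with $m_3=0$ on $\partial\Omega$) enters exactly through the behaviour of the ``normal'' component $m_3$ in a thin collar of $\partial\Omega$.

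\emph{The collar cycle.} From $\partial_{x_3}m_3=-\partial_{x_1}m_1$ and $m_3=0$ on $\partial\Omega$, integrating across the bottom collar gives, on the slice $x_3=-1+\delta$,
\[
 m_3(x_1,-1+\delta)\;=\;-\,\partial_{x_1}\!\!\int_{-1}^{-1+\delta}\!\! m_1(x_1,t)\,dt\;=:\;-\,\delta\,\partial_{x_1}a_\delta(x_1),
\]
and analogously $m_3(x_1,1-\delta)=+\,\delta\,\partial_{x_1}b_\delta(x_1)$ at the top, where the strip-averages $a_\delta,b_\delta$ converge to the traces $m_1(\cdot,-1),m_1(\cdot,+1)$ as $\delta\downarrow0$. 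Assuming first $\theta\in(0,\tfrac\pi2)$, so that $m_1\approx\cos\theta>0$ near $x_1=\pm\infty$, I take $\gamma_\delta:=\partial\big(\{|x_1|<L\}\cap\{|x_3|<1-\delta\}\big)$ for small $\delta$ and large $L$ (rounded at the corners). For generic $\delta,L$ one has $v\neq0$ on $\gamma_\delta$ (and if not, a local push-in of $\gamma_\delta$ either restores this or exhibits a vortex outright); the no-vortex hypothesis then forces $\deg(v/|v|,\gamma_\delta)=0$, so it suffices to show $\deg(v/|v|,\gamma_\delta)\neq0$ as soon as $d:=\deg\big((m_1,m_2),\overline{Bdry}\big)\neq0$.

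\emph{The winding count.} On the two vertical sides of $\gamma_\delta$ one has $v\approx(\cos\theta,0)$, contributing $o(1)$ to the winding as $L\to\infty$. On the bottom and top sides, by the collar identity the second component of $v$ is $\mp\delta\,\partial_{x_1}a_\delta$ resp.\ $\pm\delta\,\partial_{x_1}b_\delta$ and hence $\to0$, while the first component tends to the trace $m_1(\cdot,\mp1)$; away from the zeros of these traces $v$ is essentially horizontal, pointing along $+\mathbf e_1$ or $-\mathbf e_1$ according to the sign of $m_1$ on $\partial\Omega$. All the winding is therefore created near the (finitely many) sign changes of $m_1$ along $\overline{Bdry}$, and the crux is the turn of $v$ at one such point. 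There $\partial_{x_1}a_\delta$ (resp.\ $\partial_{x_1}b_\delta$) has the sign of $\partial_{x_1}m_1$ on $\partial\Omega$, so by the collar identity the sign of $m_3$ just inside the boundary is determined; keeping track of the orientation of $\gamma_\delta$ (bottom traversed left-to-right, top right-to-left), a short case check over the two types of sign change on each of the two edges shows that at \emph{every} such point $v$ rotates by exactly $+\pi$ -- the sign being the same throughout, which is precisely the ``handedness'' forced by $\nabla\cdot m'=0$ (and makes the conclusion insensitive to the sign of $d$). Consequently $\deg(v/|v|,\gamma_\delta)\to\tfrac12\,\#\{\text{sign changes of }m_1\text{ along }\overline{Bdry}\}$ as $\delta\downarrow0$; since a loop of non-zero winding on $\mathbb S^1$ crosses $\{m_1=0\}$ at least twice, this is $\ge|d|\ge1$. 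Thus $\deg(v/|v|,\gamma_\delta)\neq0$ for $\delta$ small, contradicting the no-vortex hypothesis, and the lemma follows.

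\emph{Main obstacle.} The delicate step is the ``$+\pi$ at every sign change'' claim: it requires controlling, uniformly in $\delta$, the sign of $m_3$ in the collar near the zero set of the trace $m_1|_{\partial\Omega}$, which a priori is only $H^{1/2}$. I expect this to need either a reduction to smooth $m$ (legitimate since both the hypothesis and the conclusion are topological, and the lemma is applied to minimisers, which are regular) or a careful slicing/trace argument carried out directly in $\dot H^1$. A minor additional point is the endpoint case $\theta=\tfrac\pi2$, where $m^\pm_{\pi/2}=\pm\mathbf e_2$ so that $v$ degenerates at $x_1=\pm\infty$ and the vertical sides of $\gamma_\delta$ must be placed at a generic finite $L$ on which the trace of $v$ does not vanish.
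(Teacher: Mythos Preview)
Your approach is genuinely different from the paper's, and although the strategy is plausible, the gap you flag at the end is real and not so easy to close. The paper avoids it entirely by working with the \emph{stream function}. Since $\nabla\cdot m'=0$ in $\Omega$ and $m_3=0$ on $\partial\Omega$, one writes $m'=\nabla^\perp\psi$ with $\psi(\cdot,-1)\equiv0$, $\psi(\cdot,1)\equiv-2\cos\theta$. The only consequence of the non-zero winding that the paper uses is the very weak one that $m_1<0$ \emph{somewhere} on $\partial\Omega$ (otherwise $(m_1,m_2)$ would map $\overline{Bdry}$ into a half-circle and have degree zero). At such a boundary point, $\partial_{x_3}\psi=-m_1>0$, so $\psi>0$ just above it; since $\psi\to-(x_3+1)\cos\theta$ uniformly as $|x_1|\to\infty$, the superlevel set $\{\psi>0\}$ is bounded and $\psi$ attains an interior maximum. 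A Sard argument (valid for $\psi\in W^{2,1}_{\mathrm{loc}}$) then produces a regular closed level curve $\gamma\subset\Omega$ of $\psi$, on which the unit normal $\nabla\psi/|\nabla\psi|$ automatically has winding number $1$. Since $(m_1,m_3)=\nabla^\perp\psi$ is the $\tfrac\pi2$-rotate of $\nabla\psi$, this is exactly the sought vortex singularity.

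The comparison is instructive. Your argument tries to read off the degree of $v=(m_1,m_3)$ on a large rectangle by tracking sign changes of $m_1$ along $\partial\Omega$; this requires controlling both the \emph{location} of those zeros and the \emph{sign} of $m_3$ in a $\delta$-collar near each of them, uniformly in $\delta$. For an $H^{1/2}$ trace this is genuinely delicate: the zero set of $m_1|_{\partial\Omega}$ need not be finite, the ``transversality'' $\partial_{x_1}m_1\neq0$ at a zero is not available, and any smoothing has to simultaneously preserve the divergence constraint, the boundary condition $m_3|_{\partial\Omega}=0$, and the degree on $\overline{Bdry}$. The paper's route sidesteps all of this: it needs neither a count of sign changes nor any collar estimate, only a single point where $m_1<0$ and then an interior critical-point argument for $\psi$. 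If you want to salvage your approach, the cleanest fix is probably to pass to the stream function as well: the ``$+\pi$ at every crossing'' phenomenon you observe is really the statement that the outward normal to a level curve of $\psi$ has degree $+1$, which is exactly what the paper exploits directly.
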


\bigskip

Motivated by Lemma \ref{lem-topo}, let us introduce the set 
$${\cal L}^\theta=\{m\in X_0\cap X^\theta\,:\, \deg m=1\}$$ for a fixed angle $\theta\in (0, \tfrac{\pi}{2}]$. First of all, 
we have that ${\cal L}^\theta\neq \emptyset$ (see Appendix).\footnote{Naturally, one can address a similar question by imposing an arbitrary winding number $n$. For the case $n=0$, we
analyze this problem in \cite{dioasymwalls12} which is typical for asymmetric N\'eel walls; in particular, for small angles $\theta$, we construct an element $m\in X_0\cap X^\theta$ with $\deg m=0$ and asymptotically minimal energy. {Moreover, given any $m\in X_0\cap X^\theta$ with finite energy, one can use a reflection and rescaling argument to define a finite-energy magnetization on $\Omega$ with degree $0$ (see Remark \ref{rem5} (iii) ).}}
 Since $X_0=\cup_{\theta \in [0, \pi]} \big(X_0\cap X^\theta\big)$, the relation ${\cal L}^\theta\neq \emptyset$ obviously implies that $X_0\cap X^\theta\neq \emptyset$ for every $\theta \in (0, \pi)$ which is essential in our reduced model given by the $\Gamma$-convergence program.
 A natural question concerns the closure (in the weak $\dot{H}^1(\Omega)$-topology) of the set ${\cal L}^\theta$. This is important in order to define the (limit) asymmetric Bloch wall by minimizing the exchange energy on ${\cal L}^\theta$.\footnote{This question is related to the theory of Ginzburg-Landau minimizers with prescribed degree (see e.g. Berlyand and Mironescu \cite{MirBer}).}

\medskip
\begin{open}
Is the following infimum $$\inf_{m\in {\cal L}^\theta} \int_\Omega |\nabla m|^2\, dx$$ attained for every angle $\theta\in (0, \frac \pi 2]$?
\end{open}

\subsection{Structure of the paper}
This paper is organized as follows: In Section~\ref{sec:physics}, we explain the relation of \eqref{eq:mm2deta} to the full Landau-Lifshitz energy, as well as the physical background of our analysis.

In Section~\ref{sec:comp}, we prove the compactness results in Lemma~\ref{lem:sgnchg} and Proposition~\ref{prop:compactness1}, which in particular yield existence of minimizers of $E_\eta$, $\EA(\theta)$ and $E_0$.

Section~\ref{sec:proof} contains the proofs of the lower and upper bound (Theorems \ref{thm:gammalimlb} and \ref{thm:gammlimub}) of our $\Gamma$-convergence result and also, the proof of Corollary \ref{cor1}.

In the Appendix, finally, we show that the set $X_0\cap X^\theta$ is non-empty for any given angle $\theta \in (0,\tfrac{\pi}{2}]$. To this end, we construct an admissible configuration in $\EA(\theta)$ with non-zero topological degree on the boundary of $\Omega$ (i.e., of asymmetric Bloch-wall type). Moreover, we prove Lemma~\ref{lem-topo}. 

\section{Physical background}\label{sec:physics}
In this section, we denote by $\nabla=(\partial_{x_1},\partial_{x_2},\partial_{x_3})$ the full gradient of functions depending on $x=(x_1,x_2,x_3)$. Recall that the prime $'$ denotes the projection on the $x_1x_3$-plane, i.e. $\nabla' = (\partial_{x_1},\partial_{x_3})$, $x'=(x_1,x_3)$.

{\bf Micromagnetics}. Let $\omega\subset \R[3]$ represent a ferromagnetic sample 
whose magnetization is described by the unit-length vector-field $m \colon \omega \to \mathbb{S}^2$.
Assume that the sample exhibits a uniaxial anisotropy with $\mathbf{e}_2=(0,1,0)$ as ``easy axis'', i.e. favored direction of $m$.
 The well-accepted micromagnetic model (see e.g. \cite{dkmo04,hubertschaefer98}) states that in its ground state the magnetization minimizes the Landau-Lifshitz energy:
\begin{align}\label{eq:mmfull}
  E^{3D}(m) = d^2 \int_\omega \lvert \nabla m \rvert^2 dx + \int_{\R[3]} \lvert h(m) \rvert^2 dx + Q\int_\omega \! m_1^2+m_3^2 \, dx - 2 \int_\omega \! h_\text{ext} \cdot m \, dx.
\end{align}
Here, the exchange length $d$ is a material parameter that determines the strength of the exchange interaction of quantum mechanical origin, relative to the strength of the stray field $h=h(m)$. The stray field is the gradient field $h=-\nabla u$ that is (uniquely) generated by the distributional divergence $\nabla \cdot (m\mathbf{1}_\omega)$ via Maxwell's equation
\begin{align} \label{eq:maxwell}
  \nabla\cdot(h+m\mathbf{1}_\omega) &= 0 \quad\text{in }\mathcal{D}'(\R[3]).
\end{align}
The non-dimensional quality factor $Q>0$ is a material constant that measures the relative strength of the energy 
contribution coming from misalignment of $m$ with $\mathbf{e}_2$. \footnote{A typical, experimentally accessible, soft ferromagnetic material is Permalloy, for which $d\approx 5\text{nm}$ and $Q=2.5\cdot 10^{-4}$.}
 The last term, called Zeeman energy, favors alignment of $m$ with an external magnetic field $h_\text{ext} \colon \omega \to \R[3]$.

\bigskip

{\bf Derivation of our model}.
We assume the magnetic sample to be a thin film, infinitely extended in the $(x_1x_2)$-plane, i.e. $\omega = \R[2]\times (-t,t)$, where two magnetic domains of almost constant magnetization $m\approx m^\pm_\alpha$ have formed for $\pm x_1\gg t$.
Physically, such a configuration is stabilized by the combination of uniaxial anisotropy 
and suitably chosen external field $h_\text{ext}=Q\cos\alpha\,\mathbf{e}_1$. Moreover, we assume that $m$ and hence, the stray field $h=(h_1, 0, h_3)$ are independent of the $x_2$-variable so that \eqref{eq:mmfull} formally reduces to integrating the energy density (per unit length in $x_2$-direction):
\begin{align*}
  E^{2D}(m) = d^2 \int_{\omega'} \lvert \nabla' m \rvert^2 dx' + \int_{\R[2]} \lvert h' \rvert^2 dx' + Q \int_{\omega'} \! (m_1-\cos \alpha)^2 + m_3^2 \, dx'
  \end{align*}
  where $\omega'=\R\times (-t, t)$ and $h'=h'(m)=-\nabla' u$ satisfies \eqref{eq:maxwell_2D} driven by the $2D$ divergence of $m' \mathbf{1}_{\omega'}$. Recall that the prime $'$ here denotes a projection onto the coordinate directions $(x_1,x_3)$ transversal to the wall plane. 
After non-dimensionalization of length with the film thickness $t$, i.e., setting $\tilde x'=\tfrac{x'}{t}$, $\tilde \omega'=\tfrac{\omega'}{t}$, $\tilde m(\tilde x')=m(x')$, $\tilde u(\tilde x')=\tfrac{u(x')}{t}$,
the above specific energy (per unit length in $x_2$) is given by
\begin{align}\label{energy2d}
\tilde E^{2D}( \tilde m) = d^2\int_{\tilde \omega'} | \tilde \nabla' \tilde m|^2 d\tilde x' +t^2 
\int_{\R[2]} |\tilde{\nabla}' \tilde u |^2 d\tilde x' + Q t^2  \int_{\tilde \omega'} \! (\tilde m_1 - \cos\alpha )^2 + \tilde m^2_3 \, d\tilde x',
\end{align}
where the differential operator $\tilde \nabla'$ refers to the variables $\tilde x'=(\tilde x_1, \tilde x_3)$ and $\tilde u\colon\R[2]\to \R$ is the $2D$ stray-field potential given by $$\tilde \Delta' \tilde u=\tilde \nabla'\cdot (\tilde m \mathbf{1}_{\tilde \omega'})\quad \textrm{in} \quad \mathcal{D}'(\R[2]).$$
Throughout the section, {\it we omit $\, \tilde{}\, $ and $\, '\, $}.

\medskip

\nd {\bf Symmetric walls}. In the regime of very thin films (i.e. for a sufficiently small ratio of film thickness $t$ to exchange length $d$, see below for the precise regime),
the symmetric N\'eel wall $m$ is the favorable transition layer: It is characterized by a reflection symmetry w.r.t. the midplane $x_3=0$, see 3) below. In fact, to leading order in $\frac{t}{d}$, it is independent of the thickness variable $x_3$, i.e. $m=m(x_1)$, and in-plane, i.e. $m_3=0$.
The symmetric N\'eel wall is a two length-scale object with a core of size $w_{core}=\co(\tfrac{d^2}{t})$ and two logarithmically decaying tails $w_{core}\lesssim |x_1|\lesssim w_{tail}=\co(\tfrac{t}{Q})$ (see e.g. Melcher \cite{melcher03,melcher04}). 
It is invariant w.r.t. all the symmetries of the variational problem (besides translation invariance):
\begin{enumerate}
\item[1)] $x_1\to -x_1$, $x_3\to -x_3$, $m_2\to -m_2$;
\item[2)] $x_1\to -x_1$, $m_3\to -m_3$, $m_2\to -m_2$;
\item[3)] $x_3\to -x_3$, $m_3\to -m_3$; 
\item[4)] $Id$.
\end{enumerate}
The specific energy of a N\'eel wall of angle $\alpha=\tfrac{\pi}{2}$ is given by
$$E^{2D}(\textrm{symmetric N\'eel wall})=\co({t^2} \frac{1}{\ln \frac{w_{tail}}{w_{core}}})=\co(t^2 \frac{1}{\ln \frac{t^2}{d^2 Q}})$$
(see e.g. \cite{ottocrossover02,dkmo04}). 
For a symmetric N\'eel wall of angle $\alpha<\tfrac{\pi}{2}$, 
the energy is asymptotically quartic in $\alpha$ as it is proportional to
$(1-\cos\alpha)^2$ (see e.g. \cite{ignat09}).

\medskip

\nd {\bf Asymmetric walls}. 
For thicker films, the optimal transition layer has an asymmetric core, where the symmetry 3) is broken (see e.g. \cite{hubert69,hubert70}). The main feature of this asymmetric core is that it is approximately stray-field free. Hence to leading order, the asymmetric core is given by a smooth transition layer $m$ that satisfies \eqref{eq:bcinfty} and
\beq
\label{mag_asym}
m \colon \omega \to \mathbb{S}^2, \, \nabla \cdot m'=0 \textrm{ in } \, \omega \quad \textrm{and} \quad m_3=0 \textrm{ on } \, \partial \omega.\eeq
Observe that $(m_1, m_2)\colon\partial \omega \to \mathbb{S}^1$ since $m_3$ vanishes on $\partial \omega$, so that one can define a topological degree of $(m_1, m_2)$ on $\partial \omega$ (where $\partial \omega$ is the closed ``infinite'' curve $\big(\R\times \{\pm 1\}\big)\cup \big(\{\pm \infty\}\times [-1,1]\big)$). 
The physical experiments, numerics and constructions predict two types of asymmetric walls, differing in
their symmetries and the degree of $(m_1, m_2)$ on
$\partial \omega$: 
\begin{enumerate}
\item For small wall angles $\alpha$,
the system prefers the so-called asymmetric N\'eel wall. Its main features are the conservation of symmetries 1) and 4) and a
vanishing degree of $(m_1, m_2)$ on $\partial \omega$ (see Figure~\ref{asym}). Due to symmetry 1), the $m_2$ component of an asymmetric N\'eel wall
vanishes on a curve that is symmetric with respect to the center of the wall (by $x\to -x$). Moreover, the phase of $(m_1,m_2)$ is not monotone at the surface
$|x_3|=1$.

\item For large wall angles $\alpha$,
  the system prefers the so-called asymmetric Bloch wall. These walls only have the trivial symmetry 4).
  Another difference is the non-vanishing topological degree on $\partial \omega$ (i.e., $\degr\big((m_1, m_2), \partial \omega\big)=\pm 1$). Therefore, a vortex is nucleated in the wall core, and the curve of zeros of $m_2$ is no longer symmetric with respect the center of the wall (see Figure~\ref{asym}). Moreover, the phase of $(m_1,m_2)$ is expected to be monotone at the surface
$|x_3|=1$.
\end{enumerate}
The asymmetric wall has a single length scale $w_{core}\sim t$ and the specific energy comes from the exchange energy (see e.g. \cite{ottocrossover02,dkmo04}). It is of the order
$$E^{2D}(\textrm{asymmetric wall})=\co(d^2).$$ 
For small wall angles, the energy of the optimal asymmetric wall is asymptotically quadratic in $\alpha$ (see \cite{dkmo04}).
\begin{figure}[ht] %
  \begin{minipage}{0.48\linewidth}
    \centering
    \includegraphics[height=4cm]{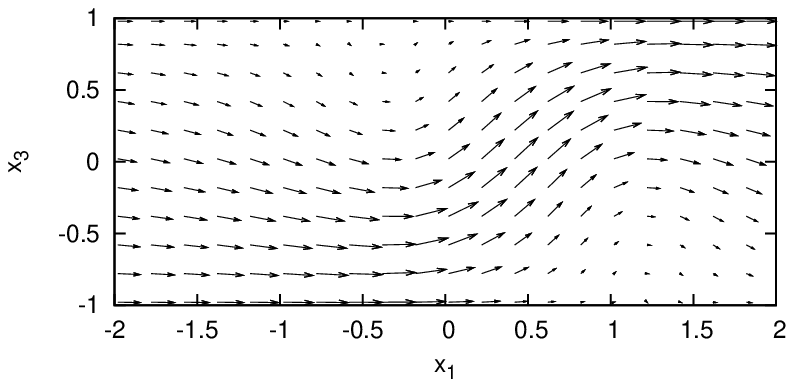}
    \rput(3.2,2.8){
      \pscircle*[linecolor=white](0,0){0.31cm}
      \pscircle(0,0){0.3cm}
      \rput{45}(0,0){\psline(-0.3,0)(0.3,0)}
      \rput{-45}(0,0){\psline(-0.3,0)(0.3,0)}
    }
    \rput(-2.0,2.8){
      \pscircle*[linecolor=white](0,0){0.31cm}
      \pscircle(0,0){0.3cm}
      \pscircle*(0,0){0.05cm}
    }
  \end{minipage}
  \;
  \begin{minipage}{0.48\linewidth}
    \centering
    \includegraphics[height=4cm]{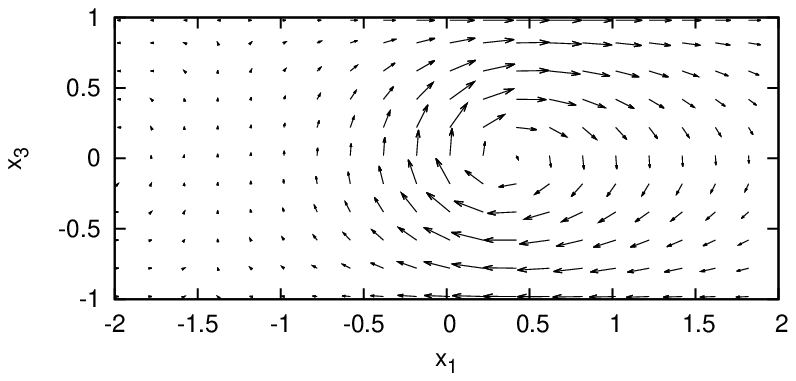} 
    \rput(3.2,2.85){
      \pscircle*[linecolor=white](0,0){0.31cm}
      \pscircle(0,0){0.3cm}
      \rput{45}(0,0){\psline(-0.3,0)(0.3,0)}
      \rput{-45}(0,0){\psline(-0.3,0)(0.3,0)}
    }
    \rput(1.35,2.85){\psscalebox{0.7}{
      \pscircle*[linecolor=white](0,0){0.31cm}
      \pscircle(0,0){0.3cm}
      \rput{45}(0,0){\psline(-0.3,0)(0.3,0)}
      \rput{-45}(0,0){\psline(-0.3,0)(0.3,0)}
    }}
    \rput(-2.0,2.85){
      \pscircle*[linecolor=white](0,0){0.31cm}
      \pscircle(0,0){0.3cm}
      \pscircle*(0,0){0.05cm}
    }
  \end{minipage}
  \caption{Asymmetric N\'eel wall (on the left) and asymmetric Bloch wall (on the right). Numerics.\protect\footnotemark}
  \label{asym}
  \end{figure}

  \footnotetext{The magnetization was obtained by numerically solving the Euler-Lagrange equation corresponding to $E_\text{asym}(\theta)$. To this end, a Newton method with suitable initial data was employed.}

\medskip

\nd {\bf Regime}. We focus on the challenging regime of soft materials of thickness $t$ close to the exchange length $d$ (up to a logarithm),
where we expect the cross-over in the energy scaling of symmetric walls and asymmetric walls (see \cite{ottocrossover02}):
$$Q\ll 1 \quad \textrm{and} \quad \ln \tfrac{1}{Q}\sim (\tfrac{t}{d})^2.$$
Rescaling the energy \eqref{energy2d} by $d^2$ and setting
$$\eta:=Q\tfrac{t^2}{d^2}\ll 1\quad \textrm{and} \quad \lambda:=\tfrac{t^2}{d^2 \ln \frac{1}{\eta}}>0,$$
then $\lambda=\co(1)$ is a tuning parameter in the system, and the rescaled energy, which is to be minimized, takes the form of energy $E_\eta$
given in \eqref{eq:mm2deta}
under the constraint
\begin{align*} &m\colon\Omega=\R\times (-1,1)\to \mathbb{S}^2, \, \, m(\pm \infty, \cdot)=m_\alpha^\pm,\\
& h=-\nabla u\colon\R[2]\to \R[2],  \, \,\nabla\cdot (h + m' \mathbf{1}_{\Omega})=0 \, \, \textrm{in} \, \, \mathcal{D}'(\R[2]).
\end{align*}

Observe that the parameter $\lambda$ measures the film thickness $t$ relative to the film thickness $d\ln^\frac{1}{2}\tfrac{1}{Q}$ characteristic to the cross-over. The limit $\eta \tod 0$ corresponds to a limit of vanishing strength of anisotropy, while at the same time the relative film thickness $\frac{t}{d}$ increases in order to remain in the critical regime of the cross-over.

\medskip

\nd {\bf Other microstructures in micromagnetics}. In other asymptotic regimes, different pattern formation is observed. Let us briefly mention three other microstructures that were recently studied: the concertina pattern, the cross-tie wall and a zigzag pattern.

{\it Concertina pattern}. In a series of papers (\cite{os10,sswmo12} among others) the formation and hysteresis of the concertina pattern in thin, sufficiently elongated ferromagnetic samples were studied. While in this case the transition layers between domains of constant magnetization are symmetric N\'eel walls, the program carried out for the concertina (a mixture of theoretical and numerical analysis, and comparison to experiments) serves as motivation for our work on the energy of domain walls in moderately thin films. Moreover, we hope that our analysis of the wall energy is helpful for studying a different route to the formation of the concertina pattern in not too elongated samples as proposed in \cite{vdbv82}, see also \cite{deflo12}.

{\it Cross-tie wall}. An interesting transition layer observed in physical experiments is the cross-tie wall (see \cite[Section 3.6.4]{hubertschaefer98}). It was rigorously studied in a reduced $2D$ model (by assuming vertical invariance of the magnetization) where a forcing term amounts to strong planar anisotropy that dominates the stray-field energy (see \cite{ARS02, RS01, RS03}). For small wall angles $\theta\in (0, \frac \pi 4]$, the optimal transition layer
is given by the symmetric N\'eel wall; for larger angles $\theta>\frac \pi 4$, the domain wall has a two-dimensional profile consisting in a mixture of vortices and N\'eel walls. The energetic cost of a transition in this $2D$ model is proportional to $\sin \theta-\theta \cos \theta$, so it is cubic in $\theta$ as $\theta\to 0$. This is due to the scaling of the stray-field energy (because of the thickness invariance assumption), which makes this reduced $2D$ model seem  
artificial. In the physics literature, it is known that for the full 3D model and large wall angles
the cross-tie wall may also be favored over the asymmetric Bloch wall. We hope that our more realistic wall-energy density confirms and helps to quantify this issue.

{\it A zigzag pattern}. In thick films, zigzag walls also occur. This pattern has been studied by Moser \cite{Moser09} in a $3D$ model with a
uniaxial anisotropy in an external magnetic field perpendicular to the ``easy axis'' (rather similar to our model). In fact, zigzag walls are to be expected there; however, this question is still open since 
the upper bound given for the limiting wall energy through
a zigzag construction does not match
the lower bound. Recently, in a 
reduced $2D$ model, Ignat and Moser \cite{IgMo} succeeded to rigorously prove the optimality of the zigzag pattern (for small wall angles). This was due to the improvement of the lower bound based on an entropy method (coming from scalar conservation laws). Remarkably, the function $\sin \theta - \theta \cos \theta$ plays an important role for the limiting energy density in that context as well as for the cross-tie wall.

\section{Compactness and existence of minimizers}\label{sec:comp}
In this section we prove compactness results for sequences $\{m_k\}_{k\uparrow\infty}$ of magnetizations of bounded exchange energy. As an application we will derive existence of minimizers of $E_\eta$ (for some fixed $\eta\in(0,1)$) and $\EA(\theta)$ subject to a prescribed wall angle

$\theta\in (0, \pi)$,
and show that the optimal angle in $E_0$ is attained (cf. \eqref{eq:reducedmodel}).

All these statements are rather straightforward up to one point: The condition of sign-change
\begin{equation*}
\pm m_2(\pm \infty, \cdot) \geq 0
\end{equation*}
can in general not be recovered in the limit as shown in Figure \ref{fig:signprob}.

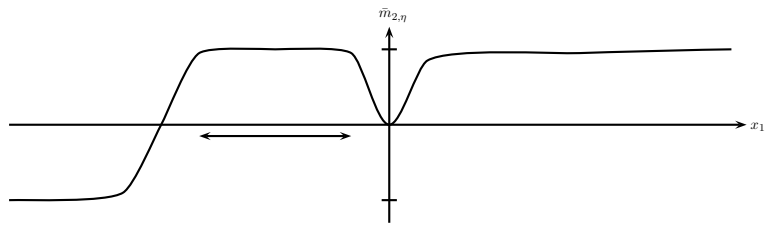
\begin{figure}[hbtp]
\centering
\begin{pspicture*}(0.45,0.5)(10.5,3.6)
%axes
\psline{->}(0.5,2)(10.2,2)
\psline{->}(5.5,0.7)(5.5,3.3)
%ticks
\psline{-}(5.4,1)(5.6,1)
\psline{-}(5.4,3)(5.6,3)
%labels
\rput(5.55,3.45){\psscalebox{0.5}{$\bar{m}_{2,\eta}$}}
\rput(10.35,1.975){\psscalebox{0.5}{$x_1$}}
%intervals
\psline{<->}(3,1.85)(5,1.85)
%graph
\psecurve[curvature=0.5 .1 0]{-}(0,1)(0.5,1)(2,1.1)(2.5,2)(3,2.95)(4,3)(5,2.95)(5.5,2)(6,2.85)(8,2.95)(10,3)(11.5,2.99)
\end{pspicture*}
\caption{The $x_3$-average $\bar{m}_{2,\eta}$ of the $m_2$-component. The arrow $\longleftrightarrow$ denotes that the length of the corresponding interval grows to $+\infty$ as $\eta \tod 0$. Then the limit $\bar{m}_{2}$ (as $\eta\tod 0$) has the same sign at $+\infty$ and $-\infty$.
}
\label{fig:signprob}
\end{figure}

However, we will show that one can always choose zeros $x_{1,\eta}$ of $\bar{m}_{2,\eta}$ in such a way that $m_\eta(\cdot+x_{1,\eta},\cdot)$ has the correct change of sign in the limit $\eta \tod 0$.

In the sequel we denote by $C>0$ a universal, generic constant, whose value may change from line to line, unless otherwise stated.

\subsection{Compactness}\label{sec::compactness}
We start by proving the $1D$ concentration-compactness result stated in Lemma~\ref{lem:sgnchg}.

\begin{proof}[Proof of Lemma \ref{lem:sgnchg}:] Due to \eqref{eq:sgnuinfty}, the set $Z_k := \{z \in \R \,\big|\, u_k(z)=0\}$ of zeros of $u_k$ is non-empty, and up to a translation in $x_1$-direction we may assume $u_k(0)=0$ for all $k\in\N$. 

  \textbf{Step 1:} \textit{For every sequence $\{z_k \in Z_k\}_{k\uparrow\infty}$ there exist a subsequence $\Lambda \subset \N$ and a limit $u \colon \R \to \R$ such that $u_k(\cdot+z_k)\to u$ locally uniformly for $k\tou\infty, k\in\Lambda$. Moreover, we have the bound 
  \begin{align*}
    \int_{\R} \lvert \dds u \rvert^2 ds \leq \liminf_{\substack{k\tou\infty\\k\in\Lambda}} \int_{\R} \lvert \dds u_k \rvert^2 ds < \infty.
  \end{align*}}

Indeed, by Cauchy-Schwarz's inequality, we have for $t\neq \tilde t$ that
  \begin{align*}
    \frac{\lvert u_k(t) - u_k(\tilde t) \rvert^2}{\lvert t- \tilde t \rvert}= \frac{\bigl(\int_{\tilde t}^t \dds u_k\, ds\bigr)^2}{\lvert t- \tilde t \rvert} \leq \int_{\R} \lvert \dds u_k \rvert^2 ds;
  \end{align*}
  thus, by \eqref{eq:nubdd}, we deduce that $\{u_k(\cdot+z_k)\}_{k\uparrow \infty}$ is uniformly H\"older continuous with exponent $\frac{1}{2}$. In particular, since $u_k(z_k)=0$, we also have that $\{u_k(\cdot+z_k)\}_{k\uparrow \infty}$ are locally uniformly bounded. Hence, the Arzel\`a-Ascoli compactness theorem yields uniform convergence on each compact interval $[-n, n]$, $n\in \N$, up to a subsequence. By a diagonal argument, one finds a subsequence $\Lambda\subset \N$ and a continuous limit $u\colon\R\to \R$ such that
 $$\textrm{$u_k(\cdot+z_k)\to u$ locally uniformly for $k\tou\infty$, $k\in\Lambda$.}$$ 
Moreover, the $L^2(\R)$-estimate on $\dds u$ follows from weak convergence in $L^2$ of $\dds u_k$ and weak lower-semicontinuity of the $L^2$ norm.
  
  \textbf{Step 2:} {\it Inductive construction of zeros.} \, 
Assume by contradiction that for every sequence $\{z_k \in Z_k\}_{k\uparrow\infty}$, no accumulation point $u$ (w.r.t. to locally uniform convergence) of the sequence $\{u_k(\cdot+z_k)\}_{k\uparrow\infty}$ satisfies \eqref{eq:signu}. We will show by an iterative construction that one can select a subsequence of $\{u_k\}_{k\uparrow\infty}$ such that each term $u_k$ has asymptotically infinitely many zeros (i.e., $\#Z_k \to \infty$ as $k\tou \infty$) with large distances in-between.

More precisely, we prove that for every $l\in\N$ there exist a limit $u^l\in\dot{H}^1(\R)$ and subsequences $\Lambda_l \subset \Lambda_{l-1}\subset \ldots \subset \Lambda_1 \subset \N$, such that for all $k\in\Lambda_l$ there exists an additional zero $z_k^l \in Z_k$ of $u_k$ with the properties:
\begin{gather*}
  \min_{1\leq i\neq j\leq l} \lvert z^i_k-z^j_k \rvert \to \infty \quad \text{and} \quad u_k(\cdot+z^l_k) \to u^l \text{ locally uniformly, as }k\tou \infty,\, k\in\Lambda_l.
\end{gather*}

In Step 3, we finally show that this construction implies that $u\equiv 0$ is one of the accumulation points of $\{u_k(\cdot+z_k)\}_{k\uparrow\infty}$ for $z_k\in Z_k$ a diagonal sequence of these $z_k^l$, i.e., \eqref{eq:signu} is satisfied, in contradiction to our assumption.

At level $l=1$, we choose the zero $z_k^1=0$ of $u_k$ for every $k\in \N$. Then by Step 1, there exists a subsequence $\Lambda_1 \subset \N$ and a limit $u^1\in \dot{H}^1(\R)$ such that 
$$\textrm{$u_k(\cdot+z_k^1)\to u^1$ locally uniformly for $k \tou \infty$, $k\in\Lambda_1$}.$$ By assumption, $u^1$ does not satisfy \eqref{eq:signu}. Hence, there exists $\varepsilon_1>0$ such that for every $s>0$ we can find $s_1>s$ such that
  \begin{align*}
    u^1(s_1)\leq -\varepsilon_1 <0 \quad \text{or}\quad u^1(-s_1) \geq \varepsilon_1 > 0. 
  \end{align*}
By uniform convergence, we also deduce that for every $s>0$ there exists an index $k_s\in \Lambda_1$ such that
  \begin{align*}
   \sup_{[-s_1,s_1]} \lvert u_{k}(\cdot+z_{k}^1) - u^1 \rvert \leq \tfrac{\varepsilon_1}{2} \text{ for } k \geq k_s, \, k\in \Lambda_1,
  \end{align*}
 which in particular implies that
  \beq
  \label{asum1}
    u_{k}(s_1+z_{k}^1)< 0 \quad \text{or} \quad u_{k}(-s_1+z_{k}^1)>0, \, \, \text{ for } k \geq k_s, \, k\in \Lambda_1.
  \eeq

  At level $l=2$, we proceed as follows: By the construction at level $l=1$, for every $s:=n\in \N$ we choose as above $s_1\geq n$ and $k:=k_n\in\Lambda_1$ (here, $\{k_n\}_{n\tou\infty}$
 is to be chosen increasing). We also know that $u_k$ satisfies \eqref{eq:signu} which implies by \eqref{asum1} that $u_k$ changes sign
 at the left of $-s_1+z_{k}^1$ or at the right of $s_1+z_{k}^1$. Choose $z_k^2\in Z_k$ as this new zero of $u_k$. Since $z^1_{k_n}=0$, we have
  \begin{align*}
    \lvert z_{k_n}^1 - z_{k_n}^2 \rvert \to \infty \, \, \textrm{as} \, \, n\tou \infty.
  \end{align*}
 Let $\tilde \Lambda_{2} = \left\{k_{n} \with n\in\N\right\} \subset \Lambda_1$ be the sequence of these indices.
 By Step 1, there exist a subsequence $\Lambda_2 \subset \tilde \Lambda_2$ and a limit $u^2\in \dot{H}^1(\R)$ such that 
 $$\textrm{$u_k(\cdot+z_k^2)\to u^2$ locally uniformly for $k \tou \infty$, $k\in\Lambda_2$}.$$
 
We now show the general construction, i.e. how one obtains the $(l+1)^\text{th}$ set of zeros from the construction after the $l^\text{th}$ step. Indeed, suppose the functions $u^1, \ldots, u^l$, the sequences $\Lambda_l\subset \ldots \subset \Lambda_1 \subset \N$ and the zeros $z_k^1,\ldots,z_k^l$ of $u_k$ for every $k\in \Lambda_l$ have already been constructed. We now construct $u^{l+1}$, $\Lambda_{l+1}$ and $z_k^{l+1}$ for $k \in \Lambda_{l+1}$:
By assumption, none of the limits $u^j$, $1\leq j \leq l$, satisfies \eqref{eq:signu}. Hence, there exists $\varepsilon_l>0$ such that for every $s >0$ we can find $s_1, \dots, s_l\geq s$ with the property:
  \begin{align*}
    \Bigl( u^j(s_j)\leq -\varepsilon_l <0 \quad \text{or}\quad u^j(-s_j) \geq \varepsilon_l > 0 \Bigr)\, \, \text{ for every } \, 1\leq j \leq l.  
  \end{align*}
By uniform convergence, we also deduce that for every $s>0$ there exists an index $k_s\in \Lambda_l$ such that for every
$1\leq j \leq l$ and every $k\geq k_s$ with $k\in \Lambda_l$:
  \begin{align*}
    \sup_{[-s_j,s_j]} \lvert u_{k}(\cdot+z_{k}^j) - u^j \rvert \leq \tfrac{\varepsilon_l}{2}
    \qquad \text{and} \qquad
    \min_{1\leq i\neq j \leq l} \lvert z_{k}^i - z_{k}^j \rvert \geq 4\max_{1\leq j\leq l} s_j.
  \end{align*}
  In particular, for every $s:=n\in \N$ we choose as above $s_1, \dots, s_l\geq n$ and $k:=k_n\in\Lambda_l$ (again, $\{k_n\}_{n\tou\infty}$
 is to be chosen increasing). Then we deduce that for all $1\leq j \leq l$ and $k\in\Lambda_l$:
  \begin{align*}
    u_{k}(s_j+z_{k}^j)< 0 \quad \text{or} \quad u_{k}(-s_j+z_{k}^j)>0,
  \end{align*}
  and the $l$ intervals $\{I_j:=[z_{k}^j-s_j, z_{k}^j+s_j]\}_{1\leq j\leq l}$ are disjoint.

Since $u_{k}$ satisfies \eqref{eq:signu}, 
there exists a new zero $z_k^{l+1} \in Z_{k}\setminus \bigcup_{j=1}^l I_j$ of $u_k$. Indeed, let us assume (after a rearrangement) that these intervals are ordered $I_1< I_2 < \dots < I_l$. If there is no zero to the left of $I_1$ 
(i.e., on $(-\infty, z_{k}^1-s_1]$) and in-between these $l$ intervals (i.e., on $\bigcup_{j=1}^{l-1} [z_{k}^j+s_j, z_{k}^{j+1}-s_{j+1}]$), then $u_{k}$ must have a negative sign at the right endpoint of each interval $I_j$ (i.e., $u_k(z_{k}^j+s_j)<0$) with ${1\leq j\leq l}$. In particular, there must be a zero of $u_k$ at the right of $I_l$, that we call $z_k^{l+1}$.

Set $\tilde \Lambda_{l+1} = \left\{k_{n} \with n\in\N \right\} \subset \Lambda_l$.
Then
  \begin{align*}
    \min_{1\leq j \leq l} \lvert z_{k}^j - z_k^{l+1} \rvert \to \infty \, \, \text{ as } k\tou \infty,\, k\in \tilde \Lambda_{l+1}.
  \end{align*}
  Finally, by Step 1, there exist $\Lambda_{l+1}\subset \tilde \Lambda_{l+1}$ and $u^{l+1}$ such that
  \begin{align*}
    u_k(\cdot+z_{k}^{l+1}) \to u^{l+1} \quad \text{locally uniformly in $\R$ as }k \tou \infty,\,k\in\Lambda_{l+1},
  \end{align*}
  which finishes the construction at the level $l+1$.

  \textbf{Step 3:} {\it Construction of vanishing diagonal sequence.}  We prove that the assumption in Step 2 (i.e. the assumption that no accumulation point of a sequence of translates of $\{u_k\}_{k\tou\infty}$ satisfies \eqref{eq:signu}) leads to a contradiction:
  
  Consider the construction done in Step 2. The sequence $\{u^l\}_{l\tou\infty}$ is uniformly bounded in $\dot{H}^1(\R)$. Hence, as in Step 1, there is a subsequence $\Lambda \subset \N$ and a function $u$ such that $u^l \to u$ locally uniformly for $l\tou\infty$, $l\in \Lambda$. In the following, we prove that $u\equiv 0$ on $\R$ (in particular \eqref{eq:signu} is satisfied). Indeed,
 we first observe that $0=u^l(0)\to u(0)$ as $l\tou\infty$, $l\in \Lambda$; thus, $u(0)=0$. 
 Let now $a>0$ and we want to prove that $u(a)=0$. For that, let $l\in \Lambda$ and $k\in \Lambda_l$. Then for $1\leq j \leq l$,
  \begin{align*}
    \frac{\lvert u_k(a+z_k^j) \rvert^2}{a} = \frac{\lvert u_k(a+z_k^j)-u_k(z_k^j)\rvert^2}{a} \leq \int_{z_k^j}^{a+z_k^j} \lvert \dds u_k \rvert^2 ds.
  \end{align*}

  For $k=k(a)\in\Lambda_l$ sufficiently large, the intervals $\{[z_k^j,a+z_k^j]\}_{1\leq j \leq l}$ are disjoint and we have
  \begin{align*}
     \sum_{1\leq j \leq l} \frac{\lvert u_k(a+z_k^j) \rvert^2}{a} \leq \sum_{1 \leq j\leq l} \int_{z_k^j}^{a+z_k^j} \lvert \dds u_k \rvert^2 ds \leq \int_{\R} \lvert \dds u_k \rvert^2 ds.
  \end{align*}

  Letting $k \tou \infty$, $k\in\Lambda_l$, it follows
  \begin{align*}
    \sum_{1\leq j \leq l} \frac{\lvert u^j(a) \rvert^2}{a} \leq \limsup_{k\tou \infty} \int_{\R} \lvert \dds u_k \rvert^2 ds < \infty.
  \end{align*}

  We may now let $l \tou \infty$, $l\in \Lambda$, and deduce that $u^l(a) \to 0$. In particular, this shows $u(a)=0$. The same argument adapts to the case $a<0$, so that one concludes $u\equiv 0$ in $\R$.

  Therefore, taking a diagonal sequence of the functions constructed in Step 2, one can then find a family $\{u_{k_l}(\cdot+z_{k_l}^l)\}_{l\in \Lambda}$ converging (locally uniformly) to the limit function $u\equiv 0$ that satisfies \eqref{eq:signu} in contradiction to our assumption.
\end{proof}

The following lemma reduces the problem of finding admissible limits (i.e., satisfying the limit condition \eqref{eq:bcinfty}) for a sequence of vector fields $\{m_k \colon \Omega \to \mathbb{S}^2\}_{k\tou\infty}$ to shifting the $x_3$-average $\bar{m}_{2,k}$ of the second component ${m}_{2,k}$:
\medskip
\begin{lem}\label{lem:modml2}
  Let $m\in \dot{H}^1(\Omega,\mathbb{S}^2)$ satisfy the limit condition $m'(\pm\infty,\cdot)=(\cos \theta, 0)$ in the $(m_1m_3)$-components in the sense of \eqref{convent} for some angle $\theta \in (0,\pi)$. Then
  \begin{align*}
    \int_\Omega \bigl\lvert \lvert m_2 \rvert - \sin\theta \bigr\rvert^2 dx < \infty.
  \end{align*}
  If additionally the $x_3$-average $\bar{m}_2$ of $m_2$ satisfies \eqref{eq:signu} (i.e. $\bar{m}_2$ changes sign), then we have 
  \begin{align*}
    m(\pm \infty,\cdot) = m^\pm_\theta.
  \end{align*}
\end{lem}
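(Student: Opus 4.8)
The plan is to prove the two statements in order, the first being the quantitative control $\int_\Omega \bigl\lvert \lvert m_2\rvert - \sin\theta\bigr\rvert^2\,dx < \infty$, and the second the upgrade to the full boundary condition $m(\pm\infty,\cdot) = m^\pm_\theta$ under the extra sign-change hypothesis on $\bar m_2$. For the first part, I would start from the pointwise identity $m_2^2 = 1 - m_1^2 - m_3^2 = 1 - |m'|^2$ coming from $|m|=1$, so that $|m_2|^2 - \sin^2\theta = \cos^2\theta - |m'|^2 = (\cos\theta - m_1)(\cos\theta + m_1) - m_3^2$. Hence $\bigl\lvert |m_2|^2 - \sin^2\theta\bigr\rvert \lesssim |m_1 - \cos\theta| + m_3^2$, and since $|m_2| + \sin\theta \geq \sin\theta > 0$ (using $\theta \in (0,\pi)$), we get $\bigl\lvert |m_2| - \sin\theta\bigr\rvert \lesssim_\theta |m_1 - \cos\theta| + |m_3|^2 \le |m_1-\cos\theta| + |m_3|$ on the region where $|m_3|\le 1$, which is all of $\Omega$. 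Integrating and using the hypothesis $m'(\pm\infty,\cdot) = (\cos\theta,0)$ in the sense of \eqref{convent}---which says precisely that $m_1 - \cos\theta \in L^2(\Omega_\pm)$ and $m_3\in L^2(\Omega_\pm)$---together with $|m_3|\le 1$ to pass from $|m_3|^2$ to $|m_3|$ on, say, the unit-measure slabs near the wall and $L^2$ control far out, gives $\int_\Omega \bigl\lvert |m_2| - \sin\theta\bigr\rvert^2 < \infty$. (One has to be slightly careful: $|m_3|$ need not be in $L^1$ globally, but $|m_3|^2 \in L^1(\Omega_\pm)$ and $\bigl\lvert |m_2|-\sin\theta\bigr\rvert \lesssim |m_1 - \cos\theta| + |m_3|$ only where $|m_3| \leq 1$; since $|m_3| \leq |m| = 1$ everywhere this is fine, and we bound $\bigl\lvert|m_2|-\sin\theta\bigr\rvert^2 \lesssim |m_1-\cos\theta|^2 + |m_3|^2$ directly, which is integrable.)

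For the second part, the content is to fix the \emph{sign} of $m_2$ at $\pm\infty$. From the first part we know $|m_2|$ is $L^2$-close to the constant $\sin\theta$ on each half $\Omega_\pm$; the remaining issue is whether $m_2$ itself is close to $+\sin\theta$ or to $-\sin\theta$ there. I would work with the $x_3$-average $\bar m_2(x_1) = \dashint_{-1}^1 m_2(x_1,x_3)\,dx_3$. A one-dimensional Poincaré/trace estimate in the $x_3$-variable, combined with $\int_\Omega|\nabla m|^2 < \infty$, shows that $m_2(x_1,\cdot) - \bar m_2(x_1)$ is small in $L^2((-1,1))$ for $|x_1|$ large (more precisely $\int_{\Omega_\pm} |m_2 - \bar m_2|^2 \lesssim \int_{\Omega_\pm}|\partial_{x_3}m_2|^2 < \infty$, and the tail of this integral vanishes), so it suffices to analyze the scalar function $\bar m_2$. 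From $\int \bigl\lvert |m_2| - \sin\theta\bigr\rvert^2 < \infty$ and the averaging one deduces that $\bar m_2(x_1)^2 \to \sin^2\theta$ as $x_1 \to \pm\infty$ along a full-measure set, hence (using $\bar m_2 \in \dot H^1_{\mathrm{loc}}$ with $\int|\bar m_2'|^2 \lesssim \int|\nabla m|^2 < \infty$, so $\bar m_2$ is uniformly continuous and has limits at $\pm\infty$ in the averaged sense) $\bar m_2(\pm\infty) \in \{+\sin\theta, -\sin\theta\}$. The sign-change hypothesis \eqref{eq:signu} on $\bar m_2$ forces $\limsup_{x_1\to-\infty}\bar m_2 \le 0$ and $\liminf_{x_1\to+\infty}\bar m_2 \ge 0$; combined with the dichotomy $\bar m_2(\pm\infty) = \pm\sin\theta$, this pins down $\bar m_2(-\infty) = -\sin\theta$ and $\bar m_2(+\infty) = +\sin\theta$. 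Then $m_2(\pm\infty,\cdot) = \pm\sin\theta$ in the sense of \eqref{convent} follows by combining $\int_{\Omega_\pm}|m_2 - \bar m_2|^2 < \infty$ with $\int_{\Omega_\pm}|\bar m_2 \mp \sin\theta|^2\,dx < \infty$ (the latter because $\bar m_2$ is continuous, bounded, and converges to $\pm\sin\theta$, while simultaneously $\bigl\lvert|\bar m_2| - \sin\theta\bigr\rvert \le \dashint\bigl\lvert|m_2|-\sin\theta\bigr\rvert$ is $L^2$ in $x_1$). Together with the hypothesis $m'(\pm\infty,\cdot) = (\cos\theta,0)$ this gives $m(\pm\infty,\cdot) = (\cos\theta, \pm\sin\theta, 0) = m^\pm_\theta$.

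The main obstacle is the second part: moving from control of $|m_2|$ to control of $m_2$, i.e.\ ruling out that $m_2$ oscillates between $+\sin\theta$ and $-\sin\theta$ infinitely often near $x_1 = +\infty$ (or $-\infty$), and making rigorous sense of ``$\bar m_2(\pm\infty)$'' given only $\dot H^1$-type control. The resolution is the observation that a transition of $\bar m_2$ between the two values $\pm\sin\theta$ costs a definite amount of Dirichlet energy $\gtrsim_\theta$ a constant, so with $\int|\nabla m|^2 < \infty$ only finitely many such transitions are possible; hence $\bar m_2$ is eventually of one sign near each end, and the sign-change condition selects which. Everything else---the pointwise trigonometric bound in Step 1, the $x_3$-Poincaré inequality, and the final assembly via \eqref{convent}---is routine, so I would keep those brief and spend the writing budget on the ``finitely many transitions'' argument and the careful interpretation of the limits in the sense of \eqref{convent}.
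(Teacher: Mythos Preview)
Your approach is essentially the paper's, and Part~1 is identical in spirit. Two remarks on Part~2:

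First, the inequality you write at the end, $\bigl\lvert |\bar m_2| - \sin\theta\bigr\rvert \le \dashint \bigl\lvert |m_2|-\sin\theta\bigr\rvert$, is false: take $m_2(x_1,\cdot)$ equal to $+\sin\theta$ on half of $(-1,1)$ and $-\sin\theta$ on the other half; then $\bar m_2=0$ but the right-hand side vanishes. The correct bound is the one the paper uses,
\[
\bigl\lvert |\bar m_2| - \sin\theta\bigr\rvert \le \bigl\lvert |\bar m_2| - |m_2|\bigr\rvert + \bigl\lvert |m_2|-\sin\theta\bigr\rvert \le |m_2-\bar m_2| + \bigl\lvert |m_2|-\sin\theta\bigr\rvert,
\]
and after averaging in $x_3$ the first term is controlled in $L^2(\R)$ by Poincar\'e--Wirtinger. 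This fix is needed, since the $L^2$ control of $\bar m_2 \mp \sin\theta$ on $\R_\pm$ (not just pointwise convergence) is what the convention \eqref{convent} requires.

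Second, your ``finitely many transitions'' argument is correct but heavier than necessary. Once you know $|\bar m_2|-\sin\theta\in L^2(\R)$ (from the corrected inequality above) and $|\bar m_2|\in \dot H^1(\R)$ (from $\|\,|\bar m_2|\,\|_{\dot H^1}=\|\bar m_2\|_{\dot H^1}\lesssim\|m_2\|_{\dot H^1}$), you have $|\bar m_2|-\sin\theta\in H^1(\R)$, hence $|\bar m_2(s)|\to\sin\theta$ as $|s|\to\infty$. This is the paper's route and replaces the energy-quantization step entirely: the sign hypothesis \eqref{eq:signu} then forces $\bar m_2(s)=|\bar m_2(s)|$ for $s\gg 1$ and $\bar m_2(s)=-|\bar m_2(s)|$ for $s\ll -1$, after which the $L^2$ bounds on $\R_\pm$ are immediate.
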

\medskip
\begin{rem}
\label{rem:excep}
\begin{enumerate}
  \item Note that the assumption $\theta \not\in\{0,\pi\}$ is crucial: If we consider $m\colon \Omega\to\mathbb{S}^2$ given by $m_3\equiv 0$,
  $$m_1(x)=\begin{cases} \cos (\frac \pi 3 x_1) &\textrm{ if } |x_1|\leq 1,\\ 
    1-\tfrac{\lvert x_1 \rvert}{1+x_1^2} &\textrm{ if } |x_1|> 1, \end{cases}\quad m_2(x) =\begin{cases} \sin (\frac \pi 3 x_1) &\textrm{ if } |x_1|\leq 1,\\ \sgn(x_1) \sqrt{1-m_1^2(x)} &\textrm{ if } |x_1|> 1, \end{cases}$$ then $m'(\pm\infty,\cdot)=(1, 0)$ (in the sense of \eqref{convent}), and $m\in\dot{H}^1(\Omega,\mathbb{S}^2)$ since
$$\int_\Omega \lvert \nabla m \rvert^2 dx=2\int_{\R} \frac{|\dds m_1|^2}{1-m_1^2}\, ds\leq C+4\int_{1}^\infty \! \frac{|\dds m_1|^2}{1-m_1}\, ds < \infty,$$ but $\int_\Omega m_2^2 \, dx = \infty$, so that \eqref{convent} fails for $m_2$. 

  \item Under the hypothesis of Lemma \ref{lem:modml2}, in the case $\theta\in \{0, \pi\}$, by Remark~\ref{rem_zero} one may still conclude that $m\in \{\pm {\bf e}_1\}$ provided that $\nabla\cdot (m'\mathbf{1}_\Omega)=0$ in ${\cal D}'(\R[2])$ (i.e. $m\in X_0$).
\end{enumerate}
\end{rem}
\begin{proof}[Proof of Lemma~\ref{lem:modml2}]
  By $m_1^2+m_2^2+m_3^2 = 1= \sin^2\theta+\cos^2\theta$ and the triangle inequality we have:
  \begin{align*}
    \MoveEqLeft \int_\Omega \lvert m_2^2 - \sin^2\theta \rvert^2 dx \leq 2 \int_\Omega \! \lvert m^2_1 - \cos^2\theta \rvert^2 + m_3^4 \, dx\\
    &\leq 8 \int_\Omega \! \lvert m_1 - \cos\theta \rvert^2 + m_3^2 \, dx < \infty,
  \end{align*}
  where we used $$|m_1^2 - \cos^2\theta| = \big|m_1 + \cos\theta\big| \, \big|m_1-\cos\theta \bigr|\leq 2 \big\lvert m_1-\cos\theta \bigr\rvert,$$ $m'(\pm\infty,\cdot)=(\cos \theta, 0)$ and $\lvert m_3 \rvert\leq 1$. Since $\lvert m_2^2 - \sin^2\theta\rvert = \bigl\lvert \lvert m_2 \rvert - \sin\theta \bigr\rvert \, \bigl \lvert \lvert m_2 \rvert + \sin\theta \bigr\rvert \geq \sin\theta \, \bigl\lvert \lvert m_2 \rvert - \sin\theta \bigr\rvert$ and $\theta \in (0,\pi)$, it follows that
  \begin{align*}
    \int_\Omega \bigl\lvert \lvert m_2\rvert - \sin\theta \bigr\rvert^2 dx \leq \tfrac{1}{\sin^2\theta} \int_\Omega \lvert m_2^2 - \sin^2\theta \rvert^2 dx < \infty.
  \end{align*}
This proves the first part of the lemma. To establish the second part, we note that due to $\bigl\lvert \lvert m_2 \rvert - \lvert \bar{m}_2 \rvert \bigr\rvert \leq\lvert m_2 - \bar{m}_2 \rvert$ we have
  \begin{align}\nonumber
    \int_{\R} \bigl\lvert \lvert \bar{m}_2 \rvert - \sin\theta \bigr\rvert^2 dx_1&=\frac 1 2  \int_{\Omega} \bigl\lvert \lvert \bar{m}_2 \rvert - \sin\theta \bigr\rvert^2 dx\\
& \label{eq:absm2bl2}     \leq \int_\Omega \lvert m_2 - \bar{m}_2 \rvert^2 + \bigl\lvert \lvert m_2 \rvert - \sin\theta \bigr\rvert^2 dx < \infty,
  \end{align}
  where we used the Poincar\'e-Wirtinger inequality
  \begin{align}\label{eq:pwirt}
    \int_{\R}\int_{-1}^1 \lvert m_2 - \bar{m}_2 \rvert^2\, dx_3 \, dx_1 \leq C \int_{\Omega} |\partial_{x_3} m_2|^2\, dx.
  \end{align}
Since $\||\bar{m}_2 |\|_{\dot{H}^1(\R)}=\|\bar{m}_2 \|_{\dot{H}^1(\R)}\leq \frac 1{\sqrt{2}} \|{m}_2 \|_{\dot{H}^1(\Omega)}<\infty$, we deduce with help of \eqref{eq:absm2bl2} that
$|\bar{m}_2 |-\sin \theta\in {H}^1(\R)$; in particular,
$\lvert \bar{m}_2(s)\rvert \to \sin\theta>0$ as $\lvert s \rvert \to \infty$.

  Under the additional assumption $\liminf_{s\tou\infty} \bar{m}_2(s)\geq 0$ and $\limsup_{s\tod-\infty} \bar{m}_2(s)\leq 0$, we deduce from $\lvert \bar{m}_2(s)\rvert \to \sin\theta>0$ as $\lvert s \rvert \to \infty$ that $\lvert \bar{m}_2(s) \rvert = \bar{m}_2(s)$ and $\lvert \bar{m}_2(-s) \rvert = -\bar{m}_2(-s)$ if $s$ is sufficiently large, so that \eqref{eq:absm2bl2} translates into
  \begin{align*}
    \int_{\R_-} \lvert \bar{m}_2+\sin\theta \rvert^2 dx_1 + \int_{\R_+} \lvert \bar{m}_2-\sin\theta \rvert^2 dx_1 < \infty.
  \end{align*}
  Together with \eqref{eq:pwirt}, this finally yields
  \begin{align*}
    \MoveEqLeft\int_{\Omega_-} \lvert m_2 +\sin\theta \rvert^2 dx + \int_{\Omega_+} \lvert m_2 - \sin\theta \rvert^2 dx\\
    &\leq2 \int_{\Omega} \lvert m_2 - \bar{m}_2 \rvert^2 dx + 4\int_{\R_-} \lvert \bar{m}_2 + \sin\theta \rvert^2 dx_1 + 4\int_{\R_+} \lvert \bar{m}_2 - \sin\theta \rvert^2 dx_1 < \infty.
  \end{align*}
\end{proof}

We now prove Proposition~\ref{prop:compactness1}. In fact, we shall prove it in form of the following proposition that treats all the cases at once: (i) corresponds to $\eta_k \tod 0$, (ii) corresponds to $\eta_k \equiv \eta\in(0,1)$, and (iii) corresponds to $\eta_k \equiv 0$.

\medskip
\begin{prop}\label{prop:compactness2}
  Suppose that the sequences $\{\theta_k\}_{k\uparrow\infty} \subset (0,\pi)$, $\{\eta_k\}_{k\uparrow\infty}\subset[0,1)$ satisfy
$ \theta_k \to \theta$ and $\eta_k\to\eta$ as $k\to \infty$
  \begin{align}\label{eq:convtheta}
    \text{ with } \theta\in(0,\pi) \text{ whenever }\eta\in(0,1).
  \end{align}
  Suppose further that the sequence $\{m_k\}_{k\uparrow\infty} \subset \dot{H}^1(\Omega,\mathbb{S}^2)$ satisfies
  \begin{align}\label{eq:spaceformk}
    m_k \in \left\{
    \begin{aligned}
      &X^{\theta_k}, &\text{for }\eta_k\in(0,1),\\
      &X_0 \cap X^{\theta_k}, 
      &\text{for }\eta_k = 0,
    \end{aligned}
    \right.
  \end{align}
  and
  \begin{align}\label{eq:energybd}
    \left\{
    \begin{aligned}
      &E_{\eta_k}(m_k), &\text{for }\eta_k>0,\\
      &E_0(m_k), &\text{for }\eta_k = 0,
    \end{aligned}
    \right\}
    \text{ is bounded as } k\to \infty.
  \end{align}

  Then there exist zeros $x_{1,k}$ of $\bar{m}_{2,k}$ such that after passage to a subsequence, there exists $m\in\dot{H}^1(\Omega,\mathbb{S}^2)$ such that
  \begin{gather}
    m_k(\cdot+x_{1,k},\cdot) \wto m \quad \text{weakly in }\dot{H}^1(\Omega)\text{ and weak-$*$ in }L^\infty(\Omega),\label{eq:convmk}\\
    h(m_k) \left\{\begin{aligned} &\wto h(m), &\text{for }\eta\in(0,1),\\&\to 0, &\text{for }\eta=0,\end{aligned}\right\} \text{ in }L^2(\Omega),\notag
    \intertext{and}
    m\in \left\{\begin{aligned}
      &X^\theta, &\text{for }\eta\in(0,1),\\
      &X_0 \cap X^{\tilde{\theta}}, &\text{for }\eta = 0,
    \end{aligned}\right.\notag
  \end{gather}
  with $\tilde{\theta}\in [0, \pi]$.\footnote{One might have that $\tilde{\theta}\neq \theta$, see Remark \ref{rem_doi} (ii).}
\end{prop}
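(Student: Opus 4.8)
Here is the proof strategy I would follow. The idea is to fix the right translations by applying the one-dimensional sign-change Lemma~\ref{lem:sgnchg} to the $x_3$-averages $\bar m_{2,k}$, then extract a weak $\dot H^1$-limit by standard compactness, and finally transfer the boundary behaviour at $x_1=\pm\infty$ to the limit by means of Lemma~\ref{lem:modml2}.

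\textbf{Step 1 (translations).} In all three cases \eqref{eq:energybd} bounds the exchange energy, $\int_\Omega|\nabla m_k|^2\,dx\le C$, so by the Poincar\'e--Wirtinger inequality \eqref{eq:pwirt} the continuous functions $u_k:=\bar m_{2,k}\in\dot H^1(\mathbb R)$ satisfy $\limsup_k\int_{\mathbb R}|\dds u_k|^2\,ds<\infty$, i.e.\ \eqref{eq:nubdd}. Since $m_k\in X^{\theta_k}$ forces $m_{2,k}(\pm\infty,\cdot)=\pm\sin\theta_k$ in the sense of \eqref{convent}, a Jensen estimate gives $\bar m_{2,k}\mp\sin\theta_k\in H^1(\mathbb R_\pm)$ and hence $u_k(s)\to\pm\sin\theta_k$ as $s\to\pm\infty$; as $\theta_k\in(0,\pi)$, i.e.\ $\sin\theta_k>0$, this is precisely \eqref{eq:sgnuinfty}. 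Lemma~\ref{lem:sgnchg} then yields zeros $x_{1,k}$ of $\bar m_{2,k}$ and a subsequence along which $\bar m_{2,k}(\cdot+x_{1,k})\to v$ locally uniformly and weakly in $\dot H^1(\mathbb R)$, with $v(0)=0$ and $v$ satisfying \eqref{eq:signu}. Replacing $m_k$ by $m_k(\cdot+x_{1,k},\cdot)$ — harmless, since the energy, $X^{\theta_k}$ and $X_0$ are invariant under $x_1$-translations — we may assume $x_{1,k}=0$.

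\textbf{Step 2 (weak compactness and stray field).} From $\|\nabla m_k\|_{L^2(\Omega)}\le C$ and $|m_k|\equiv1$ I extract a further subsequence with $m_k\wto m$ weakly in $\dot H^1(\Omega)$ and weak-$*$ in $L^\infty(\Omega)$; compactness of $H^1\hookrightarrow L^2$ on each bounded Lipschitz subdomain $(-R,R)\times(-1,1)$ gives $m_k\to m$ in $L^2_{\mathrm{loc}}(\Omega)$ and a.e., so $|m|=1$ a.e.\ and $m\in\dot H^1(\Omega,\mathbb S^2)$. In particular $\bar m_{2,k}\to\bar m_2$ in $L^2_{\mathrm{loc}}(\mathbb R)$, which with the uniform $\tfrac12$-H\"older bound forces $v=\bar m_2$; thus $\bar m_2$ satisfies \eqref{eq:signu}. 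For the stray field: if $\eta_k=0$ then $h(m_k)=0$; if $\eta_k\to\eta\in(0,1)$ then $\ln\tfrac1{\eta_k}$ is bounded away from $0$ and $\infty$, so $\int_{\mathbb R^2}|h(m_k)|^2\,dx\le C$ and $h(m_k)\wto g$ in $L^2(\mathbb R^2)$ along a subsequence; if $\eta_k\to0$ with $\eta_k>0$ then $\ln\tfrac1{\eta_k}\to\infty$ forces $\int_{\mathbb R^2}|h(m_k)|^2\,dx\to0$. Letting $k\to\infty$ in $\int_{\mathbb R^2}m_k'\cdot\nabla\phi\,dx=-\int_{\mathbb R^2}h(m_k)\cdot\nabla\phi\,dx$ for $\phi\in C_c^\infty(\mathbb R^2)$ (using the $L^2_{\mathrm{loc}}$ convergence of $m_k$) together with $\nabla\times h(m_k)=0$ identifies, by uniqueness of the stray field, the limit as $g=h(m)$ when $\eta\in(0,1)$, and shows $\nabla\cdot(m'\mathbf 1_\Omega)=0$ in $\mathcal D'(\mathbb R^2)$, i.e.\ $h(m)=0$, when $\eta=0$.

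\textbf{Step 3 (boundary conditions at $\pm\infty$).} If $\eta\in(0,1)$, then \eqref{eq:convtheta} gives $\theta\in(0,\pi)$, and moreover every $\eta_k>0$ with $E_{\eta_k}(m_k)<\infty$ forces $\theta_k=\alpha$ (else $m_{1,k}-\cos\alpha\not\to0$ and the anisotropy term is infinite), whence $\theta=\alpha$ and, using $\eta_k\to\eta>0$, the uniform bound $\int_\Omega(m_{1,k}-\cos\alpha)^2+m_{3,k}^2\,dx\le C$; by weak lower semicontinuity $\int_\Omega(m_1-\cos\alpha)^2+m_3^2\,dx<\infty$, i.e.\ $m'(\pm\infty,\cdot)=(\cos\alpha,0)$. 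Since $\alpha\in(0,\pi)$ and $\bar m_2$ satisfies \eqref{eq:signu}, Lemma~\ref{lem:modml2} gives $m(\pm\infty,\cdot)=m^\pm_\alpha$, i.e.\ $m\in X^\theta$. If $\eta=0$, then $\nabla\cdot(m'\mathbf 1_\Omega)=0$ defines via \eqref{eq:defthetam} a unique $\tilde\theta:=\theta_m\in[0,\pi]$ with $\bar m_1\equiv\cos\tilde\theta$ and, by the remark following \eqref{eq:defthetam}, $m'(\pm\infty,\cdot)=(\cos\tilde\theta,0)$; if $\tilde\theta\in(0,\pi)$, Lemma~\ref{lem:modml2} (using \eqref{eq:signu} for $\bar m_2$) yields $m(\pm\infty,\cdot)=m^\pm_{\tilde\theta}$, while if $\tilde\theta\in\{0,\pi\}$, Remark~\ref{rem_zero} yields $m=\pm\mathbf e_1$; in either case $m\in X_0\cap X^{\tilde\theta}$. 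Here $\tilde\theta$ need not equal $\theta$, since when some $\eta_k>0$ the weak limit erases the tails along which $m_{1,k}$ interpolates between $\cos\tilde\theta$ and $\cos\alpha$ (cf.\ Remark~\ref{rem_doi}).

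The routine parts are the weak $\dot H^1$-compactness of Step 2 and the identification of the stray field. I expect the crux to be the coupling between Steps 1 and 3: one must choose the translations so that the \emph{sign change} of $\bar m_{2,k}$ at $\pm\infty$ persists in the limit — exactly what Lemma~\ref{lem:sgnchg} provides — and then verify that this surviving sign change, combined with only the (possibly shifted) angle information $m'(\pm\infty,\cdot)=(\cos\tilde\theta,0)$, already pins down $m(\pm\infty,\cdot)=m^\pm_{\tilde\theta}$; this is where Lemma~\ref{lem:modml2} and the dichotomy $\tilde\theta\in(0,\pi)$ versus $\tilde\theta\in\{0,\pi\}$ (Remark~\ref{rem_zero}) do the work.
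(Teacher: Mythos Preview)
Your proof is correct and follows the paper's approach: Lemma~\ref{lem:sgnchg} applied to $\bar m_{2,k}$ fixes the translations, standard weak $\dot H^1$-compactness plus Rellich gives the limit $m$ and identifies $\bar m_2$ with the sign-changing limit, and Lemma~\ref{lem:modml2} (respectively Remark~\ref{rem_zero} when $\tilde\theta\in\{0,\pi\}$) recovers the boundary behaviour at $x_1=\pm\infty$. Two cosmetic points: the bound $\int_{\mathbb R}|\tfrac{d}{dx_1}\bar m_{2,k}|^2\,dx_1\le C$ comes from Jensen's inequality rather than \eqref{eq:pwirt}, and the passage to the limit in the anisotropy term uses Fatou via the a.e.\ convergence from Step~2 rather than ``weak lower semicontinuity''; on the other hand, your explicit observation that $\eta_k>0$ together with $E_{\eta_k}(m_k)<\infty$ forces $\theta_k=\alpha$ (hence $\theta=\alpha$) is a useful clarification that the paper leaves implicit when it writes $\cos\theta_k$ in its Step~3.
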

\begin{proof}[Proof of Proposition~\ref{prop:compactness2}] 
We divide the proof in several steps:

\textbf{Step 1:} {\it Compactness of translates of averages $\{\bar{m}_{2,k}\}$}. According to \eqref{eq:energybd}, we have that
  \begin{gather*}
    \int_{\R} \lvert \tfrac{d}{dx_1} \bar{m}_{2,k} \rvert^2 dx_1    \quad\text{is bounded for $k\uparrow\infty$,}
  \end{gather*}
  i.e. \eqref{eq:nubdd} for $\bar{m}_{2,k}$. From \eqref{eq:spaceformk} we obtain
  \begin{gather*}
    \int_{\R_-} \lvert \bar{m}_{2,k} + \sin\theta_k \rvert^2 dx_1 + \int_{\R_+} \lvert \bar{m}_{2,k} - \sin\theta_k \rvert^2 dx_1<\infty \quad\text{for each $k\in\N$}.
  \end{gather*}
  Since $\theta_k \in (0,\pi)$, this implies in particular \eqref{eq:sgnuinfty} for $\bar{m}_{2,k}$. Hence by Lemma~\ref{lem:sgnchg}, there exist zeros $x_{1,k}$ of $\bar{m}_{2,k}$ and $u\in\dot{H}^1(\R)\cap C(\R)$ s.t. for a subsequence
  \begin{align*}
    \bar{m}_{2,k}(\cdot+x_{1,k},\cdot) \wto u \quad \text{weakly in }\dot{H}^1(\R) \text{ and locally uniformly},
  \end{align*}
  with $u$ satisfying \eqref{eq:signu}.

\medskip

 \textbf{Step 2:} {\it Convergence of $\{m_k(\cdot+x_{1,k},\cdot)\}$}. Because of \eqref{eq:energybd}, by standard weak-compactness results, there exists $m\in\dot{H}^1(\Omega)\cap L^\infty(\Omega)$ s.t. for a subsequence
  \begin{align*}
    m_k(\cdot+x_{1,k},\cdot)\wto m \quad \text{weakly in $\dot{H}^1(\Omega)$ and weak-$*$ in $L^\infty(\Omega)$}.
  \end{align*}
By Rellich's compactness result,
  \begin{align*}
    m_k(\cdot+x_{1,k},\cdot) \to m \quad \text{in }L^2_\text{loc}(\Omega) \text{ and a.e.},
  \end{align*}
  so that in particular $m_k \in \dot{H}^1(\Omega,\mathbb{S}^2)$ yields $m\in \dot{H}^1(\Omega,\mathbb{S}^2)$. We thus may identify $u$ as $\bar{m}_2$, i.e.
  $$\bar{m}_2\equiv u \quad \textrm{in} \quad \R,$$
  so that $\bar{m}_2$ satisfies \eqref{eq:signu}.
  
  To simplify notation, we identify $m_k$ with its translate $m_k(\cdot+x_{1,k},\cdot)$ in the sequel of the proof.

 \medskip

 \textbf{Step 3:} {\it If $\eta\in(0,1)$, we show that $m\in X^\theta$ and compactness of $\{h(m_k)\}$.} Indeed, in this case, \eqref{eq:energybd} yields in particular
  \begin{align*}
   \bigg\{ \int_\Omega \lvert m_{1,k} - \cos\theta_k \rvert^2 + m_{3,k}^2 \, dx\bigg\}_k \quad\text{is bounded as $k\tou\infty$},
  \end{align*}
  so that Fatou's lemma and Step 2 lead to:
  \begin{align*}
    \int_\Omega \lvert m_1-\cos\theta \rvert^2 + m_3^2\, dx < \infty,
  \end{align*}
  that is, $m'(\pm\infty,\cdot) = (\cos\theta,0)$. Since by assumption \eqref{eq:convtheta}, $\theta\in(0,\pi)$, and since $\bar{m}_2$ satisfies \eqref{eq:signu}, Lemma~\ref{lem:modml2} yields $m(\pm\infty,\cdot)=(\cos\theta,\pm\sin\theta,0)$. Hence, we indeed have $m\in X^\theta$. For proving compactness of stray fields, we note that \eqref{eq:energybd} yields in particular
  \footnote{Note that $h(m(\cdot+z_{1},\cdot))\equiv h(m)(\cdot+z_{1},\cdot)$ by uniqueness of $L^2$ stray-fields in \eqref{eq:maxwell_2D} associated to configurations satisfying \eqref{eq:bcinfty}. }
  \begin{align*}
   \bigg\{ \int_{\R[2]} \lvert h(m_k) \rvert^2\bigg\}_k \quad\text{is bounded as $k\tou\infty$}.
  \end{align*}
  Hence there exists $h\in L^2(\Omega)$ s.t. for a subsequence
  \begin{align*}
    h(m_k) \wto h \quad\text{weakly in }L^2(\R[2]).
  \end{align*}
  Passing to the limit in the distributional formulation $\nabla \cdot \bigl( h(m_k) + m_k' \mathbf{1}_\Omega \bigr) = 0$, $\nabla\times h(m_k)=0$ to obtain $\nabla\cdot\bigl( h+m'\mathbf{1}_\Omega\bigr)=0$, $\nabla\times h =0$ in $\mathcal{D}'(\R[2])$, and using uniqueness of the stray-field of $m$ with \eqref{eq:bcinfty}, we learn that $h=h(m)$.

 \medskip

 \textbf{Step 4:} {\it If $\eta=0$, we show that $h(m_k) \to 0$ in $L^2(\R[2])$ and $m\in X_0\cap X^{\tilde{\theta}}$ for some $\tilde
 {\theta}\in [0, \pi]$.} Indeed, in this case, \eqref{eq:energybd} yields in particular (recall that $m_k\in X_0$ yields $h(m_k)\equiv 0$):
  \begin{align*}
    \int_{\R[2]} \lvert h(m_k) \rvert^2 dx \to 0,
  \end{align*}
  so that passing to the limit in the distributional formulation $\nabla \cdot \bigl( h(m_k) + m_k' \mathbf{1}_\Omega \bigr) = 0$ we learn that $\nabla \cdot (m'\mathbf{1}_\Omega) = 0$ in $\mathcal{D}'(\R[2])$. Since $m\in\dot{H}^1(\Omega)\cap L^\infty(\Omega)$, this yields
  \begin{align}\label{eq:mdivfree}
    \nabla \cdot m' = 0 \text{ in }\Omega, \, m_3=0 \text{ on }\partial\Omega.
  \end{align}
On the other hand, $\nabla\cdot (m'\mathbf{1}_\Omega)=0$ in $\mathcal{D}'(\R[2])$ implies $\frac{d}{dx_1} \bar{m}_1=0$ on $\R$, so that there exists $\tilde{\theta}\in[0,\pi]$ with
  \begin{align}\label{eq:valm1b}
    \bar{m}_1 = \cos\tilde{\theta} \quad\text{on }\R.
  \end{align}
  We note that in general, $\tilde{\theta} \neq \theta = \lim_{\theta\uparrow\infty} \theta_k$. By the Poincar\'e-Wirtinger inequality in $x_3$ we obtain from \eqref{eq:valm1b}:
  \begin{align}\label{poinc1}
    \int_\Omega \lvert m_1 - \cos\tilde{\theta} \rvert^2 dx \leq C \int_\Omega \lvert \partial_{x_3} m_1 \rvert^2 dx <\infty.
  \end{align}
  By the Poincar\'e inequality in $x_3$, we obtain from \eqref{eq:mdivfree}:
  \begin{align}\label{poinc3}
    \int_\Omega \! m_3^2 \, dx \leq C\int_\Omega \lvert \partial_{x_3} m_3 \rvert^2 dx < \infty.
  \end{align}
  Hence we have $m'(\pm\infty,\cdot)=(\cos\tilde{\theta},0)$. To conclude, we distinguish two cases:
  
  \medskip
  
\nd {\it Case 1:  $\tilde{\theta} \in (0,\pi)$}. In this case, we may conclude by Lemma~\ref{lem:modml2} that $m(\pm\infty,\cdot)=(\cos\tilde{\theta},\pm\sin\tilde{\theta},0)$ as in case of $\eta\in(0,1)$. We thus obtain $m\in X_0\cap X^{\tilde{\theta}}$.

\medskip

\nd {\it Case 2: $\tilde{\theta}\in\{0,\pi\}$}. In this case, we apply Remark~\ref{rem_zero} to conclude from \eqref{eq:valm1b} that $m$ is one of the constant functions $\pm \mathbf{e}_1$ and thus trivially lies in $X_0\cap X^{\tilde{\theta}}$.
\end{proof}

\bigskip

\begin{rem}
\label{rem_doi}
\begin{enumerate}

\item The assumption $\theta\in(0,\pi)$ whenever $\eta\in(0,1)$ in \eqref{eq:convtheta} is due to Remark~\ref{rem:excep}, since in general the condition $m_2(\pm\infty,\cdot)=\pm\sin\theta$ fails if $\theta\in\{0,\pi\}$. However, if $\theta\in\{0,\pi\}$, one gets a weaker statement concerning the behavior of $\bar{m}_2$ at $\pm\infty$:

  {\bf Claim}. {\it Suppose that the sequences $\{\theta_k\}_{k\uparrow\infty}\subset(0,\pi)$, $\{\eta_k\}_{k\uparrow\infty}\subset(0,1)$ satisfy 
  \begin{align*} 
  \theta_k \to \theta \text{ and }\eta_k\to\eta \text{ with }\theta\in\{0,\pi\}\text{ and }\eta\in(0,1).
  \end{align*}
  Consider a sequence $\{m_k\}_{k\uparrow\infty}\subset \dot{H}^1(\Omega,\mathbb{S}^2)$ for which $m_k\in X^{\theta_k}$ and $\{E_{\eta_k}(m_k)\}$ is bounded. Then there exists $m\in\dot{H}^1(\Omega,\mathbb{S}^2)$ such that after passage to a subsequence:\footnote{No translation in $x_1$-direction is required here.}
  \begin{align*}
    m_k\wto m &\quad \text{weakly in $\dot{H}^1(\Omega)$ and weak-$*$ in $L^\infty(\Omega)$},\\
    h(m_k) \wto h(m) &\quad \text{weakly in }L^2(\R[2]),\\
    E_\eta(m) < \infty,&\\
    \bar{m}_2(x_1) \to 0 &\quad \text{as } \lvert x_1 \rvert \to \infty.
  \end{align*}
  }

  Indeed, we can essentially proceed as in the proof of Steps 1,2 and 3 in Proposition~\ref{prop:compactness2}. However, note that there is no need to apply Lemma~\ref{lem:sgnchg}; moreover, the application of Lemma~\ref{lem:modml2} (at Step 3) is no longer possible. Instead, note that $\bar{m}_1-\cos\theta,\, \bar{m}_3\in H^1(\R)$ yields $\lim_{\lvert x_1 \rvert\uparrow\infty} \bar{m}_1(x_1)=\cos\theta\in\{\pm 1\}$ and $\lim_{\lvert x_1 \rvert \uparrow\infty} \bar{m}_3(x_1)=0$. Therefore,
  \begin{align*}
    1=\limsup_{\lvert x_1\rvert\uparrow \infty} \dashint_{-1}^1 \lvert m(x) \rvert \, dx_3 \geq \limsup_{\lvert x_1 \rvert \uparrow \infty} \Bigl\lvert \bigl(\bar{m}_1, \bar{m}_2, \bar{m}_3\bigr)(x_1)\Bigr\rvert=\Bigl\lvert\bigl(1, \limsup_{\lvert x_1 \rvert \uparrow \infty} \lvert \bar{m}_2(x_1)\rvert, 0\bigr)\Bigr\rvert,
  \end{align*}
  i.e., $\lim_{\lvert x_1 \rvert\uparrow \infty} \bar{m}_2(x_1)=0$. \qed

\item Note that in the case $\eta=0$, the angle $\tilde{\theta}=\theta_m$ associated to the limiting configuration $m$ via \eqref{eq:defthetam} in general does not coincide with the limit $\theta$ of the sequence $\theta_k$. In particular, in the situation of Proposition~\ref{prop:compactness1} (i) for $\theta_k\equiv \alpha = \theta$, the limit angle $\tilde{\theta}=\theta_m$ describes the amount of asymmetric rotation in the wall core. Hence, the possibility of having $\tilde{\theta}\neq\theta$ is directly related to observing a non-trivial behavior of the reduced model \eqref{eq:reducedmodel}.

  However, there are also cases in which $\theta=\lim_k \theta_k$ coincides with the limit angle $\tilde{\theta}$, as can be seen in the statement of Proposition \ref{prop:compactness1} (iii).

\end{enumerate}
\end{rem}

\bigskip

\begin{proof}[Proof of Proposition~\ref{prop:compactness1}:]
  Statements (i) and (ii) are an immediate consequence of Proposition~\ref{prop:compactness2} by letting $\theta_k\equiv\alpha$.

  Statement (iii) follows from Remark~\ref{rem_zero}, if there exists a constant subsequence $\theta_k\in\{0,\pi\}$. Otherwise, we find a convergent subsequence $\{\theta_k\}_{k\uparrow\infty}\subset(0,\pi)$ to which we apply Proposition~\ref{prop:compactness2} with $\eta_k\equiv0$. In this latter case, not relabeling the subsequence, it remains to prove that the limit $\theta \defas \lim_{k\uparrow \infty} \theta_k$ satisfies $\theta = \theta_m$, i.e., $m\in X^\theta$.  Indeed, exploiting \eqref{eq:defthetam} and \eqref{eq:convmk}, one obtains
  \begin{align*}
    \cos\theta \leftarrow \cos\theta_k \equiv \bar{m}_{1,k}(\cdot+x_{1,k}) \to \bar{m}_1 \equiv \cos\theta_m \quad \text{as }k\uparrow\infty.
  \end{align*}
  Since $\theta,\,\theta_m\in[0,\pi]$, this yields $\theta=\theta_m$.
\end{proof}

\subsection{Existence of minimizers}
\label{sec:min}
Due to the compactness statements in Proposition~\ref{prop:compactness1}, one obtains existence of minimizers for $E_\eta$, $\EA(\theta)$ and $E_0$.
\medskip
\begin{thm}\label{thm:existence}\quad
\begin{itemize}
\item For fixed parameters $\eta\in(0,1)$ and $\alpha \in (0,\tfrac{\pi}{2}]$, there exists a minimizer of $E_\eta$ over the set $X^\alpha$.
\item For $\theta \in [0, {\pi}]$ fixed, there exists a minimizer of $\EA(\theta)$ over the (non-empty, cf. Appendix) set $m\in X_0$ with $\theta_m=\theta$.
\item The $\Gamma$-limit energy $E_0$ admits a minimizer over $X_0$. The optimal angle $\theta$ in the minimization problem \eqref{eq:reducedmodel} is attained.
\end{itemize}
\end{thm}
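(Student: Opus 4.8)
The plan is to run the direct method of the calculus of variations separately for each of the three functionals; the only non-routine ingredient is Proposition~\ref{prop:compactness1}, which ensures that weak $\dot H^1$-limits of minimizing sequences -- after a suitable translation in $x_1$ -- still satisfy the boundary conditions at $x_1=\pm\infty$. Apart from that, one only has to check weak lower semicontinuity of the energies.

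\emph{Existence of a minimizer of $E_\eta$ over $X^\alpha$.} Fix $\eta\in(0,1)$ and $\alpha\in(0,\tfrac\pi2]$. The set $X^\alpha$ is non-empty (it contains, e.g., a smooth monotone one-dimensional profile interpolating $m^-_\alpha$ and $m^+_\alpha$) and $E_\eta\geq 0$, so any minimizing sequence $\{m_k\}\subset X^\alpha$ satisfies $\sup_k E_\eta(m_k)<\infty$. Since $E_\eta$ is invariant under translations in $x_1$ (also the stray-field term, by uniqueness of $L^2$ stray fields), Proposition~\ref{prop:compactness1}~(ii) yields, after passing to a subsequence and translating, $m_k\wto m$ weakly in $\dot H^1(\Omega)$ with $m\in X^\alpha$ and $h(m_k)\wto h(m)$ in $L^2(\R[2])$. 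The exchange term and the stray-field term are then weakly lower semicontinuous (weak lower semicontinuity of the $L^2$-norm), while the anisotropy term passes to the $\liminf$ by Fatou's lemma, using $m_k\to m$ a.e.\ in $\Omega$ from Rellich's theorem. Hence $E_\eta(m)\leq\liminf_k E_\eta(m_k)=\inf_{X^\alpha}E_\eta$, so $m$ is a minimizer.

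\emph{Existence of a minimizer of $\EA(\theta)$, and of $E_0$ with the optimal angle attained.} Fix $\theta\in[0,\pi]$. If $\theta\in\{0,\pi\}$, Remark~\ref{rem_zero} forces every $m\in X_0$ with $\theta_m=\theta$ to be the constant $\pm\mathbf e_1$, whose exchange energy vanishes, so it trivially realizes the infimum $\EA(\theta)$. If $\theta\in(0,\pi)$, the set $\{m\in X_0:\theta_m=\theta\}=X_0\cap X^\theta$ is non-empty by the Appendix; a minimizing sequence $\{m_k\}$ of the exchange energy over this set has $\theta_{m_k}\equiv\theta$ and uniformly bounded exchange energy, so Proposition~\ref{prop:compactness1}~(iii) gives (after subsequence and translation) $m_k\wto m\in X_0$ with $\theta_m=\theta$, and weak lower semicontinuity of the exchange energy yields $\int_\Omega\lvert\nabla m\rvert^2\,dx\leq\liminf_k\int_\Omega\lvert\nabla m_k\rvert^2\,dx=\EA(\theta)$; since $m$ is admissible, $m$ attains $\EA(\theta)$. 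Next, $\theta\mapsto\EA(\theta)$ is lower semicontinuous on $[0,\pi]$: given $\theta_k\to\theta$ with $\liminf_k\EA(\theta_k)<\infty$, pass to a subsequence realizing the $\liminf$, pick minimizers $m_k$ of $\EA(\theta_k)$ (just constructed), and apply Proposition~\ref{prop:compactness1}~(iii) again to obtain $m_k\wto m\in X_0$ with $\theta_m=\theta$, whence $\EA(\theta)\leq\int_\Omega\lvert\nabla m\rvert^2\,dx\leq\liminf_k\EA(\theta_k)$. Since $\theta\mapsto\ES(\alpha-\theta)=2\pi(\cos\theta-\cos\alpha)^2$ is continuous, the function $g(\theta):=\EA(\theta)+\lambda\,\ES(\alpha-\theta)$ is lower semicontinuous on the compact interval $[0,\pi]$ and therefore attains its infimum at some $\theta^*$ -- this is exactly the assertion that the optimal angle in \eqref{eq:reducedmodel} is attained. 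Finally, letting $m^*$ be a minimizer of $\EA(\theta^*)$, for every $m\in X_0$ we have $E_0(m)=\int_\Omega\lvert\nabla m\rvert^2\,dx+\lambda\,\ES(\alpha-\theta_m)\geq\EA(\theta_m)+\lambda\,\ES(\alpha-\theta_m)\geq g(\theta^*)=E_0(m^*)$, so $m^*$ minimizes $E_0$ over $X_0$.

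\emph{Main obstacle.} All the genuine analytic difficulty is absorbed into Proposition~\ref{prop:compactness1} (and, through it, into the concentration-compactness Lemma~\ref{lem:sgnchg}): its role is precisely to restore the change of sign of $m_2$ at $\pm\infty$, hence the boundary conditions, which are in general not inherited by weak $\dot H^1$-limits. Once this is granted, the only points going beyond the textbook direct method are the weak convergence $h(m_k)\wto h(m)$ of the non-local stray fields (needed for the stray-field term) and the continuity of the angle-dependent quadratic term under $\theta_k\to\theta$; both are already built into Proposition~\ref{prop:compactness1}. What remains is thus a routine compactness-plus-lower-semicontinuity argument.
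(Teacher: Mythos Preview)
Your argument is correct and follows the same route as the paper: the direct method via Proposition~\ref{prop:compactness1}~(ii) and~(iii) for the first two items, then lower semicontinuity of $\theta\mapsto\EA(\theta)$ (the paper isolates this as Lemma~\ref{lem:lsceasym}, proved exactly as you do by applying Proposition~\ref{prop:compactness1}~(iii) to minimizers of $\EA(\theta_k)$), and finally minimization of the continuous perturbation $\EA(\theta)+\lambda\,\ES(\alpha-\theta)$ over the compact interval $[0,\pi]$. Your treatment is in fact more explicit than the paper's on several points (the separate handling of $\theta\in\{0,\pi\}$ via Remark~\ref{rem_zero}, and the passage from the optimal angle $\theta^*$ to the minimizer $m^*$ of $E_0$), but the structure is identical.
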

\begin{proof}
  Observe that the functionals $E_\eta$ and $\{m\mapsto \int_\Omega \lvert\nabla m \rvert^2 dx\}$ are lower-semicontinuous with respect to the weak convergence obtained in Proposition~\ref{prop:compactness1}. Hence, the first two statements in Theorem~\ref{thm:existence} follow immediately by the direct method in the calculus of variations, i.e. by applying the compactness results in Proposition~\ref{prop:compactness1} to minimizing sequences.
  
  For the third statement, we need an auxiliary lemma that we prove using the existence of minimizers of $E_\text{asym}(\theta)$ we have just shown:
\medskip
\begin{lem}\label{lem:lsceasym}
The map $\theta\in [0, \pi] \mapsto \EA(\theta)\in \R_+$ is lower semicontinuous.
\end{lem}
\begin{proof}[Proof of Lemma~\ref{lem:lsceasym}]
  This immediately follows from Proposition~\ref{prop:compactness1} (iii) by considering for each sequence $\{\theta_k\in [0, \pi]\}_{k\uparrow\infty}$, a sequence $\{m_k\in X_0\}_{k\uparrow\infty}$ of minimizers of $\EA(\theta_k)$ for each $k$.
\end{proof}

  Now, the third statement in Theorem~\ref{thm:existence} again follows by the direct method in the calculus of variations, since $E_0$ is just a continuous perturbation of $\EA$.
\end{proof}

\section{Proof of $\Gamma$-convergence}\label{sec:proof}
\subsection{Lower bound. Proof of Theorem \ref{thm:gammalimlb}}
To establish the lower bound \eqref{eq:lowerbd}, one has to estimate the exchange term in $E_\eta(m_\eta)$ as well as stray-field and anisotropy energy from below as $\eta\downarrow 0$. If $m$ is the limit of $m_\eta$ (in the weak $\dot{H}^1$-topology), then the exchange term will be estimated as $\eta\downarrow 0$ by $\int_\Omega \lvert \nabla m \rvert^2 dx$, while the stray-field and anisotropy energy will be estimated by $\lambda \ES(\alpha-\theta_m)$,
where $\theta_m$ is associated to $m\in X_0$ via \eqref{eq:defthetam}.

Let $C>0$ always denote a universal, generic constant.

W.l.o.g. we may assume $E_\eta(m_\eta) \leq C_0 < \infty$ for some $C_0>0$ and $m_\eta\to m$ in $L^2_\text{loc}(\Omega)$ and a.e. in $\Omega$
as $\eta\downarrow 0$.

\textbf{Step 1:} \textit{Exchange energy.} We first address estimating the exchange energy from below. Since $m_\eta \wto m$ in $\dot{H}^1(\Omega)$ as $\eta\tod 0$, we obviously have
\begin{align}\label{eq:exchangeestimate}
\int_\Omega \lvert \nabla m \rvert^2 dx \leq \liminf_{\eta \tod 0} \int_\Omega \lvert \nabla m_\eta \rvert^2 dx,
\end{align}
by weak lower-semicontinuity of the $L^2$ norm of $\{\nabla m_\eta\}_{\eta\downarrow 0}$.

\textbf{Step 2:} \textit{Choice of test function.} Now it remains to estimate both stray-field and anisotropy energy in $E_\eta(m_\eta)$ from below by $2\pi \, \lambda \, \bigl( \cos\theta_m - \cos\alpha \bigr)^2$.

Here the idea is to approximate the limit
\begin{align*}
\cos\theta_m - \cos\alpha = \dashint_{-1}^1 \dashint_{-1}^1 \! \bigl( m_1- \cos\alpha \bigr) \, dx_3 \, dx_1
\end{align*}
by $\dashint_{-1}^1 \dashint_{-1}^1 \! \bigl( m_{1,\eta} - \cos\alpha \bigr) \,dx_3 \, dx_1$ and to define a suitable test function $\zeta \colon \R[2] \to \R$, that captures the profile of the tails of a N\'eel wall (i.e. when $|x_1|\geq 1$) and has the property that
\begin{align*}
\dashint_{-1}^1 \dashint_{-1}^1 \! \bigl( m_{1,\eta} -\cos\alpha \bigr) \, dx_3 \, dx_1 = \dashint_{-1}^1 \dashint_{-1}^1 \! \bigl( m'_\eta - (\cos\alpha,0) \bigr) \cdot \nabla \zeta \, dx_3 \, dx_1.
\end{align*}
In this way, the stray-field energy will control $2\pi\,\lambda \,\bigl(\cos\theta_m-\cos\alpha\bigr)^2$. Note that the argument here is similar to the one used in \cite{ottocrossover02}.

\medskip
\begin{lem}\label{lem:zeta}
Let $a >1$ and $\zeta_0 \colon \R \to \R$ be the odd piecewise affine function defined by
\begin{align}\label{eq:innerzeta}
\zeta_0(x_1) \defas
\begin{cases}
x_1,                     & 0 < x_1 < 1,\\
1,                       & 1 \leq x_1 < a,\\
-\frac{1}{a}\,(x_1-2a),  & a \leq x_1 < 2a,\\
0,                       & 2a \leq x_1
\end{cases}
\end{align}
(see Figure \ref{fig:zeta}).
Let $\zeta \colon \R[2] \to \R$ be given by $\zeta(x_1,x_3) = \zeta_0(x_1)$ on $\Omega$ and harmonically extended to $\R[2]$ away from $\Omega$, i.e. $\zeta$ satisfies
\begin{gather}\label{eq:harmextzeta}
\left\{
\begin{aligned}
\Delta \zeta &= 0          & &\text{on } \R[2]\setminus \bar \Omega,\\
\zeta(\cdot, \pm 1) &= \zeta_{0} & &\text{on } \R.
\end{aligned}
\right.
\end{gather}
Then we have
\begin{align}
\label{new4}
\int_{\R[2]} \lvert \nabla \zeta \rvert^2 dx \leq \frac{8}{\pi} \, \ln a + C
\end{align}
for some constant $C = \co(1)$ as $a\tou\infty$.
\end{lem}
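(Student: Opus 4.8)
The plan is to split the integral $\int_{\R[2]} |\nabla\zeta|^2\,dx$ into the contribution from inside the strip $\Omega=\R\times(-1,1)$ and the contribution from the two half-planes $\R[2]\setminus\bar\Omega$, and to estimate each separately. Inside $\Omega$, since $\zeta(x_1,x_3)=\zeta_0(x_1)$ is independent of $x_3$, we simply have $\int_\Omega|\nabla\zeta|^2\,dx=2\int_\R|\zeta_0'(x_1)|^2\,dx_1$, and from the explicit piecewise-affine definition \eqref{eq:innerzeta} this is $2\bigl(1\cdot 1+\tfrac{1}{a^2}\cdot a\bigr)=2+\tfrac{2}{a}=\co(1)$ as $a\to\infty$. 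So the entire logarithmic growth must come from the harmonic extension to the exterior half-planes, and the heart of the proof is controlling $\int_{\R[2]\setminus\bar\Omega}|\nabla\zeta|^2\,dx$.

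For the exterior estimate I would use the variational characterization of the harmonic extension: among all functions on $\R^2_+:=\R\times(1,\infty)$ agreeing with $\zeta_0$ on the line $x_3=1$, the harmonic extension minimizes the Dirichlet energy, so it suffices to exhibit any competitor with the right energy bound — but actually the sharp constant $\tfrac{8}{\pi}$ suggests the authors want the exact value, which is cleanest via the $\dot H^{1/2}$ seminorm. Indeed, the minimal Dirichlet energy of the harmonic extension of a boundary trace $g$ to a half-plane equals $\frac{1}{2\pi}\int_\R\int_\R\frac{|g(s)-g(t)|^2}{|s-t|^2}\,ds\,dt$ (the $\dot H^{1/2}(\R)$-seminorm squared, up to the standard normalization), and by symmetry the two half-planes contribute the same; hence $\int_{\R[2]\setminus\bar\Omega}|\nabla\zeta|^2\,dx = 2\cdot\frac{1}{2\pi}[\zeta_0]_{\dot H^{1/2}(\R)}^2=\frac{1}{\pi}[\zeta_0]_{\dot H^{1/2}(\R)}^2$. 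So the problem reduces to showing $[\zeta_0]_{\dot H^{1/2}(\R)}^2\le 8\ln a+\co(1)$. Since $\zeta_0$ is piecewise linear with a "plateau" of width $a-1$ at height $1$ (flanked by ramps of slope $\pm1$ and $\mp\tfrac1a$), its Fourier transform or its difference quotient can be computed essentially explicitly; the leading term comes from the pair of jumps-in-derivative at the endpoints of the plateau, which are distance $\sim a$ apart. A clean way to see the $\ln a$: write $[\zeta_0]_{\dot H^{1/2}}^2=\int\int\frac{|\zeta_0(s)-\zeta_0(t)|^2}{|s-t|^2}\,ds\,dt$ and observe that on the region where $s\in(1,a)$-ish and $t$ far from it the integrand behaves like $1/|s-t|^2$ integrated over a plateau of length $a$, producing $\int_1^a\int \frac{ds\,dt}{|s-t|^2}\sim \ln a$; tracking the constant carefully (the plateau has value $1$ and appears "twice" — once for $s$ in the plateau, once for $t$) gives the factor $8$ after accounting for the $\frac1\pi$ and the factor $2$ from the two half-planes. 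Alternatively, and perhaps more in the spirit of the cited \cite{ottocrossover02}, one can avoid the nonlocal norm entirely and directly build an explicit competitor for the harmonic extension — e.g. a function that is $1$ on a large half-disk and interpolates logarithmically in an annulus — whose energy is manifestly $\le \tfrac{4}{\pi}\ln a+\co(1)$ per half-plane.

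Combining: $\int_{\R[2]}|\nabla\zeta|^2\,dx = \int_\Omega|\nabla\zeta|^2\,dx + \int_{\R[2]\setminus\bar\Omega}|\nabla\zeta|^2\,dx \le \co(1) + \tfrac{8}{\pi}\ln a + \co(1) = \tfrac{8}{\pi}\ln a + C$, which is \eqref{new4}. One technical point worth stating carefully is that $\zeta\in\dot H^1(\R[2])$ in the first place — i.e. that the harmonic extension has finite energy and that $\nabla\zeta\in L^2$ globally — which follows because $\zeta_0$ has compact support and the $\dot H^{1/2}$-seminorm of a compactly supported Lipschitz function is finite; this also legitimizes using $\zeta$ as a test function in the stray-field supremum later. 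The main obstacle is pinning down the sharp constant $\tfrac{8}{\pi}$: the crude "competitor" approach easily gives $\co(\ln a)$ but getting the coefficient exactly right requires either the precise $\dot H^{1/2}$ computation for the trapezoidal profile $\zeta_0$ or a carefully optimized logarithmic competitor in an annular region of the half-plane. Everything else — the interior estimate and the assembly — is routine bookkeeping.
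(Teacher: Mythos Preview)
Your overall strategy matches the paper's: split $\int_{\R[2]}|\nabla\zeta|^2$ into the strip $\Omega$ (contributing $2\int_\R|\zeta_0'|^2=\co(1)$) and the two half-planes (contributing $2\lVert\zeta_0\rVert_{\dot H^{1/2}}^2$ via the harmonic extension). The difference is in how the $\dot H^{1/2}$-seminorm is bounded. The paper does not compute the Gagliardo double integral; it uses the trace characterization $\lVert\zeta_0\rVert_{\dot H^{1/2}}^2=\min\bigl\{\int_{\R\times\R_+}|\nabla\bar\zeta|^2:\bar\zeta(\cdot,0)=\zeta_0\bigr\}$ and plugs in the explicit polar-separable competitor $\bar\zeta(r,\theta)=\zeta_0(r)\,(1-\tfrac{2}{\pi}\theta)$, which matches the trace on both rays $\theta=0,\pi$ precisely because $\zeta_0$ is odd. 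The only unbounded term is then $\int_0^\pi|\varphi'|^2\,d\theta\cdot\int_0^\infty\zeta_0(r)^2\,\tfrac{dr}{r}=\tfrac{4}{\pi}\bigl(\ln a+\co(1)\bigr)$, and the constant $\tfrac{8}{\pi}$ drops out with no bookkeeping.

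Your Gagliardo approach is viable --- the leading contribution indeed comes from the opposite plateaus $s\in(1,a)$, $t\in(-a,-1)$ with $|\zeta_0(s)-\zeta_0(t)|=2$, and after symmetrizing one gets $8\ln a+\co(1)$ --- but you would then have to check that all remaining blocks of the double integral (plateau vs.\ ramp, plateau vs.\ zero set, ramp vs.\ ramp, etc.) are $\co(1)$, which is straightforward but tedious. The paper's competitor short-circuits this. Your alternative ``$1$ on a half-disk, logarithmic in an annulus'' sketch does not respect the odd boundary data $\zeta_0$ and is not what the paper does. Minor slip: $\int_\R|\zeta_0'|^2=2(1+\tfrac1a)$ by oddness, so $\int_\Omega|\nabla\zeta|^2=4+\tfrac{4}{a}$; this of course does not affect the $\co(1)$ conclusion.
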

\medskip
\begin{rem} Problem
\eqref{eq:harmextzeta} can be solved explicitly via Fourier transform in the $x_1$-variable: 
\begin{align}\label{eq:defft}
  \bigl(\ft_{x_1} f\bigr)(k_1)\defas \tfrac{1}{\sqrt{2\pi}} \int_{\R} f(x_1) e^{-ik_1x_1} \,dx_1, \quad k_1\in \R, 
\end{align}
where $f \colon \R \to \R$.
In fact, \eqref{eq:harmextzeta} becomes a second-order ODE for $\ft_{x_1}(\zeta)$ in the $x_3$-variable. Imposing that $\zeta\in \dot{H}^1(\R[2]\setminus \Omega)$, we deduce:
\begin{align}\label{eq:outerzeta}
\ft_{x_1}\!\left(\zeta\right)(k_1,x_3) = \ft_{x_1}\!\left(\zeta_0\right)(x_1) \, e^{- \lvert k_1 \rvert \,(\lvert x_3 \rvert - 1)}, \quad k_1\in \R, \, |x_3|>1.
\end{align}
\end{rem}

% figure of test function \zeta_0
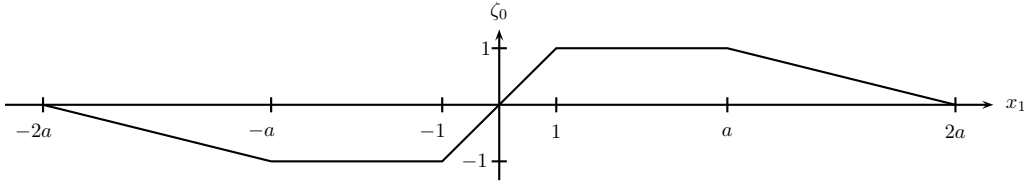
\begin{figure}[htbp]
\centering
\begin{pspicture*}(-0.5,0)(12.9,2.5)
%axes
\psline{->}(-0.5,1)(12.5,1)
\psline{->}(6,0)(6,2)
%ticks
\psline{-}(0,0.9)(0,1.1)
\psline{-}(3,0.9)(3,1.1)
\psline{-}(5.25,0.9)(5.25,1.1)
\psline{-}(5.9,1.75)(6.1,1.75)
\psline{-}(5.9,0.25)(6.1,0.25)
\psline{-}(6.75,0.9)(6.75,1.1)
\psline{-}(9,0.9)(9,1.1)
\psline{-}(12,0.9)(12,1.1)
%labels
\rput(-0.125,0.65){\psscalebox{0.7}{$-2a$}}
\rput(2.875,0.65){\psscalebox{0.7}{$-a$}}
\rput(5.125,0.65){\psscalebox{0.7}{$-1$}}
\rput(5.675,0.25){\psscalebox{0.7}{$-1$}}
\rput(5.825,1.75){\psscalebox{0.7}{$1$}}
\rput(6.75,0.65){\psscalebox{0.7}{$1$}}
\rput(9,0.65){\psscalebox{0.7}{$a$}}
\rput(12,0.65){\psscalebox{0.7}{$2a$}}
\rput(6,2.25){\psscalebox{0.7}{$\zeta_0$}}
\rput(12.8,1){\psscalebox{0.7}{$x_1$}}
%graph
\psline{-}(0,1)(3,0.25)(5.25,0.25)(6.75,1.75)(9,1.75)(12,1)
\end{pspicture*}
\caption{Test function $\zeta_0$}
\label{fig:zeta}
\end{figure}

\begin{proof}[Proof of Lemma \ref{lem:zeta}]
First we show
\begin{align}\label{eq:decompdirenzeta}
\int_{\R[2]} \lvert \nabla \zeta \rvert^2 dx = 2\,\left( \int_{\R} \left\lvert \bigl\lvert \tfrac{d}{dx_1} \bigr\rvert^\frac{1}{2} \zeta_0 \right\rvert^2 dx_1 + \int_{\R} \bigl\lvert \tfrac{d}{dx_1} \, \zeta_0 \bigr\rvert^2 dx_1 \right),
\end{align}
where we define 
\begin{align*}
  \int_{\R} \bigl\lvert \lvert \tfrac{d}{dx_1} \rvert^\frac{1}{2} f \bigr\rvert^2 dx_1 \defas \int_{\R} \lvert k_1 \rvert \, \lvert \ft_{x_1} f \rvert^2 dk_1 \in [0,\infty], \quad f \in L^2(\R).
\end{align*}

Indeed, for the contribution from $\Omega$ we simply have
\begin{align*}
\int_{\Omega} \lvert \nabla \zeta \rvert^2 dx \stackrel{\zeta=\zeta_0\, \textrm{in}\, \Omega}{=} \int_{\Omega} \lvert \nabla \zeta_0 \rvert^2 dx
 \stackrel{\zeta_0=\zeta_0(x_1)}{=}2 \int_{\R} \bigl\lvert \tfrac{d}{dx_1} \zeta_0 \bigr\rvert^2 dx_1.
\end{align*}
Moreover, the contribution from $\R[2] \setminus \Omega$ can be computed using \eqref{eq:outerzeta}:
\begin{align*}
\begin{split}
\MoveEqLeft[3] \int_{\R[2]\setminus\Omega} \lvert \nabla \zeta \rvert^2 dx = 2 \int_{\R \times (1,\infty)} \lvert \nabla \zeta \rvert^2 dx\\
= &2 \int_1^\infty \int_{\R} \lvert \ft_{x_1} \! (\nabla \zeta) (k_1,x_3) \rvert^2 dk_1 \, dx_3\\
= &2 \int_1^\infty \int_{\R} \bigl\lvert k_1 \,\ft_{x_1} \! (\zeta) (k_1,x_3) \bigr\rvert^2 + \bigl\lvert \partial_{x_3} \ft_{x_1} \! (\zeta)(k_1,x_3) \bigr\rvert^2 dk_1 \, dx_3\\
= &2 \int_1^\infty \int_{\R} \bigl\lvert \lvert k_1 \rvert \,\ft_{x_1} \! (\zeta) (k_1,x_3) \bigr\rvert^2 + \bigl\lvert \lvert k_1 \rvert \ft_{x_1} \! (\zeta)(k_1,x_3) \bigr\rvert^2 dk_1 \, dx_3\\
\stackrel{\eqref{eq:outerzeta}}{=} &2 \int_{\R} \lvert k_1 \rvert \, \lvert \ft_{x_1} \! (\zeta_0)\rvert^2 \; \bigg(\underbrace{\int_1^\infty 2 \,\lvert k_1 \rvert \, e^{-2\,\lvert k_1 \rvert \, (x_3 - 1)} dx_3}_{=1}\bigg) \; dk_1\\
= &2 \int_{\R} \Bigl\lvert \bigl\lvert \tfrac{d}{dx_1} \bigr\rvert^\frac{1}{2} \zeta_0 \Bigr\rvert^2 dx_1.
\end{split}
\end{align*}

Therefore \eqref{eq:decompdirenzeta} is established.

To prove \eqref{new4}, one first observes that $\int_{\R} \lvert \tfrac{d}{dx_1} \zeta_0 \rvert^2 dx_1$ remains bounded as $a\tou \infty$, so that the
leading-order contribution to  \eqref{eq:decompdirenzeta} is given by the homogeneous $\dot{H}^{\frac{1}{2}}$ norm of $\zeta_0$.
Recall that the $\dot{H}^\frac{1}{2}$ norm can be expressed as
\begin{align}\label{h1trac}
\int_{\R} \Bigl\lvert \bigl\lvert \tfrac{d}{dx_1} \bigr\rvert^\frac{1}{2} \zeta_0 \Bigr\rvert^2 dx_1 = \min \left\{ \int_{\R\times\R_+} \lvert \nabla \bar{\zeta} \rvert^2 dx \with \bar{\zeta}\in \dot{H}^1(\R\times\R_+),\, \bar{\zeta}(\cdot,0) = \zeta_0 \right\}.
\end{align}
Therefore, to estimate \eqref{h1trac}, we choose an admissible function $\bar \zeta$:
\begin{align*}
\bar{\zeta}(x) = \bar{\zeta}(r,\theta) \defas \zeta_0(r) \, \varphi(\theta), \quad x\in \R\times\R_+,
\end{align*}
where $(r,\theta)$ denote the polar coordinates of $x \in \R\times\R_+$ and $\varphi \colon [0, \pi] \to [-1,1]$ is given by
\begin{align*}
\varphi(\theta) =
1-\frac{2}{\pi} \theta, \quad 0\leq \theta \leq \pi.\end{align*}
Observe that indeed $\bar{\zeta}(\cdot,0) = \zeta_0$ in $\R$ (since $\zeta_0$ is odd and $\varphi(0)=-\varphi(\pi)=1$). Therefore, we may estimate
\begin{align*}
\int_{\R} \Bigl\lvert \bigl\lvert \tfrac{d}{dx_1} \bigr\rvert^\frac{1}{2} \zeta_0 \Bigr\rvert^2 dx_1 
&\stackrel{\eqref{h1trac}}{\leq}  \int_{\R\times\R_+} \lvert \nabla \bar{\zeta} \rvert^2 dx =  \int_0^{\pi}\int_0^\infty \!\Bigl( \lvert \tfrac{\partial}{\partial r}\bar{\zeta} \rvert^2 + \lvert \tfrac{1}{r} \tfrac{\partial}{\partial \theta} \bar{\zeta} \rvert^2 \Bigr) rdr\,d\theta\\[2ex]
&= \int_0^{\pi} \! \varphi^2 \, d\theta \underbrace{\int_0^\infty \! \lvert \tfrac{d}{dr} \zeta_0 \rvert^2 rdr}_{= \co(1)}  +\underbrace{\int_0^{\pi} \! \lvert \tfrac{d}{d\theta} \varphi \rvert^2 d\theta}_{=\frac{4}{\pi}} \underbrace{\int_0^\infty \! \zeta_0^2 \tfrac{dr}{r}}_{= \ln a + \co(1)}\\
&= \frac{4}{\pi} \ln a + \co(1),
\end{align*}
which yields the asserted scaling.
\end{proof}

\textbf{Step 3:} \textit{Stray-field and anisotropy energy.} With the test function constructed in Step~2 we can establish the relation between $\lambda\,\ES(\alpha-\theta_m)$ and stray-field/anisotropy energy. First we use the definition of $\zeta$ to rewrite
\begin{align*}
\MoveEqLeft \dashint_{-1}^1 \dashint_{-1}^1 \! \bigl( m_{1,\eta} - \cos\alpha \bigr) \, dx_1 \, dx_3\\
&= \tfrac{1}{4} \; \int_{-1}^1 \int_{-1}^1 \! \bigl( m_{1,\eta} -\cos\alpha \bigr) \, \underbrace{\partial_{x_1} \zeta}_{= 1} \, dx_1 \, dx_3\\
&= \tfrac{1}{4} \; \int_\Omega \! \bigl( m_{1,\eta} -\cos\alpha \bigr) \, \partial_{x_1} \zeta \, dx + \tfrac{1}{4a} \int_{\left((-2a,-a) \cup (a,2a)\right) \times (-1,1)} \bigl(m_{1,\eta} - \cos\alpha \bigr) dx\\
&\stackrel{\zeta=\zeta(x_1)}{=} \tfrac{1}{4} \; \int_\Omega \! m'_\eta \cdot \nabla\zeta \, dx + \tfrac{1}{4a} \int_{\left((-2a,-a) \cup (a,2a)\right) \times (-1,1)} \bigl(m_{1,\eta} - \cos\alpha \bigr) dx\\ 
&\stackrel{\eqref{eq:maxwell_2D}}{=} - \tfrac{1}{4} \, \int_{\R[2]} \! h(m_\eta) \cdot \nabla \zeta \, dx + \tfrac{1}{4a} \int_{\left((-2a,-a) \cup (a,2a)\right) \times (-1,1)} \bigl(m_{1,\eta} - \cos\alpha \bigr) dx \\
&\leq  \tfrac{1}{4} \left( \int_{\R[2]} \lvert h(m_\eta) \rvert^2 dx \right)^{\frac{1}{2}} \, \left( \int_{\R[2]} \lvert \nabla \zeta \rvert^2 dx \right)^\frac{1}{2} + \tfrac{1}{4a} \left( 4a \right)^\frac{1}{2} \left( \int_\Omega \! \bigl( m_{1,\eta} - \cos\alpha \bigr)^2 dx \right)^\frac{1}{2}\\
&\stackrel{\text{Lemma } \ref{lem:zeta}}{\leq} \left( \left(\tfrac{1}{2\pi} \, \ln a + \co(1) \right)\, \int_{\R[2]} \lvert h(m_\eta) \rvert^2 dx \right)^{\frac{1}{2}} + \left( \tfrac{1}{4a} \int_\Omega \! \bigl(m_{1,\eta} - \cos\alpha \bigr)^2 dx \right)^\frac{1}{2},
\end{align*}
as $a\tou \infty$.
If we now apply
\begin{align*}
\sqrt{\alpha} + \sqrt{\beta} \leq \sqrt{\alpha(1+\delta) + \delta^{-2} \beta},
\end{align*}
which holds\footnote{Use $\sqrt{\alpha\beta} \leq \frac{\delta}{2} \alpha + \frac{1}{2\delta} \beta$ and $\frac{1}{\delta} \leq \frac{1}{\delta^2} - 1$.} for $0 < \delta \leq \frac{1}{2}$, $\alpha,\beta\geq0$, to
\begin{align*}
\alpha &= \left( \tfrac{1}{2\pi} \, \ln a + \co(1) \right) \, \int_{\R[2]} \lvert h(m_\eta) \rvert^2 dx,\\
\beta &= \tfrac{1}{4a} \int_\Omega \! \bigl( m_{1,\eta} - \cos\alpha \bigr)^2 dx,
\end{align*}
we find
\begin{align*}
\left( \dashint_{-1}^1 \dashint_{-1}^1 \bigl( m_{1,\eta} - \cos\alpha \bigr) dx \right)^2 \leq \; &\left(\tfrac{1}{2\pi} \ln a +\co(1) \right)(1+\delta) \int_{\R[2]} \lvert h(m_\eta) \rvert^2 dx\\
&+\tfrac{1}{4\delta^2} \, \tfrac{1}{a} \int_\Omega \bigl(m_{1,\eta} - \cos\alpha\bigr)^2 dx.
\end{align*}
Now, for $\delta\in (0, \frac 1 2]$ fixed, choose $a=a(\eta)$ such that
\begin{align*}
\tfrac{1}{4\delta^2 a} = \lambda^{-1} \tfrac{1+\delta}{2\pi} \, \eta.
\end{align*}
This implies $a\tou \infty$ as $\eta\tod 0$ and $\ln a=\ln\tfrac{1}{\eta} + \co(1)$ as $\eta\tod 0$.

Note that $\lambda \ln\tfrac{1}{\eta} \int_{\R[2]} \lvert h(m_\eta) \rvert^2 dx \leq E_\eta(m_\eta)$ is uniformly bounded and thus $\int_{\R[2]} \lvert h(m_\eta) \rvert^2 dx \to 0$ as $\eta\tod 0$. Together with $\cos\theta_m = \dashint_{-1}^1 \! \bar{m}_1 \, dx_1 \leftarrow \dashint_{-1}^1 \! \bar{m}_{1,\eta} \, dx_1$, we obtain:
\begin{align*}
\bigl( \cos\theta_m - &\cos\alpha \bigr)^2 = \left( \liminf_{\eta \tod 0} \dashint_{-1}^1 \dashint_{-1}^1 \bigl(m_{1,\eta} - \cos\alpha \bigr) dx_1\,dx_3 \right)^2\\
&\leq \liminf_{\eta \tod 0} \left( \tfrac{1}{2\pi} (1+\delta) \ln a \int_{\R[2]} \lvert h(m_\eta) \rvert^2 dx + \tfrac{1}{4\delta^2a} \int_\Omega \bigl(m_{1,\eta} - \cos\alpha \bigr)^2 dx \right)\\
&= (1+\delta) \, \liminf_{\eta \tod 0} \left( \tfrac{1}{2\pi} \ln\tfrac{1}{\eta} \int_{\R[2]} \lvert h(m_\eta) \rvert^2 dx + \lambda^{-1} \tfrac{1}{2\pi}\, \eta \int_\Omega \bigl( m_{1,\eta} - \cos\alpha \bigr)^2 dx \right).
\end{align*}
Letting now $\delta \tod 0$, it follows: 
\begin{align}\label{eq:sfestimate}
2\pi \lambda \bigl(\cos\theta_m - \cos\alpha\bigr)^2 \leq \liminf_{\eta \tod 0} \left( \lambda \ln\tfrac{1}{\eta} \int_{\R[2]} \lvert h(m_\eta) \rvert^2 dx + \eta \int_\Omega \bigl( m_{1,\eta}- \cos\alpha \bigr)^2 dx \right).
\end{align}

\textbf{Step 4:} \textit{Conclusion.} By combining \eqref{eq:exchangeestimate} and \eqref{eq:sfestimate} one sees
\begin{align*}
E_0(m) &\leq \liminf_{\eta \tod 0} \int_\Omega \lvert \nabla m_\eta \rvert^2 dx + \liminf_{\eta \tod 0} \left( \lambda \ln\tfrac{1}{\eta} \int_{\R[2]} \lvert h(m_\eta) \rvert^2 dx + \eta \int_\Omega \bigl( m_{1,\eta} -\cos\alpha \bigr)^2 dx \right)\\
&\leq \liminf_{\eta \tod 0} E_\eta(m_\eta),
\end{align*}
i.e. the lower bound \eqref{eq:lowerbd} is proven. \qed

\subsection{Upper bound}
For each $m\in X_0$ we construct a recovery sequence $\{m_\eta\}_{\eta\tod 0} \subset X^\alpha$ such that $m_\eta \to m$ in $\dot{H}^1(\Omega)$ and \eqref{eq:upperbd} holds. 
For that, in the general case $\theta_m \not\in \{0,\alpha,\pi\}$, the basic guideline will be a decomposition of $\Omega$ into several parts (as shown in Figure \ref{fig:domaindec}):
% figure of \Omega/recovery sequence
\begin{figure}[htbp]
\centering
\begin{pspicture*}(-0.5,-1)(12.5,3.5)
%vertical rules
\psline{-}(0,0.15)(0,1.85)
\psline{-}(3.55,0.15)(3.55,1.85)
\psline{-}(4.95,0.15)(4.95,1.85)
\psline{-}(7.05,0.15)(7.05,1.85)
\psline{-}(8.45,0.15)(8.45,1.85)
\psline{-}(12,0.15)(12,1.85)
%horizontal labels
\rput(-0.1,0.6){\psscalebox{0.65}{\psframebox*[framearc=0.3]{$-\tfrac{a+1}{\eta}$}}}
\rput(3.45,0.6){\psscalebox{0.65}{\psframebox*[framearc=0.3]{$-(a+1)$}}}
\rput(4.95,0.6){\psscalebox{0.65}{\psframebox*[framearc=0.3]{$-a$}}}
\rput(7.05,0.6){\psscalebox{0.65}{\psframebox*[framearc=0.3]{$a$}}}
\rput(8.45,0.6){\psscalebox{0.65}{\psframebox*[framearc=0.3]{$a + 1$}}}
\rput(12,0.6){\psscalebox{0.65}{\psframebox*[framearc=0.3]{$\tfrac{a+1}{\eta}$}}}
%horizontal rules
\psline{-}(-0.25,1.85)(12.25,1.85)
\psline{-}(-0.25,0.15)(12.25,0.15)
%vertical labels
\rput(5.675,0.15){\psframebox*[framearc=0.3]{$-1$}}
\rput(5.825,1.85){\psframebox*[framearc=0.3]{$1$}}
%axes
\psline{->}(-0.5,1)(12.5,1)
\psline{->}(6,-0.25)(6,2.25)
%ticks
\psline{-}(0,0.9)(0,1.1)
\psline{-}(3.55,0.9)(3.55,1.1)
\psline{-}(5.9,1.85)(6.1,1.85)
\psline{-}(5.9,0.15)(6.1,0.15)
\psline{-}(8.45,0.9)(8.45,1.1)
\psline{-}(12,0.9)(12,1.1)
%width and description of parts
\psline{|-|}(0,-0.5)(3.54,-0.5)
\rput*(1.775,-0.5){\psscalebox{0.8}{$\Omega_T$}}
\psline{|-|}(3.56,-0.5)(4.94,-0.5)
\rput*(4.275,-0.5){\psscalebox{0.8}{$\Omega_I$}}
\psline{|-|}(4.96,-0.5)(7.04,-0.5)
\rput*(6,-0.5){\psscalebox{0.8}{$\Omega_A$}}
\psline{|-|}(7.06,-0.5)(8.44,-0.5)
\rput*(7.775,-0.5){\psscalebox{0.8}{$\Omega_I$}}
\psline{|-|}(8.46,-0.5)(12,-0.5)
\rput*(10.225,-0.5){\psscalebox{0.8}{$\Omega_T$}}
%labels for parts
\rput(1.775,1){\psscalebox{0.8}{\psframebox*[framearc=0.3]{N\'eel tails}}}
\rput(4.275,1){\psscalebox{0.8}{\psframebox*[framearc=0.3]{Interp.}}}
\rput(6,1){\psscalebox{0.8}{\psframebox*[framearc=0.3]{Asym. wall}}}
\rput(7.775,1){\psscalebox{0.8}{\psframebox*[framearc=0.3]{Interp.}}}
\rput(10.225,1){\psscalebox{0.8}{\psframebox*[framearc=0.3]{N\'eel tails}}}
%magnetization
\rput{45}(7.6,2.6){\psscalebox{0.65}{$m_\eta\approx\left(\begin{smallmatrix}\cos\theta_m\\\sin\theta_m\\0\end{smallmatrix}\right)$}}
\rput{45}(9.05,2.6){\psscalebox{0.65}{$m_\eta=\left(\begin{smallmatrix}\cos\theta_m\\\sin\theta_m\\0\end{smallmatrix}\right)$}}
\end{pspicture*}
\caption{Construction of recovery sequence}
\label{fig:domaindec}
\end{figure}
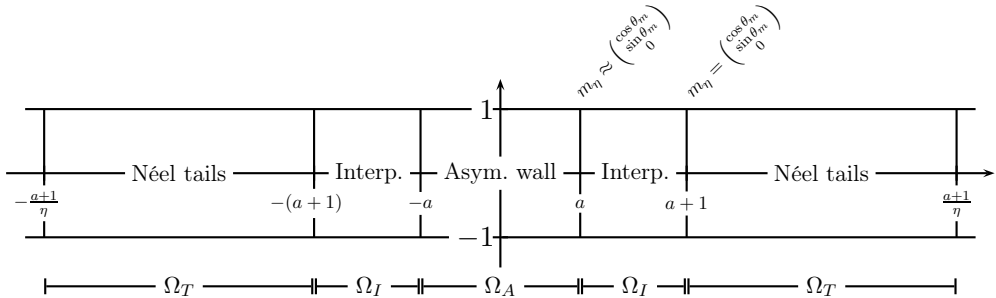
We consider the regions
\begin{align*}
\Omega_A &\defas [-a,a] \times (-1,1),\\
\Omega_I &\defas \Bigl([-(a+1),-a] \cup [a,a+1] \Bigr)\times(-1,1),\\
\Omega_T &\defas \Bigl([-\tfrac{a+1}{\eta},-(a+1)] \cup [a+1,\tfrac{a+1}{\eta}] \Bigr)\times(-1,1),
\end{align*}
where $a$ is a parameter of order $\ln^\frac{3}{2}\! \frac 1 \eta\gg 1$ (to be chosen explicitly at Step 1 below).

\begin{itemize}
  \item The (core) region $\Omega_A$ stands for the asymmetric part of the transition layer $m_\eta$: Here, $m'_\eta$ is of vanishing divergence (so, a stray-field free configuration) with an asymptotic angle transition from $-\theta_m$ to $\theta_m$ (as $\eta \tod 0$) 
so that the leading-order term is driven by the exchange energy. 

  \item The (tail) region $\Omega_T$ corresponds to the symmetric part of the transition layer $m_\eta$ that mimics the tails of a symmetric 
N\'eel wall. Here, the leading order term of the energy is driven by the stray field, the transition angle covering the range $[-\alpha, -\theta_m]$ (at the left) and $[\theta_m, \alpha]$ (at the right), respectively. 

  \item The (intermediate) region $\Omega_I$ is necessary for the transition between the core of $m_\eta$ and the tails of a N\'eel wall. 
This is because the asymmetric core of $m_\eta$ and the symmetric tails will not fit together perfectly (on $\Omega_A$, the angle transition is
$2\theta_m+o(1)$ and not exactly $2\theta_m$), however, this region only adds energy of order $\so(1)$.
\end{itemize}

So, let $m\in X_0$. Since $\theta_m \in [0,\pi]$, we need to treat three different cases:

\textbf{Case 1:} \textit{$\theta_m \not\in \{0, \alpha, \pi\}$.} We proceed in several steps:

\noindent {\bf Step 1:} \textit{Choice of $a$.}
We consider the $L^1(\R)$ positive function $E\colon\R\to \R_+$ defined by:
\begin{align*}
E(x_1) \defas \int_{-1}^1 \lvert \nabla m(x_1, x_3) \rvert^2 dx_3 \quad \textrm{ for a.e. } x_1\in \R.
\end{align*}

For $\eta \ll 1$, let $b = b(\eta) \defas \ln^\frac{3}{2}\tfrac{1}{\eta}$ (in fact, any choice $\ln^\gamma\! \tfrac{1}{\eta}$ with $\gamma \in (1,2)$ would work). We choose
$$a=a(\eta) \in [\tfrac{b}{2},b]$$
such that $a$ and $-a$ are Lebesgue points of $E$ and
\begin{align}\label{eq:choiceofa}
E(a)+E(-a) \leq \dashint_{\tfrac{b}{2}}^{b} \! E(x_1)+E(-x_1)\, dx_1 \leq \frac{2}{b} \int_\Omega \lvert \nabla m \rvert^2 dx = \frac{C_0}{b},
\end{align}
with $C_0 = 2 \int_\Omega \lvert \nabla m \rvert^2 dx < \infty$. In particular, the $\dot{H}^\frac{1}{2}$-trace of $m$ on the vertical lines $\{\pm a\}\times (-1,1)$ actually belongs to $H^1$. Since $\bar{m}_2 \mp \sin\theta_m \in {H}^1(\R_\pm)$ (due to $m_2(\pm\infty,\cdot)=\pm\sin\theta_m$), we also have
\begin{align*}
\bar{m}_2(\pm a)= \pm \sin\theta_m+o(1) \quad \textrm{as } \eta \tod 0.
\end{align*}
Recall that Sobolev's embedding theorem yields existence of $C>0$ such that
$$\|u\|_{L^\infty}\leq C \|\dds u\|_{L^2}, \quad \textrm{ for  every } u\in H_0^1\big((-1, 1)\big),$$ 
together with
$$\|u-\bar u\|_{L^\infty}\leq C \|\dds u\|_{L^2}, \quad \textrm{ for  every } u\in H^1\big((-1, 1)\big).$$ 
Therefore,
\begin{align}
\nonumber \sqrt{\frac{C_0}{b}} &\geq \biggl( \int_{-1}^1 \sum_{\sigma\in \{\pm1\}}\lvert \nabla m(\sigma a,x_3) \rvert^2 dx_3 \biggr)^\frac{1}{2}\\
\nonumber
&\geq \biggl(\sum_{\sigma\in \{\pm1\}} \int_{-1}^1 \lvert \partial_{x_3} m_1(\sigma a,x_3) \rvert^2 + \lvert \partial_{x_3} m_2(\sigma a,x_3) \rvert^2 + \lvert \partial_{x_3} m_3(\sigma a,x_3) \rvert^2 dx_3 \biggr)^\frac{1}{2}\\
\label{eq:valofm1m2m3}
&\geq C \sum_{\sigma\in \{\pm 1\}} \Bigl( \| m_1(\sigma a, \cdot) - \underbrace{\bar{m}_1(\sigma a)}_{=\cos\theta_m} \|_{L^\infty}+  
\|m_2(\sigma a,\cdot) - \bar{m}_2(\sigma a) \|_{L^\infty}+ \|m_3(\sigma a,\cdot)\|_{L^\infty} \Bigr).
\end{align}
It follows that $m_1(\pm a, x_3)=\cos\theta_m+o(1)$, $m_3(\pm a, x_3)=o(1)$ and 
$m_2(\pm a, x_3)= \bar{m}_2(\pm a)+o(1)= \pm\sin\theta_m+o(1)$ uniformly in $x_3\in (-1, 1)$ as $\eta\tod 0$ (since $b\to \infty$).

\noindent {\bf Step 2:} \textit{Definition of $m_\eta$.}

\begin{itemize}
  \item On $\Omega_A$, we choose that $m_\eta(x)=m(x)$ for every $x\in \Omega_A$.

  \item On the tail region $\{|x_1|\geq a+1\}$, we choose $m_\eta$ to be the $\mathbb{S}^1$-valued approximation of a N\'eel wall with a transition angle that
goes from $-\alpha$ to $-\theta_m$ (on the left) and from $\theta_m$ to $\alpha$ (on the right). More precisely, as in \cite{ignat09},  
let $m_\eta \colon \Omega\setminus (\Omega_A \cup \Omega_I) \to \mathbb{S}^1$ depend only on the $x_1$-direction and be given by
\begin{align*}
&\quad m_{1,\eta}(x_1,x_3) \defas
\begin{cases}
\cos\alpha + \frac{\cos\theta_m -\cos\alpha}{\ln \frac{1}{\eta}} \ln\bigl(\frac{a+1}{\eta  |x_1|} \bigr), & a+1\leq | x_1 | \leq \frac{a+1}{\eta},\,x_3\in(-1,1)\\
\cos\alpha, & \frac{a+1}{\eta}\leq |x_1|,\,x_3\in(-1,1),
\end{cases}\\
& \left\{\begin{aligned} m_{2,\eta}(x_1,x_3) &\defas \sgn(x_1) \sqrt{1-m^2_{1,\eta}(x_1,x_3)},\\
m_{3,\eta}(x_1,x_3) &\defas 0,
\end{aligned}\right\} \text{ on } \{a+1 \leq |x_1|\}\times (-1,1).
\end{align*}

  \item On the intermediate region $\Omega_I$,  i.e. for $a < \lvert x_1 \rvert <a+1$, we define $m_\eta$ by linear interpolation in $m_{3, \eta}$ and the phase $\phi_\eta$ of $(m_{1, \eta}, m_{2, \eta})$ (interpreted as complex number) between $\Omega_T$ and $\Omega_A$. For this, we choose $\eta$ sufficiently small, such that
\begin{align}\label{eq:niceeta}
\pm m_2(\pm a,\cdot)>\frac{\sin\theta_m}{2} > 0 \quad  \textrm{ in } (-1,1).
\end{align}

Therefore, there exists a unique phase
$\phi_\eta(\pm a,x_3)\in(0,\pi)$ of $(m_1,m_2)(\pm a,x_3)\in \R[2]\simeq \C$ such that 
\begin{align*}
(m_1 + i m_2)(\pm a,x_3) = \sqrt{1-m^2_3(\pm a,x_3)} \, \, e^{\pm i\phi_\eta(\pm a,x_3)} \quad  \textrm{ for every } x_3\in (-1,1).
\end{align*}
Observe that the function $\phi_\eta(\pm a,\cdot)$ depends on $\eta$ only through $a$. Recall that $m_{3,\eta}(\pm(a+1), \cdot)=0$ and $(m_{1,\eta} +i m_{2,\eta})(\pm(a+1), \cdot)=e^{\pm i\theta_{m}}$ so that we fix $\phi_\eta(\pm(a+1), \cdot)\defas \theta_m$ on 
$(-1, 1)$.
By linear interpolation, we then define $m_{\eta}\colon\Omega_I\to \mathbb{S}^2$ and $\phi_\eta\colon\Omega_I\to (0, \pi)$ by
\begin{align}\label{eq:defm3}
m_{3,\eta}(x) &\defas \bigl(1+a - \lvert x_1 \rvert \bigr) m_3(\pm a,x_3),\\
\nonumber
\phi_\eta(x)& \defas \bigl(1+a-\lvert x_1 \rvert\bigr) \phi_\eta(\pm a,x_3) + \bigl(\lvert x_1 \rvert - a \bigr) \theta_m,\\
\label{eq:defm1m2}
(m_{1,\eta} +i m_{2,\eta})(x) &\defas \sqrt{1-m^2_{3,\eta}(x)} \, \, e^{\pm i\phi_\eta(x)},\\\nonumber
\text{whenever } &a<\pm x_1<a+1, \, x_3\in (-1, 1).
\end{align}
\end{itemize}

Note that $m_\eta(\pm \infty, \cdot)=m^\pm_\alpha$ (in the sense of \eqref{convent}) since $m_\eta \neq m^\pm_\alpha$ only on the bounded set 
$\Omega_A\cup \Omega_I\cup \Omega_T$. We will show that $m_\eta$ has $H^1$ regularity on $\Omega_A$, $\Omega_I$ and $\Omega_T$. Moreover, the $H^\frac{1}{2}$-traces of $m_\eta$ on the vertical lines $\{\pm a\} \times (-1,1)$ and $\{\pm (a+1)\} \times (-1,1)$ do agree, so that finally 
$m_\eta\in \dot{H}^1(\Omega)$, i.e., $m_\eta\in X^\alpha$.

\textbf{Step 3:} \textit{Exchange energy estimate.}
We prove
\begin{align}\label{eq:exchangeenergy}
\int_\Omega \lvert \nabla m_\eta \rvert^2 dx \leq \int_\Omega \lvert \nabla m \rvert^2 dx + o(1).
\end{align}
Indeed,

\begin{itemize}
  \item on $\Omega_A$, we have that $m_\eta\equiv m$ so that
\begin{align}\label{eq:exchangeasym}
  \int_{\Omega_A} \lvert \nabla m_\eta \rvert^2 dx \leq \int_\Omega \lvert \nabla m \rvert^2 dx;
\end{align}

  \item on $\Omega_T$, since $\lvert m_{1,\eta} \rvert \leq \max \bigl( \lvert\cos\theta_m\rvert, \cos\alpha \bigr) =: \mu < 1$, we deduce
\begin{align}\label{eq:exchangeneel}
  \int_{-1}^1 \int_{a+1}^{\frac{a+1}{\eta}} \lvert \nabla m_\eta \rvert^2 dx &= 2 \int_{a+1}^{\frac{a+1}{\eta}} \frac{\lvert \tfrac{d}{dx_1}m_{1,\eta}(x_1) \rvert^2}{1-m_{1, \eta}^2(x_1)} dx_1\notag\\
&\leq 2 \bigl(\tfrac{\cos\theta_m-\cos\alpha}{\ln\frac{1}{\eta}}\bigr)^2 \int_{a+1}^{\frac{a+1}{\eta}} \frac{x_1^{-2}}{1-\mu^2} dx_1\notag\\
&\leq  \frac{C(\theta_m)}{b \, \ln^2\tfrac{1}{\eta}}=\so(1) \quad \textrm{as } \eta\tod 0,
\end{align}
where we used $a+1 \geq \frac{b}{2}$ in the last inequality;

  \item on the intermediate region $\Omega_I$, we use the following lemma:
\medskip
\begin{lem}
For $a\leq \pm x_1\leq a+1$ and $x_3\in (-1, 1)$, we have
\begin{align}\label{eq:partialsofm1m2m3}
\begin{split}
 (i)\quad  \left\lvert \partial_{x_1} m_{3,\eta}(x) \right\rvert^2 &\leq m^2_3(\pm a,x_3),\\
 (ii)\quad  \left\lvert \partial_{x_3} m_{3,\eta}(x) \right\rvert^2 &\leq \left\lvert \partial_{x_3} m_3(\pm a,x_3) \right\rvert^2,\\
  (iii)\quad \left\lvert \partial_{x_1}\left(\begin{smallmatrix}m_{1,\eta}\\m_{2,\eta}\end{smallmatrix}\right)(x) \right\rvert^2 &\leq m^2_3(\pm a,x_3) + \left\lvert \phi_\eta(\pm a,x_3) - \theta_m\right\rvert^2,\\
   (iv)\quad  \left\lvert \partial_{x_3}\left(\begin{smallmatrix}m_{1,\eta}\\m_{2,\eta}\end{smallmatrix}\right)(x) \right\rvert^2 &\leq 2 \left\lvert \partial_{x_3}\left(\begin{smallmatrix}m_1\\m_2\end{smallmatrix}\right)(\pm a,x_3) \right\rvert^2.
\end{split}
\end{align}
\end{lem}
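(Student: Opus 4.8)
The plan is to read off all four bounds from the explicit interpolation formulas \eqref{eq:defm3}--\eqref{eq:defm1m2}, treating $x_3$ as a parameter and abbreviating $s := 1+a-|x_1|\in[0,1]$ (so that $|x_1|-a = 1-s$), $\mu_3 := m_3(\pm a,x_3)$ and $\mu_\phi := \phi_\eta(\pm a,x_3)$. With this notation one has, on $\Omega_I$, $m_{3,\eta} = s\,\mu_3$ and $\phi_\eta = s\,\mu_\phi + (1-s)\,\theta_m$, and $0\le m_{3,\eta}^2 = s^2\mu_3^2\le\mu_3^2$.

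Bounds (i) and (ii) are then immediate. Since $\partial_{x_1}s = \mp 1$ on $\{a<\pm x_1<a+1\}$, we get $\partial_{x_1}m_{3,\eta} = \mp\mu_3$, hence $|\partial_{x_1}m_{3,\eta}|^2 = \mu_3^2 = m_3^2(\pm a,x_3)$; and $\partial_{x_3}m_{3,\eta} = s\,\partial_{x_3}\mu_3$ with $0\le s\le 1$, hence $|\partial_{x_3}m_{3,\eta}|^2\le|\partial_{x_3}\mu_3|^2$.

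For (iii) and (iv) I would use the polar decomposition $(m_{1,\eta},m_{2,\eta}) = \rho\,(\cos\psi,\sin\psi)$ with $\rho := \sqrt{1-m_{3,\eta}^2}$ and $\psi := \pm\phi_\eta$ (note $\rho$ is bounded away from $0$ by Step~1, and $\psi\in H^1$ in view of \eqref{eq:niceeta}), which gives the orthogonal splitting $|\partial_\star(m_{1,\eta},m_{2,\eta})|^2 = (\partial_\star\rho)^2 + \rho^2(\partial_\star\psi)^2$ for $\star\in\{x_1,x_3\}$. The angular part is controlled by $\rho^2\le 1$ together with the explicit (affine) dependence of $\phi_\eta$ on $|x_1|$: $\partial_{x_1}\psi = \pm(\theta_m-\mu_\phi)$ yields $\rho^2(\partial_{x_1}\psi)^2\le|\mu_\phi-\theta_m|^2$, while $\partial_{x_3}\psi = \pm s\,\partial_{x_3}\mu_\phi$ yields $\rho^2(\partial_{x_3}\psi)^2 = (1-s^2\mu_3^2)\,s^2\,|\partial_{x_3}\mu_\phi|^2$, which I then compare with the summand $(1-\mu_3^2)|\partial_{x_3}\mu_\phi|^2$ of $|\partial_{x_3}(m_1,m_2)(\pm a,x_3)|^2$ obtained by the same polar decomposition at $x_1=\pm a$. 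For the radial part the chain rule gives $\partial_\star\rho = -m_{3,\eta}\,\partial_\star m_{3,\eta}/\rho$, hence $(\partial_\star\rho)^2 = m_{3,\eta}^2(\partial_\star m_{3,\eta})^2/(1-m_{3,\eta}^2)$; substituting (i), resp.\ (ii), and using $m_{3,\eta}^2\le\mu_3^2$ reduces (iii) to the elementary inequality $m_{3,\eta}^2\le 1-m_{3,\eta}^2$, and (iv) to $s^4/(1-s^2\mu_3^2)\le 2/(1-\mu_3^2)$ together with $(1-s^2\mu_3^2)\,s^2\le 2(1-\mu_3^2)$, all of which follow from $0\le s\le 1$ and $1-s^2\mu_3^2\ge 1-\mu_3^2>0$ once $\mu_3^2\le\tfrac12$.

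The only real obstacle is precisely this last restriction $m_3^2(\pm a,x_3)\le\tfrac12$: the pointwise bounds (iii), (iv) are genuinely false for magnetizations whose $m_3$ is large at $x_1=\pm a$. This is where Step~1 is used — there $\|m_3(\pm a,\cdot)\|_{L^\infty}=o(1)$ as $\eta\downarrow 0$ (uniformly in $x_3$), so for $\eta$ sufficiently small $m_{3,\eta}^2\le\mu_3^2<\tfrac12$ and the elementary inequalities above apply. Summing the radial and angular contributions then delivers (iii) and (iv).
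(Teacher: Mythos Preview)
Your argument is correct and follows essentially the same route as the paper: the polar decomposition $|\partial_\star(\rho e^{i\psi})|^2=(\partial_\star\rho)^2+\rho^2(\partial_\star\psi)^2$, the bounds $\rho^2\le 1$ and $m_{3,\eta}^2/(1-m_{3,\eta}^2)\le 1$ for (iii), and for (iv) the comparison with the same decomposition at $x_1=\pm a$ together with $\mu_3^2\le\tfrac12$ from Step~1. The paper phrases the radial bound in (iv) via monotonicity of $t\mapsto t/(1-t)$ rather than your explicit inequality $s^4/(1-s^2\mu_3^2)\le 1/(1-\mu_3^2)$, but these are the same observation.
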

\begin{proof}
  Inequalities $(i)$ and $(ii)$ immediately follow from the definition \eqref{eq:defm3} of $m_{3,\eta}$ on $\Omega_I$ .

To prove the remaining inequalities we use the identity $\lvert \partial_{x_i} \left( \rho(x) e^{i\varphi(x)} \right) \rvert^2 = \lvert \partial_{x_i} \rho(x) \rvert^2 + \lvert \rho(x) \partial_{x_i} \varphi(x) \rvert^2$ for real-valued functions $\rho$ and $\varphi$.
Therefore, for (iii), using that $\lvert m_{3,\eta}(x) \rvert \leq \lvert m_3(\pm a,x_3) \rvert \leq \frac{1}{2}$ for $\eta$ sufficiently small (see \eqref{eq:valofm1m2m3}), we deduce for $x\in \Omega_I$:
\begin{align*}
  \MoveEqLeft \left\lvert \; \partial_{x_1} (m_{1,\eta}+im_{2,\eta})(x) \right\rvert^2\\
  & = \underbrace{\tfrac{m_{3,\eta}^2(x)}{1-m_{3,\eta}^2(x)}}_{\leq 1}  \left\lvert \partial_{x_1} m_{3,\eta}(x) \right\rvert^2 + \underbrace{\left(1 - m_{3,\eta}^2(x) \right)}_{\leq 1} \left\lvert \partial_{x_1} \phi_\eta(x) \right\rvert^2\\
& \stackrel{(i)}{\leq}  m^2_3(\pm a,x_3) + \left\lvert \phi_\eta(\pm a,x_3) - \theta_m\right\rvert^2.
\end{align*}

Similarly, for $(iv)$, using that $t\mapsto \frac{t}{1-t}$ is increasing on $(0, 1)$, we obtain for the $x_3$-derivative of $m_{1,\eta}$ and $m_{2,\eta}$ and $x\in \Omega_I$:
\begin{align*}
  \MoveEqLeft \left\lvert \partial_{x_3}(m_{1,\eta}+im_{2,\eta})(x) \right\rvert^2\\
  &\leq \tfrac{m_{3,\eta}^2(x)}{1-m_{3,\eta}^2(x)} \left\lvert \partial_{x_3} m_{3,\eta}(x) \right\rvert^2 + \underbrace{\left(1 - m_{3,\eta}^2(x) \right)}_{\leq 1} \left\lvert \partial_{x_3} \phi_\eta(x) \right\rvert^2\\
&\stackrel{\eqref{eq:defm3}}{\leq} \tfrac{m_3^2(\pm a,x_3)}{1-m_3^2(\pm a,x_3)} \left\lvert \partial_{x_3} m_3(\pm a,x_3) \right\rvert^2 + \underbrace{2\left(1 - m_3^2(\pm a,x_3) \right)}_{\geq 1} \left\lvert \partial_{x_3} \phi_\eta(\pm a,x_3) \right\rvert^2\\
&\leq 2 \left( \tfrac{m_3^2(\pm a,x_3)}{1-m_3^2(\pm a,x_3)} \left\lvert \partial_{x_3}m_3(\pm a,x_3) \right\rvert^2 + \left(1 - m_3^2(\pm a,x_3) \right) \left\lvert \partial_{x_3} \phi_\eta(\pm a,x_3) \right\rvert^2 \right)\\
&= 2 \left\lvert \partial_{x_3}(m_1 +i m_2)(\pm a,x_3) \right\rvert^2.\qedhere
\end{align*}
\end{proof}

Note that for sufficiently small $\eta$ the function $\phi_\eta(\pm a,\cdot)\in(0,\pi)$ is bounded away from $0$ and $\pi$, such that by Lipschitz continuity of $\arccos$ and \eqref{eq:valofm1m2m3} we have
\begin{align}\label{eq:diffanglessmall}
\lvert \phi_\eta(\pm a,\cdot) - \theta_m \rvert^2 \leq \frac{C(\theta_m)}{b} \quad \textrm{ on } (-1, 1).
\end{align}

Therefore, after integrating \eqref{eq:partialsofm1m2m3} over $\Omega_I$, \eqref{eq:diffanglessmall}, \eqref{eq:choiceofa} and \eqref{eq:valofm1m2m3} show that
\begin{align}\label{eq:decayexchangetrans}
  \int_{\Omega_I} \lvert \nabla m_\eta \rvert^2 dx = \int_{\{a\leq |x_1|\leq a+1\}} \int_{-1}^1 \lvert \nabla m_\eta \rvert^2 dx_3\,dx_1 \leq \frac{C(\theta_m)}{b} = \so(1)
\textrm{ as } \eta\tod 0, \end{align}
which together with \eqref{eq:exchangeasym} and \eqref{eq:exchangeneel} implies \eqref{eq:exchangeenergy}.
\end{itemize}

\textbf{Step 4:} \textit{Stray-field energy estimate.}
We will prove that
\begin{align}\label{eq:strayfieldenergy}
\lambda \ln\tfrac{1}{\eta} \int_{\R[2]} \lvert h(m_\eta) \rvert^2 dx \leq 2\pi \,\lambda \, \bigl(\cos\theta_m - \cos\alpha \bigr)^2 + \so(1).
\end{align}
Indeed, we follow the arguments in \cite[Proof of Thm. 2(ii)]{ignat09} (see also \cite{ik10,io11}). First of all, recall that $m_{3, \eta}=0$ on $\partial \Omega$ and that $\nabla\cdot m'_\eta$ is supported in the compact set $\overline{\Omega_A\cup\Omega_I\cup\Omega_T}$. Therefore,
the stray field $h_\eta = -\nabla u_\eta$ with $u_\eta\in \dot{H}^1(\R[2])$  satisfies 
\begin{align*}
\int_{\R[2]} \! \nabla u_\eta  \cdot \nabla v \, dx = -\int_\Omega \! \nabla \cdot m_\eta' \, v \, dx \quad \forall v \in \dot{H}^1(\R[2]),
\end{align*}
so that by choosing $v := u_\eta$, we have:
\begin{align}
\nonumber
\MoveEqLeft \int_{\R[2]} \lvert \nabla u_\eta \rvert^2 dx = -\int_\Omega \nabla \cdot m_\eta' \, u_\eta \, dx\\
\label{str1}&= -\int_{\Omega_A} \underbrace{\nabla \cdot m'}_{=0} \, u_\eta \, dx - \int_{\Omega_I} \nabla \cdot m_\eta' \, u_\eta \, dx - \int_{\Omega_T} \nabla \cdot m_\eta' \, u_\eta \, dx.
\end{align}

\begin{itemize}
  \item On $\Omega_I$, since $m_{3,\eta} = 0$ on $\partial \Omega$ and $\bar{m}_{1,\eta}(\pm a) = \cos\theta_m$ as well as $m_{1,\eta}(\pm (a+1),\cdot) =\cos\theta_m$, we have
\begin{align*}
  \int_{(\Omega_I)_+} \! \nabla \cdot m_\eta' \, dx = \int_{(\partial \Omega_I)_+} \! m_\eta' \cdot \nu \, d\hm^1(x) = \bar{m}_{1,\eta}(a+1)-\bar{m}_{1,\eta}(a) = 0
\end{align*}
and similarly,
$\int_{(\Omega_I)_-} \! \nabla \cdot m_\eta' \, dx = 0$,
where $\nu$ denotes the outer unit normal to $\Omega_I$ and we use the notation $(\Omega_I)_+=\Omega_I\cap\{x_1\geq 0\}$, $(\Omega_I)_-=\Omega_I\cap\{x_1\leq 0\}$.
Therefore we may subtract the averages $$\textrm{$\bar{u}_\eta^+ = \dashint_{(\Omega_I)_+}  \! u_\eta \, dx\quad $ and $\quad \bar{u}_\eta^- = \dashint_{(\Omega_I)_-}\!u_\eta\,dx$}$$ of $u_\eta$ over the left and right parts of $\Omega_I$ such that by \eqref{eq:decayexchangetrans}, Cauchy-Schwarz and Poincar\'e-Wirtinger's inequality, we deduce:
\begin{align}
  \nonumber \int_{\Omega_I} \! \nabla \cdot m_\eta' \, u_\eta \, dx &= \int_{(\Omega_I)_-} \! \nabla \cdot m_\eta' \, \left(u_\eta - \bar{u}_\eta^- \right) dx + \int_{(\Omega_I)_+} \! \nabla \cdot m_\eta' \, \left(u_\eta - \bar{u}_\eta^+ \right) dx\\
 \nonumber &\leq \left( \int_{\Omega_I} \lvert \nabla \cdot m_\eta' \rvert^2 dx\right)^\frac{1}{2} \; \left( \int_{(\Omega_I)_-} \lvert u_\eta - \bar{u}_\eta^- \rvert^2 dx \right)^\frac{1}{2}\\
 \nonumber &+ \left( \int_{\Omega_I} \lvert \nabla\cdot m_\eta' \rvert^2 dx\right)^\frac{1}{2} \; \left( \int_{(\Omega_I)_+} \lvert u_\eta - \bar{u}_\eta^+ \rvert^2 dx \right)^\frac{1}{2}\\
 \nonumber &\stackrel{\eqref{eq:decayexchangetrans}}{\leq} \left( \frac{C}{b} \right)^\frac{1}{2} \; \left( \int_{\R[2]} \lvert \nabla u_\eta \rvert^2 dx \right)^\frac{1}{2}\\
 \label{str2} &= \Bigl( C \ln^{-\frac{3}{2}}\tfrac{1}{\eta}\Bigr)^\frac{1}{2} \; \left( \int_{\R[2]} \lvert \nabla u_\eta \rvert^2 dx \right)^\frac{1}{2}.
\end{align}

  \item On $\Omega_T$, the stray-field energy can be estimated using the trace characterization \eqref{h1trac} and the Cauchy-Schwarz inequality in Fourier space. Indeed, defining $m_{1,\eta}^\text{tails} \colon\Omega\to \R$ by setting $m_{1,\eta}^\text{tails}= m_{1,\eta}$ on $\Omega\setminus (\Omega_A\cup \Omega_I)$ and
$m_{1,\eta}^\text{tails} =\cos \theta_m$ on $\Omega_A\cup \Omega_I$, we have  
\begin{align}
\nonumber
\Bigl\lvert\int_{\Omega_T} \! \nabla \cdot m_\eta'\, u_\eta\, dx\Bigr\rvert &=\Bigl\lvert \int_{\Omega_T} \! \tfrac{d}{dx_1} m_{1, \eta} \, u_\eta \, dx\Bigr\rvert\\
\nonumber&= \int_{-1}^1 \int_{\R} \! \tfrac{d}{dx_1} m_{1,\eta}^\text{tails} \, u_\eta \, dx_1 \, dx_3\\
\nonumber&\leq \int_{-1}^1 \int_{\R} \! \lvert k_1 \rvert \lvert \ft_{x_1} m_{1,\eta}^\text{tails}(k_1) \rvert \, \lvert \ft_{x_1} u_\eta(k_1,x_3) \rvert \, dk_1 \, dx_3\\
\nonumber&\leq \int_{-1}^1 \Biggl( \int_{\R} \! \lvert k_1 \rvert \lvert \ft_{x_1} m_{1,\eta}^\text{tails}(k_1) \rvert^2 \, dk_1 \int_{\R} \lvert k_1 \rvert \lvert \ft_{x_1} u_\eta(k_1,x_3) \rvert^2 \, dk_1 \Biggr)^\frac{1}{2} dx_3\\
\label{str3}
&\stackrel{\eqref{h1trac}}{\leq} \left( \int_{\R[2]} \lvert \nabla u_\eta \rvert^2 dx \right)^\frac{1}{2} \left( 2 \int_{\R} \left\lvert \lvert \tfrac{d}{dx_1} \rvert^\frac{1}{2} m_{1,\eta}^\text{tails} \right\rvert^2 dx_1 \right)^\frac{1}{2}.
\end{align}

By considering the radial extension $M_{1,\eta}(x) = m_{1,\eta}^\text{tails}(\lvert x \rvert)$ of $m_{1,\eta}^\text{tails}$ on $\R\times\R_+$, which is possible since $m_{1,\eta}^\text{tails}$ is even, and using polar coordinates we then can estimate
\begin{align*}
  \int_{\R} \left\lvert \lvert \tfrac{d}{dx_1} \rvert^\frac{1}{2} m_{1,\eta}^\text{tails} \right\rvert^2 dx_1 &\stackrel{\eqref{h1trac}}{\leq} \int_{\R\times\R_+} \lvert \nabla M_{1,\eta} \rvert^2 dx\\
  &\leq \pi \int_{a+1}^{\frac{a+1}{\eta}} \lvert \tfrac{d}{dx_1} m_{1,\eta}^\text{tails} \rvert^2 \, x_1 dx_1\\
&= \pi \frac{\bigl( \cos\theta_m-\cos\alpha \bigr)^2}{\ln^2\frac{1}{\eta}} \underbrace{\int_{a+1}^{\frac{a+1}{\eta}} \! \tfrac{1}{x_1} \, dx_1}_{= \ln\frac{1}{\eta}}\\
&= \pi \frac{\bigl( \cos\theta_m - \cos\alpha \bigr)^2}{\ln\frac{1}{\eta}}.
\end{align*}
\end{itemize}

Collecting \eqref{str1}, \eqref{str2}, and \eqref{str3}, it follows that
\begin{align*}
\lambda \ln\tfrac{1}{\eta} \int_{\Omega} \lvert \nabla u_\eta \rvert^2 dx &\leq \lambda \ln\tfrac{1}{\eta} \biggl( C\ln^{-\frac{3}{4}}\tfrac{1}{\eta} + \Bigl( 2\pi \frac{( \cos\theta_m - \cos\alpha )^2}{\ln\frac{1}{\eta}} \Bigr)^\frac{1}{2} \biggr)^2\\
&\leq 2\pi\lambda \bigl( \cos\theta_m - \cos\alpha\bigr)^2 + C \ln^{-\frac{1}{4}} \tfrac{1}{\eta},
\end{align*}
i.e. \eqref{eq:strayfieldenergy}.\footnote{In fact, this motivates the choice $b=\ln^\gamma\frac{1}{\eta}$ with $\gamma>1$.}

\textbf{Step 5:} \textit{Anisotropy energy estimate.}
Finally, we prove
\begin{align}\label{eq:anisotropyenergy}
  \eta \int_\Omega \bigl( m_{1,\eta} - \cos\alpha \bigr)^2 + m_{3,\eta}^2\, dx = \so(1).
\end{align}

Indeed, since $a\sim b=\ln^\frac{3}{2}\tfrac{1}{\eta}$, on $\Omega_A\cup \Omega_I$ we have
\begin{align*}
  \eta \int_{-(a+1)}^{a+1} \int_{-1}^1 \bigl( m_{1,\eta} -\cos\alpha \bigr)^2 + m_{3,\eta}^2 \, dx_3\,dx_1 \leq C \eta \ln^\frac{3}{2}\tfrac{1}{\eta} = \so(1),
\end{align*}
and on $\Omega_T$, $m_{3, \eta}=0$ so that\footnote{Note that here it is important to have $b=\ln^\gamma\frac{1}{\eta}$ with $\gamma<2$.}
\begin{align*}
  \MoveEqLeft \eta \int_{\Omega_T} \! \bigl( m_{1,\eta} -\cos\alpha \bigr)^2 + m_{3,\eta}^2 \, dx\\
&= 4\eta \, \frac{\bigl( \cos\theta_m - \cos\alpha \bigr)^2}{\ln^2\tfrac{1}{\eta}} \int_{a+1}^{\frac{a+1}{\eta}} \! \ln^2(\tfrac{a+1}{\eta \, x_1})\, dx_1\\
&
\stackrel{y=\frac{\eta x_1}{a+1}}{\leq} C b \, \frac{\bigl( \cos\theta_m - \cos\alpha \bigr)^2}{\ln^2\tfrac{1}{\eta}} \underbrace{\int_\eta^{1} \! \ln^2 y\, dy}_{=\co(1)}\\
&\leq C \ln^{-\frac{1}{2}}\tfrac{1}{\eta} = \so(1).
\end{align*}

Moreover, on $\Omega\setminus (\Omega_A \cup \Omega_I \cup \Omega_T)$ we have $\bigl( m_{1,\eta} - \cos\alpha \bigr)^2 + m_{3,\eta}^2 = 0$ so that 
\eqref{eq:anisotropyenergy}
 holds.

\textbf{Step 6:} \textit{Conclusion.} Combining
\eqref{eq:exchangeenergy}, \eqref{eq:strayfieldenergy} and \eqref{eq:anisotropyenergy}, it follows that
\begin{align*}
E_\eta(m_\eta) &= \int_\Omega \lvert \nabla m_\eta \rvert^2 dx + \lambda \ln\tfrac{1}{\eta} \int_{\R[2]} \lvert h(m_\eta) \rvert^2 dx + \eta \int_\Omega 
\bigl(m_{1,\eta} - \cos\alpha \bigr)^2 + m_{3,\eta}^2\, dx\\
&\leq \int_\Omega \lvert \nabla m \rvert^2 dx + 2\pi\,\lambda\,\bigl( \cos\theta_m - \cos\alpha \bigr)^2 + \so(1) = E_0(m) + \so(1),
\end{align*}
which is \eqref{eq:upperbd}. Finally, let us prove that $m_\eta\to m$ in $\dot{H}^1(\Omega)$. First, observe that by construction, $m_\eta\equiv m$ on $\Omega_A$. Therefore, since $\bigcup_{\eta\tod 0}\Omega_A = \Omega$, $m_\eta\to m$ in $L^2_\text{loc}(\Omega)$. Moreover, \eqref{eq:exchangeenergy} implies that $\{m_\eta\}_{\eta\tod 0}$ is uniformly bounded in $\dot{H}^1(\Omega)$, so that $m_\eta\wto m$ in $\dot{H}^1(\Omega)$. By weak lower-semicontinuity of $\lVert \cdot \rVert_{L^2(\Omega)}$ and \eqref{eq:exchangeenergy}, one obtains
$\|\nabla m_\eta\|_{L^2(\Omega)}\to \|\nabla m\|_{L^2(\Omega)}$ and concludes that $m_\eta\to m$ in $\dot{H}^1(\Omega)$. 

\textbf{Case 2:} \textit{$\theta_m\in \{0,\pi\}$.} 
By Remark~\ref{rem_zero}, $m$ is constant, such that its exchange energy does not contribute to $E_0(m)$. Thus, we have to construct a sequence $m_\eta$ of asymptotically vanishing exchange energy, whose stray-field and anisotropy energy converge to $2\pi\,\lambda \, \bigl(\cos\theta_m - \cos\alpha\bigr)^2$. The function $m_\eta$ from Case~1 is a good candidate for the second property. However, if $\theta_m\in\{0,\pi\}$, it does not belong to $H^1(\Omega)$, since then $1-m^2_{1, \eta}$ behaves linearly w.r.t. the distance to the set $\{m^2_{1,\eta}=1\}$ and \eqref{eq:exchangeneel} fails. Therefore, we are obliged to construct a transition region between the two tails where this behavior is corrected.

With these considerations, we define $m_\eta \colon\Omega\to \mathbb{S}^1$ by
\begin{align*}
m_{1,\eta}(x_1,x_3) &\defas \begin{cases}
\cos\theta_m-\frac{1}{4} \bigl( \cos\theta_m - \cos\alpha \bigr) \frac{\ln 2}{\ln\frac{1}{\eta}} x_1^2, &\lvert x_1 \rvert \leq 2,\\
\cos\alpha + \bigl( \cos\theta_m - \cos\alpha \bigr) \frac{\ln(\frac{1}{\eta x_1})}{\ln{\frac{1}{\eta}}}, &2\leq\lvert x_1 \rvert \leq \frac{1}{\eta},\\
\cos\alpha, & \frac 1 \eta \leq |x_1|,
\end{cases}
\end{align*}
and again
\begin{align*}
m_{2,\eta}(x_1,x_3) &\defas \sgn(x_1) \sqrt{1-m_{1,\eta}^2(x_1,x_3)},\\
m_{3,\eta}(x_1,x_3) &\defas 0.
\end{align*}

Admissibility in $X^\alpha$ is obvious and one can then show, using the methods given above, that
\begin{align*}
\int_\Omega \lvert \nabla m_\eta \rvert^2 dx &\leq C \ln^{-1}\tfrac{1}{\eta},\\
\lambda \ln\tfrac{1}{\eta} \int_{\R[2]} \lvert \nabla u_\eta \rvert^2 dx &\leq 2\pi \, \lambda \, \bigl( \cos\theta_m - \cos\alpha \bigr)^2 + C \ln^{-\frac{1}{2}}\tfrac{1}{\eta},\\
\eta \int_\Omega \!\bigl(m_{1,\eta} - \cos\alpha\bigr)^2 + m_{3,\eta}^2\, dx &\leq C \ln^{-2}\tfrac{1}{\eta}.
\end{align*}
The strong convergence $m_\eta\to m$ in $\dot{H}^1(\Omega)$ also follows as in Step 6 of Case 1 by noting that the constructed transition layer $m_\eta$ has the property $m_\eta\to m=\bigl(\cos\theta_m,\pm\sin\theta_m, 0 \bigr)$ a.e. in $\Omega$. 

\textbf{Case 3:} \textit{$\theta_m=\alpha$.}
Since $m$ already has the correct boundary values, we can simply choose:
\begin{align*}
m_\eta \defas m.
\end{align*}

Admissibility of $m_\eta$ in $X^\alpha$ is clear and we can estimate:
\begin{align*}
\int_\Omega \lvert \nabla m_\eta \rvert^2 dx &= \int_\Omega \lvert\nabla m \rvert^2 dx,\\
\lambda \ln\tfrac{1}{\eta} \int_{\R[2]} \lvert h(m_\eta) \rvert^2 dx = 0 &=2\pi\, \lambda \, \bigl( \cos\alpha - \cos\alpha \bigr)^2,\\
\eta \int_\Omega \bigl(m_{1,\eta} - \cos\alpha \bigr)^2 + m_{3,\eta}^2 \, dx & = \so(1),
\end{align*}
since $m(\pm \infty,\cdot) = \bigl(\cos\alpha,\pm\sin\alpha, 0 \bigr)$.\hfill $\qquad\Box$

\bigskip

\begin{proof}[Proof of Corollary \ref{cor1}] The first equality in \eqref{eq:reducedmodel} is a direct consequence of the concept of $\Gamma$-convergence. Indeed, we know by Theorem \ref{thm:existence} that there exists a minimizer $m_\eta\in X^\alpha$ of $E_\eta$ for every $0<\eta< 1$. By Proposition \ref{prop:compactness1}, 
up to a subsequence and translation in $x_1$-direction, we have that $m_\eta \wto m$ in $\dot{H}^1(\Omega)$ for some $m\in X_0$ so that Theorem \ref{thm:gammalimlb} implies
$$\liminf_{\eta \downarrow 0} \min_{X^\alpha} E_\eta=\liminf_{\eta \downarrow 0} E_\eta(m_\eta)\geq E_0(m)\geq \min_{X_0} E_0.$$ On the other hand, Theorem \ref{thm:existence} also implies existence of a minimizer $m \in X_0$ of $E_0$. By Theorem \ref{thm:gammlimub}, there exists a family $\{\tilde m_\eta\}_{\eta\tod 0}\subset X^\alpha$ such that
$$\min_{X_0} E_0=E_0(m)\geq \limsup_{\eta \downarrow 0} E_\eta(\tilde m_\eta)\geq \limsup_{\eta \downarrow 0} \min_{X^\alpha} E_\eta.$$
Therefore, $\min_{X^\alpha} E_\eta\to \min_{X_0} E_0$ as $\eta\tod 0$. For the second equality in \eqref{eq:reducedmodel}, note that by Theorem \ref{thm:existence} one has
$$\min_{X_0} E_0=\min_{\theta\in[0,\pi]} \Bigl( \EA(\theta) + \lambda \, \ES(\alpha-\theta) \Bigr).$$ 
By Lemma~\ref{lem:lsceasym}, the minimum of the RHS is indeed attained. It remains to show that it is achieved for angles $\theta\in [0, \frac \pi 2]$. Indeed, let $\theta \in [0, \pi]$ be the minimizer of the above RHS and $m\in X_0$ with $\theta_m=\theta$ be the minimizer of $\EA(\theta)$. If $\theta \in (\frac \pi 2, \pi]$, then one considers $\tilde m\in X_0$ given by $\tilde m'\equiv -m'$ and $\tilde m_2\equiv m_2$, so that
$\theta_{\tilde m}=\pi-\theta_m\in [0, \frac \pi 2)$. Then $\tilde m$ and $m$ have the same exchange energy (i.e., $\EA(\theta_{\tilde m})=\EA(\theta_m)$)
and $\ES(\alpha-\theta_{\tilde m})\leq \ES(\alpha-\theta_{m})$ which proves \eqref{eq:reducedmodel}. Observe that the last inequality is strict whenever $\alpha\in (0, \frac \pi 2)$, so that for such angles $\alpha$ the minimal value of $E_0$ is achieved only for angles $\theta\in[0, \frac \pi 2]$.

Let us now prove the relative compactness in the strong $\dot{H}^1$-topology of minimizing families $\{m_\eta\}_{\eta\tod 0}\subset X^\alpha$ of $E_\eta$, i.e., which satisfy $E_\eta(m_\eta)\to \min_{X_0} E_0$. By Proposition~\ref{prop:compactness1}, 
up to a subsequence and translations in $x_1$-direction, we may assume that $m_\eta \wto m$ in $\dot{H}^1(\Omega)$ for some $m\in X_0$ so that Theorem \ref{thm:gammalimlb} implies \footnote{We use that $\limsup_n (a_n+b_n)\geq \limsup_n a_n+\liminf_n b_n$ for two bounded sequences $(a_n)$ and $(b_n)$.  }
\begin{align*}
\min_{X_0} E_0&=\lim_{\eta \tod 0} E_\eta(m_\eta)\\
&\geq \limsup_{\eta \tod 0} \int_\Omega \lvert \nabla m_\eta \rvert^2 dx\\
&\quad +\liminf_{\eta \tod 0} \left( \lambda \ln\tfrac{1}{\eta} \int_{\R[2]} \lvert h(m_\eta) \rvert^2 dx + \eta \int_\Omega \bigl( m_{1,\eta}- \cos\alpha \bigr)^2 dx \right)\\
&\stackrel{\mathclap{\eqref{eq:exchangeestimate}, \eqref{eq:sfestimate}}}{\geq}\quad \EA(\theta_m) + \lambda \ES(\alpha-\theta_m)\geq \min_{X_0} E_0.
\end{align*} 
Therefore, all above inequalities become equalities, in particular, $\lim_{\eta \tod 0} \int_\Omega \lvert \nabla m_\eta \rvert^2 dx=\int_\Omega \lvert \nabla m\rvert^2 dx$. Hence, one has $m_\eta \to m$ in $\dot{H}^1(\Omega)$, i.e. up to the subsequence taken in Proposition~\ref{prop:compactness1} and translations the entire family
$m_\eta$ converges strongly to $m$ in $\dot{H}^1(\Omega)$ where $m$ is a minimizer of $E_0$.
\end{proof}

\appendix
\section*{Appendix}
\section{Construction of an asymmetric-Bloch type wall of arbitrary wall angle $\theta\in(0,\frac{\pi}{2}]$}\label{sec:const}
In this section we construct a stray-field free domain wall for any given angle $\theta \in (0,\frac{\pi}{2}]$. In particular, this shows that the set $X_0\cap X^\theta$ is non-empty, and we may apply the direct method in the calculus of variations to deduce existence of minimizers of $\EA$ (cf. Theorem~\ref{thm:existence}).

The construction we present here is of asymmetric Bloch-wall type in the following sense: The trace of $m\in \dot{H}^1(\Omega, \mathbb{S}^2)$ on the boundary 
$
\overline{Bdry}:=\partial\Omega\cup\bigg(\{\pm\infty\}\times[-1,1]\bigg) \cong \mathbb{S}^1$ (see \eqref{not_bdry})
has a non-zero topological degree. In fact, due to $m_3=0$ on $\partial\Omega$ as well as $m_3(\pm\infty,\cdot)=0$ 
(so, $(m_1, m_2):\overline{Bdry}\to \mathbb{S}^1$), one obtains (by the homeomorphism \eqref{not_bdry}) a map $\tilde{m} \in H^\frac{1}{2}(\mathbb{S}^1, \mathbb{S}^1)$ to which a topological degree can be associated (see, e.g., \cite{brezisnirenberg95}).

\medskip
\begin{rem} \label{rem5}
  \begin{enumerate}
    \item Asymmetric Bloch walls as well as the configuration we are about to construct do have a non-zero topological degree (e.g. $\pm1$) on $\overline{Bdry}$, whereas asymmetric N\'eel walls have degree $0$ on $\partial \Omega$. We make the following observation (see Lemma~\ref{lem-topo}): the non-vanishing topological degree of $(m_1, m_2):\overline{Bdry}\to \mathbb{S}^1$ nucleates at least one vortex singularity of $(m_1, m_3)$ (carrying a non-zero topological degree) as illustrated in Figure \ref{asym}.

    \item Note that for the angle $\theta=0$, by Remark \ref{rem_zero}, one has that $m\in X_0$ if and only if $m\in \{\pm {\bf e}_1\}$, so that $m$ has degree zero on $\partial \Omega$; thus, no asymmetric-Bloch type wall exists in this case.
    \item  An asymmetric-N\'eel type configuration $\tilde{m}\in X_0\cap X^\theta$, i.e. with $\deg \tilde{m} = 0$, can be obtained from any $m\in X_0\cap X^\theta$ using even reflection in $(m_1,m_2)$ and odd reflection in $m_3$ across one of the components of $\partial \Omega$ together with a rescaling in $x$ so that $\tilde m$ is defined on $\Omega$. However, starting with $m\in \mathcal{L}^\theta$ (introduced at Section \ref{outlook}), the reflected configuration has at least two vortices in $(m_1,m_3)$, so that it cannot have minimal energy. In \cite{dioasymwalls12}, we construct an asymptotically energy minimizing configuration of asymmetric-N\'eel type for small angles.
    \end{enumerate}
\end{rem}

The degree argument shows that we cannot expect a homotopy between asymmetric N\'eel and Bloch wall in the class of stray-field free walls. Hence, it is unclear how the nevertheless expected transition from asymmetric N\'eel to Bloch wall actually takes place.

\medskip
\begin{prop}\label{prop:constr}
  Given $\theta\in(0,\frac{\pi}{2}]$, there exists a map $m\colon\Omega \to \mathbb{S}^2$ with the following properties:
  \begin{itemize}
    \item $m \in \dot{H}^1(\Omega)$,
    \item $m(x_1,\cdot) = m^\pm_\theta$ for all $\lvert x_1 \rvert$ sufficiently large,
    \item $\nabla \cdot m' = 0$ in $\Omega$ and $m_3 = 0$ on $\partial\Omega$,
    \item $\deg(m\big\vert_{\partial\Omega})=-1$.
  \end{itemize}
\end{prop}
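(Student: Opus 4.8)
The plan is to build $m$ explicitly on the rectangle-with-infinite-ends geometry of $\Omega$ by prescribing a divergence-free planar part $m'=(m_1,m_3)$ first, and then recovering $m_2=\pm\sqrt{1-m_1^2-m_3^2}$ on the two halves $\Omega_\pm$. The natural way to guarantee $\nabla\cdot m'=0$ in $\Omega$ together with $m_3=0$ on $\partial\Omega$ is to write $m'=\nabla^\perp\psi=(-\partial_{x_3}\psi,\partial_{x_1}\psi)$ for a stream function $\psi\colon\Omega\to\R$; then $m_3=\partial_{x_1}\psi=0$ on $\partial\Omega$ forces $\psi$ to be constant on each of the two horizontal edges $\{x_3=\pm1\}$, and these two constants may differ. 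I would choose $\psi$ to interpolate, as $x_1$ runs from $-\infty$ to $+\infty$, between two values that realize the boundary data $m'(\pm\infty,\cdot)=(\cos\theta,0)$: far out, $\psi(x_1,x_3)=-x_3\cos\theta+\text{const}$, so that $m'=(\cos\theta,0)$ there. In a bounded core $|x_1|\le R$ the function $\psi$ must deform through configurations in which $|m'|<1$ somewhere (so that $m_2\ne0$), and in particular $m'$ must vanish at (at least) one interior point — this is exactly the vortex of $(m_1,m_3)$ whose existence Lemma~\ref{lem-topo} predicts from the nonzero boundary degree. Concretely one can take $\psi$ so that near the wall center $(m_1,m_3)$ looks like a single $+1$ vortex $\tfrac{(x-x_0)}{|x-x_0|}$ smoothed out at the core, glued to the constant field $(\cos\theta,0)$ outside; the smoothing keeps $|m'|\le 1$ and keeps $|\nabla m|$ bounded, hence the exchange integral finite.

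Next I would fix the sign of $m_2$: set $m_2=+\sqrt{1-|m'|^2}$ on $\Omega\cap\{x_1>0\}$ and $m_2=-\sqrt{1-|m'|^2}$ on $\Omega\cap\{x_1<0\}$, taking care that $m'$ has $|m'|=1$ exactly on the interface $\{x_1=0\}$ so that $m_2$ is continuous (indeed $H^1$) across it; this is arranged by choosing $\psi$ so that $|m'|\equiv1$ on the line $x_1=0$ and $|m'|<1$ only off that line in a bounded region. With this choice $m(\pm\infty,\cdot)=m^\pm_\theta$, and $m$ is constant in $x_3$ for $|x_1|\ge R$, giving the second bullet. Finiteness of $\int_\Omega|\nabla m|^2$ follows from $m$ being smooth on the compact core, constant outside, plus the standard estimate $|\nabla m|^2\le C\,|\nabla m'|^2/(1-|m'|^2)$ controlled by the way $|m'|\to1$ linearly in the distance to the zero set's complement — here one must check the geometry of $|m'|$ near $\{x_1=0\}$ is benign, e.g. $1-|m'|\gtrsim x_1^2$, which makes the integrand integrable; this is the same mechanism as in Remark~\ref{rem:excep}(i) and in Case~2 of the upper-bound construction.

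Finally, the degree computation: on $\overline{Bdry}\cong\mathbb{S}^1$ the map $(m_1,m_2)$ is continuous; along the two edges $x_3=\pm1$ we have $m_3=0$ so $(m_1,m_2)$ traces a path on $\mathbb{S}^1$, along the ends $\{\pm\infty\}$ it sits at the single point $(\cos\theta,\pm\sin\theta)$. I would choose the phase of $(m_1,m_2)$ along $x_3=+1$ to wind monotonically by a net angle $2\pi$ more (or less) than along $x_3=-1$, the difference being controlled by the jump of the stream function between the two edges — this is why the construction is of asymmetric-Bloch type rather than asymmetric-N\'eel type. Parametrizing $\overline{Bdry}$ and adding up the four contributions gives $\deg(m|_{\partial\Omega})=-1$ (the sign being a matter of orientation convention; one flips $x_1\to-x_1$ if the opposite sign is wanted).

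The main obstacle I anticipate is not the degree bookkeeping but the \emph{simultaneous} constraints: $m'$ must be exactly divergence free in all of $\Omega$ (not just approximately), must satisfy $m_3=0$ on $\partial\Omega$, must have $|m'|\le1$ with $|m'|=1$ precisely on a curve across which $m_2$ changes sign, must approach $(\cos\theta,0)$ at the ends, \emph{and} must carry a nonzero winding on the boundary — while keeping the exchange energy finite. Producing a single explicit $\psi$ meeting all of these is the crux; I expect to do it in pieces (a vortex core, an annular transition, a far field) and then verify the gluing preserves $\nabla\cdot m'=0$ and the $\dot H^1$ regularity, the delicate point being the rate at which $|m'|$ reaches $1$ so that the exchange density stays integrable.
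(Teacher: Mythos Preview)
Your approach matches the paper's: build a stream function $\psi$ with $|\nabla\psi|\le 1$, set $m'=\nabla^\perp\psi$, and recover $m_2=\pm\sqrt{1-|\nabla\psi|^2}$ on the two sides of a curve $\gamma$ on which $|\nabla\psi|=1$ exactly. The Lipschitz regularity of $m_2$ then follows from the elementary fact that $\sqrt{f}$ is Lipschitz whenever $f\ge0$ is $C^2$ with bounded Hessian (this is the paper's Lemma~\ref{lem:m2h1}, and is indeed the mechanism behind your ``$1-|m'|\gtrsim x_1^2$''). A minor correction: Remark~\ref{rem:excep}(i) is a counter-example in which the exchange integral diverges, not an instance of this integrability mechanism.

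There is, however, a genuine geometric obstruction in your description. You place the vortex of $m'$ ``near the wall center'' and simultaneously ask that $|m'|=1$ exactly on the line $\{x_1=0\}$. These are incompatible: the vortex is a zero of $m'=\nabla^\perp\psi$, i.e.\ a critical point of $\psi$, so $|\nabla\psi|=0$ there, not $1$. Hence the curve $\gamma=\{|\nabla\psi|=1\}$ across which $m_2$ changes sign cannot be a straight line through the vortex; it must \emph{detour around it}. This is precisely what the paper does: the curve $\hat\gamma$ coincides with $\{\hat x_1=0\}$ away from the vortex but bends into an arc in an annulus about the critical point (Figures~\ref{fig:abwc}--\ref{fig:hatpsi} and \eqref{eq:defgamma}). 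Relatedly, a ``smoothed vortex glued to the constant field'' does not obviously have $|\nabla\psi|=1$ on any curve connecting the two components of $\partial\Omega$: for a purely radial $\psi=g(|x-x_0|)$ one has $|\nabla\psi|=|g'(r)|$, constant on circles, so $\{|\nabla\psi|=1\}$ is a union of circles, not a line. The paper resolves this by prescribing the level sets of $\psi$ to interpolate from circles (near the vortex) through ellipses to horizontal lines (far out) via an implicitly defined function $s(\hat x)$; the constraint $|\nabla\psi|\le 1$ is enforced pointwise by a convexity condition \eqref{eq:condt} on the semi-major axis, and equality on $\hat\gamma$ is read off from the explicit formula \eqref{eq:nabls}. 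Getting all of the requirements to hold simultaneously --- $|\nabla\psi|\le 1$ globally, $=1$ on a curve joining the two boundary components, $\partial_{x_1}\psi=0$ on $\partial\Omega$, and $\psi$ affine in $x_3$ for large $|x_1|$ --- is the technical core, and your sketch does not yet address it.
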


\begin{proof}
  To construct $m$, we will search for a stream function $\psi \colon \R[2] \to \R$ with the following properties:
  \begin{enumerate}
    \item $\psi \in C^3(\R[2])$ with $\lvert \nabla \psi \rvert \leq 1$ in $\R[2]$, \footnote{In fact, we will construct a smooth function $\psi$.}\label{eq:psic3n1}
    \item $\psi(x) = -(x_3+1)\cos\theta$ for all $\lvert x \rvert$ sufficiently large,\label{eq:psibcx1}
    \item $\psi(\cdot,-1)=0$ and $\psi(\cdot,1)=-2\cos\theta$ in $\R$,\label{eq:psibcx3}
    \item there exists a continuous curve $\gamma$, connecting the upper and lower components $\R\times\{+1\}$ and $\R\times\{-1\}$ of $\partial\Omega$, on which $\lvert \nabla \psi\rvert = 1$.\label{eq:psigamma}
  \end{enumerate}
  We then define $m$ according to
  \beq \label{def_magul}
    m'\defas\nabla^\perp\psi,\quad m_2(x) \defas
    \begin{cases}
      -\sqrt{1-\lvert \nabla \psi(x) \rvert^2},&\text{if $x$ is to the left of }\gamma,\\
      \sqrt{1-\lvert \nabla \psi(x) \rvert^2},&\text{if $x$ is to the right of }\gamma.
    \end{cases}
  \eeq
  Note that by Lemma~\ref{lem:m2h1} below, $m_2$ is Lipschitz continuous. Indeed, we remark that $D^2 (\lvert\nabla\psi\rvert^2)$ is globally bounded since $\lvert \nabla \psi \rvert = \cos\theta$ outside of a compact set and apply Lemma~\ref{lem:m2h1} to $f=1-\lvert \nabla \psi\rvert^2\geq 0$.

  Figure~\ref{fig:abwc} shows the level lines of $\psi$, in Figure~\ref{fig:abwv} the region around the vortex is enlarged.
  \begin{figure}[t]
    \centering
    \begin{pspicture}(-5,-1)(5,1)
      % domain and different subdomains
      \psline(-5,1)(5,1)
      \psline(-5,-1)(5,-1)
      \psline(-4,-1)(-4,1)
      \psline(4,-1)(4,1)
      \psline(-1,-1)(-1,1)
      \psline(1,-1)(1,1)
      \psline(-1,-0.25)(1,-0.25)
      \psline(-1,-0.75)(1,-0.75)
      % level lines for psi=0,-0.1,-0.2,-0.3,...,-0.9,-1
      % outer part
      \psset{linestyle=dotted,dotsep=0.5pt}
      \psline(-4.9,-0.8)(-4,-0.8)
      \psline(-4.9,-0.6)(-4,-0.6)
      \psline(-4.9,-0.4)(-4,-0.4)
      \psline(-4.9,-0.2)(-4,-0.2)
      \psline(-4.9,0.0)(-4,0.0)
      \psline(-4.9,0.2)(-4,0.2)
      \psline(-4.9,0.4)(-4,0.4)
      \psline(-4.9,0.6)(-4,0.6)
      \psline(-4.9,0.8)(-4,0.8)
      \psline(4.9,-0.8)(4,-0.8)
      \psline(4.9,-0.6)(4,-0.6)
      \psline(4.9,-0.4)(4,-0.4)
      \psline(4.9,-0.2)(4,-0.2)
      \psline(4.9,0.0)(4,0.0)
      \psline(4.9,0.2)(4,0.2)
      \psline(4.9,0.4)(4,0.4)
      \psline(4.9,0.6)(4,0.6)
      \psline(4.9,0.8)(4,0.8)
      % inner part
      \psline(-1,0.9)(1,0.9)
      \psline(-1,0.8)(1,0.8)
      \psline(-1,0.7)(1,0.7)
      \psline(-1,0.6)(1,0.6)
      \psline(-1,0.5)(1,0.5)
      \psline(-1,0.4)(1,0.4)
      \psline(-1,0.3)(1,0.3)
      \psline(-1,0.2)(1,0.2)
      \psline(-1,0.1)(1,0.1)
      \psline(-1,0.0)(1,0.0)
      \psline(-1,-0.1)(1,-0.1)
      \psline(-1,-0.2)(1,-0.2)
      \psline(-1,-0.8)(1,-0.8)
      \psline(-1,-0.9)(1,-0.9)
      % interpolation
      \psbezier(-4,0.8)(-3,0.8)(-2,0.9)(-1,0.9)
      \psbezier(-4,0.6)(-3,0.6)(-2,0.8)(-1,0.8)
      \psbezier(-4,0.4)(-3,0.4)(-2,0.7)(-1,0.7)
      \psbezier(-4,0.2)(-3,0.2)(-2,0.6)(-1,0.6)
      \psbezier(-4,0.0)(-3,0.0)(-2,0.5)(-1,0.5)
      \psbezier(-4,-0.2)(-3,-0.2)(-2,0.4)(-1,0.4)
      \psbezier(-4,-0.4)(-3,-0.4)(-2,0.3)(-1,0.3)
      \psbezier(-4,-0.6)(-3,-0.6)(-2,0.2)(-1,0.2)
      \psbezier(-4,-0.8)(-3,-0.8)(-2,0.1)(-1,0.1)
      \psbezier(-2.8,-1)(-2.8,-0.6)(-1.5,0.0)(-1,0.0)
      \psbezier(-1,-0.1)(-1.5,-0.1)(-2.9,-0.9)(-1,-0.9)
      \psbezier(-1,-0.2)(-1.4,-0.2)(-1.8,-0.8)(-1,-0.8)
      \psbezier(4,0.8)(3,0.8)(2,0.9)(1,0.9)
      \psbezier(4,0.6)(3,0.6)(2,0.8)(1,0.8)
      \psbezier(4,0.4)(3,0.4)(2,0.7)(1,0.7)
      \psbezier(4,0.2)(3,0.2)(2,0.6)(1,0.6)
      \psbezier(4,0.0)(3,0.0)(2,0.5)(1,0.5)
      \psbezier(4,-0.2)(3,-0.2)(2,0.4)(1,0.4)
      \psbezier(4,-0.4)(3,-0.4)(2,0.3)(1,0.3)
      \psbezier(4,-0.6)(3,-0.6)(2,0.2)(1,0.2)
      \psbezier(4,-0.8)(3,-0.8)(2,0.1)(1,0.1)
      \psbezier(2.8,-1)(2.8,-0.6)(1.5,0.0)(1,0.0)
      \psbezier(1,-0.1)(1.5,-0.1)(2.9,-0.9)(1,-0.9)
      \psbezier(1,-0.2)(1.4,-0.2)(1.8,-0.8)(1,-0.8)
      \psbezier(-1,-0.4)(-1.2,-0.4)(-1.3,-0.6)(-1,-0.6)
      \psbezier(1,-0.4)(1.2,-0.4)(1.3,-0.6)(1,-0.6)
      % vortex
      \pscircle[linestyle=solid,linewidth=0.2pt](0,-0.5){0.125}
      \pscircle(0,-0.5){0.1}
      \psbezier(0.05,-0.3)(-0.5,-0.3)(-0.5,-0.4)(-1,-0.4)
      \psbezier(0.05,-0.7)(-0.5,-0.7)(-0.5,-0.6)(-1,-0.6)
      \psbezier(-0.05,-0.3)(0.5,-0.3)(0.5,-0.4)(1,-0.4)
      \psbezier(-0.05,-0.7)(0.5,-0.7)(0.5,-0.6)(1,-0.6)
      % curve gamma
      \psset{linestyle=solid,linewidth=0.5pt}
      \psline(0,1)(0,-0.375)
      \psline(0,-0.625)(0,-1)
      \psbezier(0,-0.375)(0,-0.4)(-0.1,-0.4)(-0.1,-0.5)
      \psbezier(0,-0.625)(0,-0.6)(-0.1,-0.6)(-0.1,-0.5)
      \psline[linewidth=0.3pt](0.3,1.15)(0.05,0.55)
      \rput[b](0.3,1.2){\psscalebox{0.7}{$\gamma$}}
      % labels
      \rput[b](-1,1.05){\psscalebox{0.4}{$-2\bigl(1-\cos\theta\bigr)$}}
      \rput[b](1,1.05){\psscalebox{0.4}{$2\bigl(1-\cos\theta\bigr)$}}
      \rput[b](4,1.05){\psscalebox{0.7}{$L_\theta\gg 1$}}
      \rput[b](-4,1.05){\psscalebox{0.7}{$-L_\theta$}}
      \rput[r](-5,-1){\psscalebox{0.7}{$-1$}}
      \rput[r](-5,1){\psscalebox{0.7}{$1$}}
    \end{pspicture}
    \caption{Sketch of the level lines of a stream function of an asymmetric domain wall of Bloch type.}
    \label{fig:abwc}
  \end{figure}
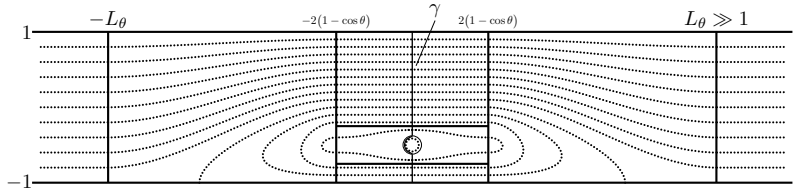

  \begin{figure}[t]
    \centering
    \begin{pspicture}(-4,-1)(4,1)
      \psframe(-4,-1)(4,1)
      % core
      \psline(0,1)(0,0.5)
      \psline(0,-1)(0,-0.5)
      \psbezier(0,0.5)(0,0.3)(-0.4,0.3)(-0.4,0)
      \psbezier(0,-0.5)(0,-0.3)(-0.4,-0.3)(-0.4,0)
      \pscircle[linewidth=0.5pt](0,0){0.5}
      \pscircle[linewidth=0.5pt](0,0){0.3}
      \rput[b](0.5,1.2){\psscalebox{0.7}{$\gamma$}}
      \psline[linewidth=0.3pt](0.5,1.15)(0.05,0.55)
      \psset{linestyle=dotted,dotsep=0.5pt}
      \pscircle(0,0){0.05}
      \pscircle(0,0){0.2}
      \pscircle(0,0){0.4}
      \pscircle(0,0){0.6}
      \psbezier(0,0.7)(-2.5,0.7)(-2.5,-0.7)(0,-0.7)
      \psbezier(0,0.7)(2.5,0.7)(2.5,-0.7)(0,-0.7)
      \psbezier(0,0.8)(-3.5,0.8)(-3,-0.7)(-4,-0.7)
      \psbezier(0,0.8)(3.5,0.8)(3,-0.7)(4,-0.7)
      \psbezier(0,-0.8)(-3.5,-0.8)(-3,0.7)(-4,0.7)
      \psbezier(0,-0.8)(3.5,-0.8)(3,0.7)(4,0.7)
      \psbezier(-4,0.85)(-3,0.85)(-3.5,0.9)(0.01,0.9)
      \psbezier(-4,-0.85)(-3,-0.85)(-3.5,-0.9)(0.01,-0.9)
      \psbezier(4,0.85)(3,0.85)(3.5,0.9)(-0.01,0.9)
      \psbezier(4,-0.85)(3,-0.85)(3.5,-0.9)(-0.01,-0.9)
      % labels
      \rput[b](-4,1.1){\psscalebox{0.7}{$-2\bigl(1-\cos\theta\bigr)$}}
      \rput[b](4,1.1){\psscalebox{0.7}{$2\bigl(1-\cos\theta\bigr)$}}
      \rput[r](-4.1,0.95){\psscalebox{0.7}{$\tfrac{1-\cos\theta}{2}$}}
      \rput[r](-4.1,-0.95){\psscalebox{0.7}{$-\tfrac{1-\cos\theta}{2}$}}
    \end{pspicture}
    \caption{Enlargement of the area around the vortex, cf. Figure~\ref{fig:abwc}.}
    \label{fig:abwv}
  \end{figure}

  \begin{figure}[h]
    \centering
    \begin{pspicture}(-3,-3)(3,3)
      \psset{unit=3cm}
      % circles
      \pscircle(0,0){0.25}
      \pscircle(0,0){0.125}
      \psset{linestyle=dotted,dotsep=0.3pt}
      \pscircle(0,0){0.2}
      \pscircle(0,0){0.15}
      \pscircle(0,0){0.09}
      \pscircle(0,0){0.04}
      \pscircle(0,0){0.01}
      \pscircle(0,0){0.001}
      % ellipses
      \psellipse(0,0)(0.4,0.30)
      \psellipse(0,0)(0.8,0.35)
      \psellipse(0,0)(1.8,0.40)
      \psellipse(0,0)(3.2,0.45)
      % linear parts
      \psline(-1,-0.55)(1,-0.55)
      \psline(-1,-0.60)(1,-0.60)
      \psline(-1,-0.65)(1,-0.65)
      \psline(-1,-0.70)(1,-0.70)
      \psline(-1,-0.75)(1,-0.75)
      \psline(-1,-0.80)(1,-0.80)
      \psline(-1,-0.85)(1,-0.85)
      \psline(-1,-0.90)(1,-0.90)
      \psline(-1,-0.95)(1,-0.95)
      \psline(-1,0.55)(1,0.55)
      \psline(-1,0.60)(1,0.60)
      \psline(-1,0.65)(1,0.65)
      \psline(-1,0.70)(1,0.70)
      \psline(-1,0.75)(1,0.75)
      \psline(-1,0.80)(1,0.80)
      \psline(-1,0.85)(1,0.85)
      \psline(-1,0.90)(1,0.90)
      \psline(-1,0.95)(1,0.95)
      % box
      \psset{linestyle=solid}
      \psframe*[linecolor=white](-4,-0.5)(-1,0.5)
      \psframe*[linecolor=white](1,-0.5)(4,0.5)
      \psline(-1,-1)(-1,1)
      \psline(1,-1)(1,1)
      \psline(-1,0.5)(1,0.5)
      \psline(-1,-0.5)(1,-0.5)
      \rput[t](-1,-1.05){\psscalebox{0.6}{$-1$}}
      \rput[t](1,-1.05){\psscalebox{0.6}{$1$}}
      \rput[r](-1.05,-0.5){\psscalebox{0.6}{$-\frac{1}{2}$}}
      \rput[r](-1.05,-0.25){\psscalebox{0.6}{$-\frac{1}{4}$}}
      \rput[r](-1.05,0.25){\psscalebox{0.6}{$\frac{1}{4}$}}
      \rput[r](-1.05,0.5){\psscalebox{0.6}{$\frac{1}{2}$}}
      % gamma
      \psline(0,1)(0,0.25)
      \psbezier(0,0.25)(0,0.15)(-0.2,0.15)(-0.2,0)
      \psbezier(0,-0.25)(0,-0.15)(-0.2,-0.15)(-0.2,0)
      \psline(0,-1)(0,-0.25)
      \psset{linewidth=0.005}
      \psline(0.02,0.725)(1.1,0.8)
      \rput[l](1.12,0.8){\psscalebox{0.7}{$\hat{\gamma}$}}
    \end{pspicture}
    \caption{Sketch of vortex function $s$ with ellipsoid level sets in the inner part.}
    \label{fig:hatpsi}
  \end{figure}
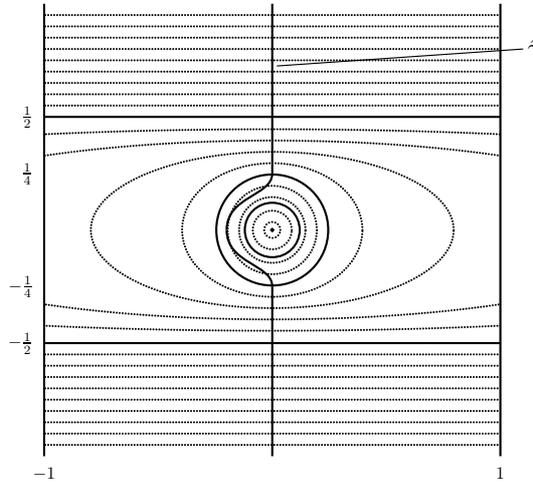

  \textbf{Step 1:} \textit{Construction in the inner part around the vortex.}
  As a first step in the construction of $\psi$ we implicitly define a function $s \in C^\infty(Q)$, $$Q=([-1,1]\times\R)\setminus \{(0,0)\},$$ by specifying its level sets (cf. Figure~\ref{fig:hatpsi}). Later, we will define $\psi$ by rescaling, shifting and smoothing the function $1-s$. Consider
  \begin{align*}
   f\colon Q\times (0, \infty)\to \R, \quad  f(\hat{x},s) \defas
    \begin{cases}
      \bigl(\tfrac{\hat{x}_3}{s}\bigr)^2 - 1, &\text{if } s\geq \tfrac{1}{2}, \hat{x}\in Q,\\
      \bigl(\tfrac{s}{t(s)}\bigr)^2 \bigl(\tfrac{\hat{x}_1}{s}\bigr)^2 + \bigl(\tfrac{\hat{x}_3}{s}\bigl)^2 - 1, &\text{if } 0<s< \tfrac{1}{2}, \hat{x}\in Q,\\
    \end{cases}
  \end{align*}
  where $t \colon (0,\tfrac{1}{2}) \to \R$ is a smooth function that satisfies the structural condition
  \begin{align}\label{eq:condt}
    t(s) = s \text{ if }s \in [0, \tfrac{1}{4}], \quad
    \tfrac{d^2}{ds^2} t\geq 0
    %\dds t \geq 1 
    \text{ on }(0,\tfrac{1}{2}),
  \end{align}
  and such that $\tfrac{1}{t(s)}$ vanishes to infinite order at $s=\tfrac{1}{2}$, e.g.
  \begin{align*}
    t(s) = e^\frac{1}{\frac{1}{2}-s} \text{ if } s\in [\tfrac{3}{8}, \tfrac 1 2].
  \end{align*}
  \begin{figure}
    \centering
    \begin{pspicture}(0,-0.5)(4,4)
      \psline{->}(0,0)(4,0)\rput[l](4.1,0){\psscalebox{0.7}{$s$}}
      \psline{->}(0,0)(0,4)\rput[b](0,4.1){\psscalebox{0.7}{$t$}}
      \psline(3,-0.1)(3,0.1)\rput[t](3.0,-0.2){\psscalebox{0.7}{$\tfrac{1}{2}$}}
      \psline(1.5,-0.1)(1.5,0.1)\rput[t](1.5,-0.2){\psscalebox{0.7}{$\tfrac{1}{4}$}}
      \psline(-0.1,1.5)(0.1,1.5)\rput[r](-0.2,1.5){\psscalebox{0.7}{$\tfrac{1}{4}$}}
      \psline[linestyle=dotted](1.5,0)(1.5,1.5)
      \psline[linestyle=dotted](0,1.5)(1.5,1.5)
      \psline[linestyle=dotted](3,0)(3,3.8)
      \psline(0,0)(1.5,1.5)
      \psbezier(1.5,1.5)(2.2,2.2)(2.8,3)(2.9,3.8)
    \end{pspicture}
    \caption{The semi-major axis $t\colon(0,\frac{1}{2})\to\R$ of the ellipses in Figure~\ref{fig:hatpsi}.}
  \end{figure}

  We note that the latter implies that $f$ is smooth across $s=\tfrac{1}{2}$ and thus in the whole domain $Q\times(0,\infty)$.

  {\it Claim: For every $\hat{x}\in Q$ there exists a unique solution $s=s(\hat{x})$ of $f(\hat{x},s)=0$.} 

  We first argue that the solution is unique: Indeed, because of \eqref{eq:condt} we have in particular $\dds t(s)\geq 1$ so that $\partial_s f(\hat{x},s)<0$ for all $(\hat{x},s)\in Q\times(0,\infty)$, provided we are not in the case of $s \geq \tfrac{1}{2}$ and $\hat{x}_3=0$. This case however is not relevant for uniqueness since then $f(\hat{x},s)\equiv -1$.

  It follows from the explicit form of $f$ that
  \begin{align*}
    s(\hat{x}) = \begin{cases}
      \lvert \hat{x} \rvert, &\text{for }\lvert \hat{x} \rvert \leq \tfrac{1}{4},\\
      \lvert \hat{x}_3 \rvert, &\text{for }\lvert \hat{x}_3 \rvert \geq \tfrac{1}{2},\\
    \end{cases}
  \end{align*}
  is a solution.

  Hence, it remains to show existence of a solution for $\lvert \hat{x}_3 \rvert < \tfrac{1}{2}$ but $\lvert \hat{x} \rvert > \tfrac{1}{4}$: Indeed, $\lvert \hat{x}_3 \rvert < \tfrac{1}{2}$ implies $f(\hat{x},\tfrac{1}{2})<0$ and $\lvert \hat{x} \rvert > \tfrac{1}{4}$ yields $f(\hat{x},\tfrac{1}{4})>0$. Thus, the existence of a solution $s=s(\hat{x})\in(\tfrac{1}{4},\tfrac{1}{2})$ of $f(\hat{x},s)=0$ follows from the intermediate value theorem.

  The implicit function theorem yields smoothness of $s \colon Q \to (0,\infty)$, with
  \begin{align}\label{eq:nabls}
    \hat{\nabla}s(\hat{x})=-\tfrac{\nabla_{\hat{x}} f(\hat{x},s(\hat{x}))}{\partial_s f(\hat{x},s(\hat{x}))} = \frac{(\frac{\hat{x}_1}{t^2(s)},\frac{\hat{x}_3}{s^2})}{\frac{\hat{x}_1^2}{t^2(s)}\frac{\frac{dt}{ds}}{t(s)} + \frac{\hat{x}_3^2}{s^2}\frac{1}{s}} \quad \text{if } |\hat{x}_3|\leq \tfrac 1 2 ,
  \end{align}
  and $\hat\nabla s(\hat{x})=\pm {\bf e}_3$ if $\pm \hat{x}_3\geq \tfrac 1 2$.
  Note that
  \begin{align*}
    \lvert \hat{\nabla} s(\hat{x})\rvert^2 = \frac{(\frac{\hat{x}_1}{t(s)})^2\frac{1}{t^2(s)} + (\frac{\hat{x}_3}{s})^2 \frac{1}{s^2}}{(\frac{\hat{x}_1^2}{t^2(s)}\frac{\frac{dt}{ds}(s)}{t(s)} + \frac{\hat{x}_3^2}{s^2}\frac{1}{s})^2} \leq \frac{\frac{1}{s^2}\bigl((\frac{\hat{x}_1}{t(s)})^2 + (\frac{\hat{x}_3}{s})^2 \bigr)}{\frac{1}{s^2} (\frac{\hat{x}_1^2}{t^2(s)} + \frac{\hat{x}_3^2}{s^2})^2} = 1 \quad \text{if }|\hat{x}_3|\leq \tfrac 1 2,
  \end{align*}
  since \eqref{eq:condt} yields $\tfrac{1}{t(s)} \leq \tfrac{1}{s}$ and $\tfrac{\frac{dt}{ds}(s)}{t(s)}\geq \tfrac{1}{s}$ whenever $s\in (0, \tfrac 1 2)$.

  Let us finally remark that the curve 
  \begin{align}\label{eq:defgamma}
    \hat{\gamma} \subset \{0\} \times \{\lvert x_3\rvert \geq \tfrac{1}{4} \} \cup \bigl( B(0,\tfrac{1}{4})\setminus B(0,\tfrac{1}{16}) \bigr)
  \end{align}
  which is indicated in Figure~\ref{fig:hatpsi}, has the property
  \begin{align*}
    \lvert \hat{\nabla} s \rvert = 1 \quad \text{on }\hat{\gamma}.
  \end{align*}
  
  \textbf{Step 2:} \textit{Regularization of the vortex at $\hat{x}=0$.} In this step, we define a function $\hat{\psi}_1$ on $[-1,1]\times \R$ that -- up to rescaling and recentering -- already coincides with the final $\psi$ close to $\{x_1=0\}$. The subsequent steps 3-6 modify $\hat{\psi}_1$ for large $\hat{x} \in \R[2]$ to achieve the boundary conditions for $\lvert x_1\rvert \to \infty$ and to make Lemma~\ref{lem:m2h1} applicable.
  
  In principle, we would like to set $\hat{\psi}_1=1-s$, but since $s$ is not smooth in $\hat{x}=0$ this would generate a vortex-type point-singularity at $\hat{x}=0$ for $\hat{\nabla}^\perp \hat{\psi}_1$. Instead, let $\rho \colon [0,\infty)\to \R$ be a smooth function that satisfies
  \begin{align*}
    \rho(s) = 1 - s \text{ if }s\geq \tfrac{1}{16},\quad -1\leq \tfrac{d\rho}{ds}(s)\leq 0 \text{ if }s\geq 0, \quad \tfrac{d^n\rho}{ds^n}(0)=0 \text{ for all integers } n>0.
  \end{align*}

  Then the function $$\hat{\psi}_1(\hat{x}) \defas \rho\bigl(s(\hat{x})\bigr), \quad \hat{x}\in Q,$$ is smooth, satisfies $\lvert \hat{\nabla} \hat{\psi}_1 \rvert = \lvert \dds\rho \rvert \, \lvert \hat{\nabla} s \rvert \leq 1$ and can be extended to a smooth function $\hat{\psi}_1$ on $[-1,1]\times\R$ by setting $\hat{\psi}_1(\hat{x}=0)\defas \rho(s=0)$. The regularity of $\hat{\psi}_1$ around $\hat{x}=0$ is due to $s(\hat{x})=|\hat{x}|$ for $\lvert\hat{x}\rvert \leq \tfrac{1}{4}$ and $\frac{d^n\rho}{ds^n}(0)=0$ for all $n>0$.
  
  Note that by definition of $\rho$ and $s$ we still have
  \begin{align*}
    \lvert \hat{\nabla} \hat{\psi}_1 \rvert = 1 \quad \text{on }\hat{\gamma},
  \end{align*}
  for $\hat{\gamma}$ as in \eqref{eq:defgamma}.

  \textbf{Step 3:} \textit{Extending $\hat{\psi}_1$ to $\R[2]$.} Here, we use $\hat{\psi}_1$ (defined on $[-1,1]\times\R$) to define a smooth function $\hat{\psi}_2$ on $\R[2]$ with the properties
  \begin{align}\label{eq:proppsi2}
    \partial_{\hat{x}_1}\hat{\psi}_2 = 0 \text{ if }\lvert \hat{x}_1 \rvert \geq 2,\quad \lvert \hat{\nabla} \hat{\psi}_2 \rvert \leq 1 \text{ on }\R[2],\quad \lvert \hat{\nabla} \hat{\psi}_2 \rvert = 1 \text{ on }\hat{\gamma}.
  \end{align}
  
  Let $\varphi\colon \R \to[-1,1]$ be a smooth odd non-linear change of variables (cf. Figure~\ref{fig:cov}) with
  \begin{gather*}
    \varphi(s)=s \text{ on } (0,\tfrac{1}{4}),\quad\varphi(s)=1 \text{ if } s\geq 2, \quad 0<\dds\varphi(s)\leq 1 \text{ on } (0,2).\quad %\lvert\dds\varphi(s)\rvert \leq 1 \text{ on }(0,2).
  \end{gather*}

  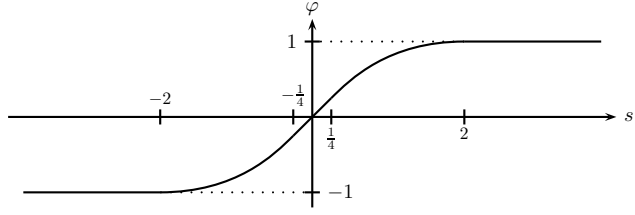
\begin{figure}
    \centering
    \begin{pspicture}(-4,-1.1)(4,1.1)
      \psline{->}(-4,0)(4,0)
      \psline{->}(0,-1.2)(0,1.3)
      \psline(-0.25,-0.1)(-0.25,0.1)
      \psline(0.25,-0.1)(0.25,0.1)
      \psline(-2,-0.1)(-2,0.1)
      \psline(2,-0.1)(2,0.1)
      \psline(-0.25,-0.25)(0.25,0.25)
      \psbezier(-2,-1)(-1,-1)(-0.5,-0.5)(-0.25,-0.25)
      \psbezier(2,1)(1,1)(0.5,0.5)(0.25,0.25)
      \psline(-3.8,-1)(-2,-1)
      \psline(3.8,1)(2,1)
      \rput[b](0,1.35){\psscalebox{0.7}{$\varphi$}}
      \rput[l](4.1,0){\psscalebox{0.7}{$s$}}
      \rput[b](-0.25,0.15){\psscalebox{0.6}{$-\tfrac{1}{4}$}}
      \rput[t](0.25,-0.15){\psscalebox{0.6}{$\tfrac{1}{4}$}}
      \rput[t](2,-0.15){\psscalebox{0.6}{$2$}}
      \rput[b](-2,0.15){\psscalebox{0.6}{$-2$}}
      \psline(-0.1,1)(0.1,1)\rput[r](-0.2,1){\psscalebox{0.7}{$1$}}
      \psline(-0.1,-1)(0.1,-1)\rput[l](0.2,-1){\psscalebox{0.7}{$-1$}}
      \psline[linestyle=dotted](0.1,1)(2,1)
      \psline[linestyle=dotted](-0.1,-1)(-2,-1)
    \end{pspicture}
    \caption{The non-linear change of variables $\varphi$.}
    \label{fig:cov}
  \end{figure}

  Then we let
  \begin{align*}
    \hat{\psi}_2(\hat{x}) \defas \hat{\psi}_1(\varphi(\hat{x}_1),\hat{x}_3) \quad\text{for }\hat{x}_1\in\R[2],
  \end{align*}
  such that the properties \eqref{eq:proppsi2} are easily verified.

  \textbf{Step 4:} \textit{Matching the boundary conditions on $\partial\Omega$.} In this step, we rescale and recenter $\hat{\psi}_2$ according to Figure~\ref{fig:abwc} to achieve the boundary conditions $0$ and $-2\cos\theta$ on the lower and upper components of $\partial\Omega$, i.e. \eqref{eq:psibcx3}.

  More precisely, we want to obtain \eqref{eq:psibdry} below. Since $\hat{\psi}_2(\hat{x})=1-\lvert \hat{x}_3 \rvert$ for $\lvert\hat{x}_3\rvert\geq \tfrac{1}{2}$, we place the center of the ``regularized vortex'' $\hat{x}=0$ of $\hat{\psi}_2$ at $x_\theta=\bigl(0,-\cos\theta\bigr)$, and thereby define the smooth function:
  \begin{align*}
    \psi_2(x) \defas \bigl(1-\cos\theta\bigr) \, \hat{\psi}_2(\hat{x}) \quad \text{for } x\in \R[2],  
  \end{align*}
  where $\hat{x}$ is related to $x$ via
  \begin{align}
    x = x_\theta + (1-\cos\theta) \hat{x}.\label{eq:cov}
  \end{align}
Then
\begin{align}\label{eq:psibdry}
  \psi_2(x)=1-\cos\theta -\lvert x_3 + \cos\theta \rvert\quad\text{on}\quad\bigl\{ \lvert x_3 + \cos\theta \rvert \geq \tfrac{1-\cos\theta}{2}\bigr\} \supset \partial\Omega,
\end{align}
such that the boundary conditions hold. Moreover, we have $\lvert \nabla \psi_2 \rvert \leq 1$ in $\R[2]$ as well as $\lvert \nabla \psi_2 \rvert = 1$ on the curve $\gamma$ that $\hat{\gamma}$ induces via the change of variables \eqref{eq:cov}, cf. Figure~\ref{fig:abwc}.

Note that $\psi_2$ only depends on $x_3$ for $\lvert x_1 \rvert \geq 2(1-\cos\theta)$. However, \eqref{eq:psibcx1} does not yet hold.

\textbf{Step 5:} \textit{Controlling the behavior for $\lvert x_3 \rvert \gg 1$.} 
To allow for an application of Lemma~\ref{lem:m2h1} we want to obtain \eqref{eq:psibcx1}, in particular bounded second derivatives of $f=1-\lvert \nabla \psi \rvert^2$. To this end, we will first interpolate $\psi_2$ in $x_3$ with the boundary data 
  \begin{align*}
    \psi_\text{out}\defas -(x_3+1)\cos\theta
  \end{align*}
  for $\lvert x_3 \rvert \gg 1$. In Step 6, we will then interpolate with $\psi_\text{out}$ in $x_1$.

  We proceed in two steps: First, we employ a regularized $\max(\tilde{t},t)$-function to modify $\psi_2$ outside of $\Omega$ to make sure that the slope of $\psi_2$ agrees with that of $\psi_\text{out}$ for large $\lvert x_3 \rvert$. Then, since $\lvert \partial_{x_3} \psi_\text{out} \rvert = \cos\theta < 1$, we can use interpolation with a slowly varying cut-off function to define a new function $\psi_\text{in}$ that coincides with $\psi_\text{out}$ for $\lvert x_3 \rvert \gg 1$ and still satisfies $\lvert \nabla \psi_\text{in} \rvert \leq 1$, cf. Figure~\ref{fig:interppsi}.

  \begin{figure}
    \centering
    \begin{pspicture}(-7,-4)(7,1)
      \psline{->}(-6.5,0)(6.5,0)\rput[l](6.6,0){\psscalebox{0.8}{$x_3$}}
      \psline{->}(0,-4)(0,1)\rput[b](0,1.1){\psscalebox{0.8}{$\psi$}}
      \psline(-1,-0.1)(-1,0.1)\rput[b](-1.05,0.15){\psscalebox{0.7}{$-1$}}
      \psline(1,-0.1)(1,0.1)\rput[b](1,0.15){\psscalebox{0.7}{$1$}}
      \psline(-0.1,-0.5)(0.1,-0.5)\rput[r](-0.15,-0.5){\psscalebox{0.7}{$-2\cos\theta$}}
      \rput(-2,1){\psscalebox{0.8}{$\psi_\text{out}$}}\psline[linewidth=0.01](-2.35,0.8)(-2.6,0.5)
      \rput(-1.5,-2){\psscalebox{0.8}{$\psi_2$}}\psline[linewidth=0.01](-1.65,-1.9)(-2.35,-1.45)
      \rput(5.2,-2.2){\psscalebox{0.8}{$\psi_\text{in}$}}\psline[linewidth=0.01](5,-2.1)(4.55,-1.95)
      \rput(5.4,-3.1){\psscalebox{0.8}{asymptote of $\psi_3$}}\psline[linewidth=0.01](5.25,-2.95)(4.55,-2.45)
      % for \cos\theta = 0.25
      % \psi_\text{out} - 1
      \psline[linestyle=dashed](-6,1.25)(6,-1.75)
      \psline[linestyle=dotted](-6,0.25)(6,-2.75)
      % \psi_2
      \psbezier(-1,0)(-0.25,0.75)(-0.25,0.75)(1,-0.5)
      \psline[linestyle=dashed](-4,-3)(-1,0)
      \psline[linestyle=dashed](1,-0.5)(4,-3.5)
      % interpolation
      \psbezier(-1,0)(-1.8,-0.8)(-1.8,-0.8)(-3,-0.5)
      \psbezier(-3,-0.5)(-4,-0.25)(-5,1)(-6,1.25)
      \psbezier(1,-0.5)(2.33,-1.833)(2.33,-1.833)(3,-2)
      \psbezier(3,-2)(4,-2.25)(5,-1.5)(6,-1.75)
    \end{pspicture}
    \caption{Sketch of the functions $\psi_2(x_1,\cdot)$ (for $\lvert x_1 \rvert$ large, fixed) and $\psi_\text{out}$, as well as their interpolant $\psi_\text{in}$.}
    \label{fig:interppsi}
  \end{figure}
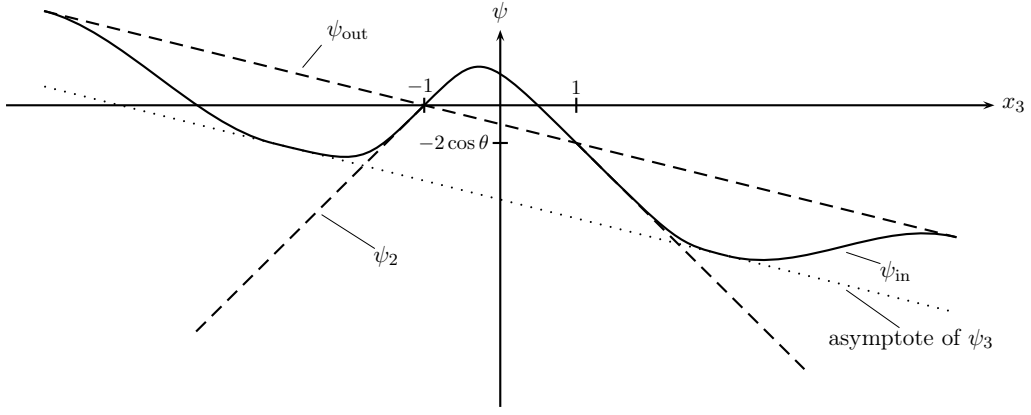

  Let $\eta \colon \R \to [0,1]$ be a smooth, increasing cut-off function with $\eta\equiv 0$ on $\R_-$, $\eta\equiv 1$ on $[1,\infty)$, and $\lVert \dds \eta \rVert_\infty < \infty$. To regularize 
  \begin{align*}
    \max(\tilde{t},t)=\tilde{t}+\max(0,t-\tilde{t})=\tilde{t}+\int_0^{t-\tilde{t}} \mathbf{1}_{[0,\infty)}(s) \,ds
  \end{align*}
  we replace $\mathbf{1}_{[0,\infty)}$ by $\eta$, i.e. we define a smooth $h\colon \R\times\R \to \R$ via
  \begin{gather}\label{eq:defh}
    h(\tilde{t},t) \defas \tilde{t} + \int_0^{t-\tilde{t}} \! \eta(s) \, ds.
  \end{gather}
  Observe that 
  \begin{align}\label{eq:proph}
    h(\tilde{t},t)=\tilde{t} \text{ if }\tilde{t}\geq t,\quad \text{and}\quad h(\tilde{t},t)=\int_0^1 \eta(s) \, ds - 1 + t,\text{ if }t\geq \tilde{t}+1.
  \end{align}
    Moreover, $\partial_{\tilde{t}} h(\tilde{t},t) = 1-\eta(t-\tilde{t})$ and $\partial_t h(\tilde{t},t)=\eta(t-\tilde{t})$.

  Hence, the function $\psi_3 \colon \R[2]\to\R$ given by
  \begin{gather*}
    \psi_3(x) \defas \begin{cases}
      \psi_2(x), &\text{for } x\in\Omega,\\
      h\bigl(\psi_2(x),\psi_\text{out}(x)-1\bigr), &\text{otherwise},
    \end{cases}
  \end{gather*}
  is smooth and satisfies
  \begin{gather}
    \psi_3 \stackrel{\eqref{eq:proph}}{\equiv} \psi_2 \text{ on }\Omega, \quad \psi_3-\psi_\text{out} \stackrel{\eqref{eq:proph}}{=} \int_0^1\eta ds - 2\text{ if }\lvert x_3 \rvert \stackrel{\eqref{eq:psibdry}}{\geq} 1+\tfrac{2}{1-\cos\theta} \asdef M_\theta,\label{eq:proppsi3}\\
    \lvert \nabla \psi_3 \rvert \leq 1 \text{ on }\Omega,\quad\partial_{x_3}\psi_3 = 0 \text{ for }\lvert x_3 \rvert \geq 2,\notag\\
    \lvert \nabla \psi_3 \rvert \leq \bigl( 1-\eta(\psi_\text{out}-1 -\psi_2) \bigr) \lvert \nabla \psi_2 \rvert + \eta(\psi_\text{out}-1-\psi_2) \lvert \nabla \psi_\text{out} \rvert \leq 1 \text{ on }\R[2]\setminus\Omega.\notag
  \end{gather}

  It remains to interpolate $\psi_3$ and $\psi_\text{out}$: For $L\geq M_\theta+1$, consider the slowly varying cut-off function $\eta_L \colon \R_+ \to [0,1]$ given by
  \begin{gather*}
    \eta_L(t)\defas \eta^2(\tfrac{t-M_\theta}{L-M_\theta}).
  \end{gather*}
  Then
  \begin{gather}
    \eta_L(t)=0 \text{ if } t\leq 2, \quad \eta_L(t)=1 \text{ if } t\geq L, \quad (\tfrac{d}{dt} \eta_L )^2 \leq \tfrac{C(\theta)}{L^2} \eta_L \leq \tfrac{C(\theta)}{L^2} \text{ on }\R_+,\label{eq:propcutoff}
  \end{gather}

  and we define the smooth function $\psi_\text{in} \colon\R[2]\to\R$ by
  \begin{align*}
    \psi_\text{in}(x) \defas \eta_L(\lvert x_3 \rvert) \psi_\text{out}(x) + \bigl(1-\eta_L(\lvert x_3 \rvert)\bigr) \psi_3(x),
  \end{align*}
  There exists $L(\theta)$ such that for any $L\geq L(\theta)$ we have $\lvert \nabla \psi_\text{in} \rvert \leq 1$ on $\R[2]$: Note that $\lvert \nabla \psi_\text{in} \rvert = \lvert \nabla \psi_3 \rvert \leq 1$ on $\{\lvert x_3 \rvert \leq M_\theta\}$, while on $\{\lvert x_3 \rvert \geq M_\theta \}$ we have $\psi_\text{in}\stackrel{\eqref{eq:proppsi3}}{=} \psi_\text{out} - (2-\int_0^1\eta ds)(1-\eta_L(\lvert x_3 \rvert ))$, such that due to $\nabla \psi_\text{out} = (0, -\cos\theta)$
  \begin{align*}
    \lvert \nabla \psi_\text{in} \rvert \leq \cos\theta + C\tfrac{d}{dt} \eta_L \stackrel{\eqref{eq:propcutoff}}{\leq} \cos\theta + \tfrac{C}{L} \leq 1 \text{ for $L$ sufficiently large}.
  \end{align*}
  Moreover, $\partial_{x_1} \psi_\text{in} = 0$ for $\lvert x_1 \rvert \geq 2(1-\cos\theta)$.

  \textbf{Step 6:} \textit{Interpolation with the boundary conditions at $x_1=\pm\infty$.}
  In order to obtain \eqref{eq:psibcx1} it now remains to interpolate $\psi_\text{in}$ with the boundary data $\psi_\text{out}$ for $\lvert x_1 \rvert \gg 1$.

  For this, we again consider the cut-off function $\eta_L \colon \R_+ \to [0,1]$ with properties \eqref{eq:propcutoff} and define the desired smooth $\psi\colon\R[2]\to\R$ by
  \begin{align*}
    \psi(x) \defas \eta_L(\lvert x_1 \rvert) \psi_\text{out}(x) + \bigl(1-\eta_L(\lvert x_1 \rvert)\bigr) \psi_\text{in}(x).
  \end{align*}

  Clearly, $\psi$ satisfies the boundary conditions on $\partial\Omega$ as well as $\psi = \psi_\text{out}$ for $\lvert x \rvert \gg 1$. In the core region $\{\lvert x_1 \rvert \leq M_\theta\}\cap \Omega$ we have $\psi=\psi_\text{in}=\psi_3=\psi_2$ and therefore $\lvert \nabla \psi \rvert = 1$ on the curve $\gamma$ defined in Step 4. Moreover, $\lvert \nabla \psi \rvert \leq 1$ on $\{\lvert x_1 \rvert \leq M_\theta\} \cup \{\lvert x_1 \rvert \geq L\}$.
  
  For sufficiently large $L \geq L(\theta)$ we can also assert $\lvert \nabla \psi \rvert \leq 1$ on $\{M_\theta \leq \lvert x_1 \rvert\leq L \}$: In fact, we have 
  $$\nabla \psi = \left( \sgn(x_1) (\psi_\text{out}-\psi_\text{in})\tfrac{d}{dt} \eta_L , -\eta_L \cos\theta + (1-\eta_L) \partial_{x_3} \psi_\text{in} \right),$$
  where we used that $\partial_{x_1}\psi_\text{in}(x)=0$ on $\left\{\lvert x_1 \rvert \geq M_\theta \right\}$.
  Hence, by convexity of $z\mapsto z^2$, $\eta_L\in[0,1]$, and $\lvert\partial_{x_3}\psi_\text{in}\rvert\leq 1$:
  \begin{align*}
    \lvert \nabla \psi \rvert^2 &\leq (\tfrac{d}{dt} \eta_L)^2 \underbrace{\sup \lvert \psi_\text{out} - \psi_\text{in} \rvert^2}_{\mathclap{\leq 4\text{ by def. of $\psi_\text{in}$ and \eqref{eq:defh}}}} + \bigl((1-\eta_L)\partial_{x_3} \psi_\text{in} + \eta_L (-\cos\theta) \bigr)^2\\
    &\leq  C (\tfrac{d}{dt} \eta_L)^2 + (1-\eta_L) +\eta_L (\cos \theta)^2 \\
    &\stackrel{\mathclap{\eqref{eq:propcutoff}}}{\leq} 1 - \bigl(\sin^2 \theta - \tfrac{CC(\theta)}{L^2} \bigr)\eta_L\\
    &\leq 1\quad \text{if}\quad L\geq L(\theta) \text{ is sufficiently large.}
  \end{align*}
  \textbf{Step 7:} \textit{The degree of $(m_1,m_2)$.} Using $\eta_L(\lvert x_3 \rvert) = 0$ and $\psi_3=\psi_2\stackrel{\eqref{eq:psibdry}}{=}(1-\cos\theta) - \lvert x_3 + \cos\theta \rvert$ in a neighbourhood of $\partial \Omega$, we compute
  \begin{align*}
    \nabla \psi_\text{in}(x) &=  \partial_{x_3} \psi_3(x) \mathbf{e}_3 = - \sgn(x_3)\mathbf{e}_3 \quad \text{on }\partial\Omega,
  \end{align*}
  and therefore, due to $\psi_\text{out}=\psi_\text{in}$ on $\partial \Omega$
  \begin{align*}
    \nabla \psi(x) &= \eta_L(\lvert x_1\rvert) \nabla \psi_\text{out}(x) + \bigl(1-\eta_L(\lvert x_1 \rvert)\bigr) \nabla \psi_\text{in}(x)\\
    &= - \bigl(\eta_L(\lvert x_1 \rvert)\cos\theta + (1-\eta_L(\lvert x_1 \rvert)) \sgn(x_3) \bigr) \mathbf{e}_3 \quad \text{on }\partial\Omega,
  \end{align*}
  or in view of the definition of $m$ in \eqref{def_magul}:
  \begin{align*}
    m(x) = \bigl(\eta_L(\lvert x_1 \rvert)\cos\theta + (1-\eta_L(\lvert x_1 \rvert)) \sgn(x_3), \,\sgn(x_1) \sqrt{1-m_1^2(x)},\,0\bigr) \quad \text{on }\partial\Omega.
\end{align*}
Hence, $(m_1,m_2)$, as a map $\mathbb{S}^1 \to \mathbb{S}^1$, has degree~$-1$ on $\partial\Omega$, cf. Figure~\ref{fig:degofm}.
\begin{figure}
  \centering
  \begin{pspicture}(-4.5,-3)(4.5,2)
    \psline[linestyle=dashed](-3.5,-1.5)(-3.5,1.5)
    \psline[linestyle=dashed](3.5,-1.5)(3.5,1.5)
    \psline(-3.5,1.5)(3.5,1.5)
    \psline(-3.5,-1.5)(3.5,-1.5)
    \rput{-30}(-4.1,0){\pscircle(0,0){0.5}\psline[linewidth=0.07]{->}(0,0)(0.5,0)}
    \rput{-120}(-2,-2.1){\pscircle(0,0){0.5}\psline[linewidth=0.07]{->}(0,0)(0.5,0)}
    \rput{180}(0,-2.1){\pscircle(0,0){0.5}\psline[linewidth=0.07]{->}(0,0)(0.5,0)}
    \rput{120}(2,-2.1){\pscircle(0,0){0.5}\psline[linewidth=0.07]{->}(0,0)(0.5,0)}
    \rput{30}(4.1,0){\pscircle(0,0){0.5}\psline[linewidth=0.07]{->}(0,0)(0.5,0)}
    \rput{-15}(-2,2.1){\pscircle(0,0){0.5}\psline[linewidth=0.07]{->}(0,0)(0.5,0)}
    \rput{0}(0,2.1){\pscircle(0,0){0.5}\psline[linewidth=0.07]{->}(0,0)(0.5,0)}
    \rput{15}(2,2.1){\pscircle(0,0){0.5}\psline[linewidth=0.07]{->}(0,0)(0.5,0)}
    \rput(0,0){$\Omega=\R\times[-1,1]$}
    \psline(2.5,2.3)(3.4,2.9)
    \rput[l](3.5,3.1){$(m_1,m_2)\in\mathbb{S}^1$}
    \psline{->}(4.0,2.2)(4.0,2.5)\rput[b](4.0,2.55){\psscalebox{0.7}{$m_2$}}
    \psline{->}(4.0,2.2)(4.3,2.2)\rput[l](4.35,2.2){\psscalebox{0.7}{$m_1$}}
  \end{pspicture}
  \caption{$(m_1,m_2)$ on $\partial\Omega$.}
  \label{fig:degofm}
\end{figure}
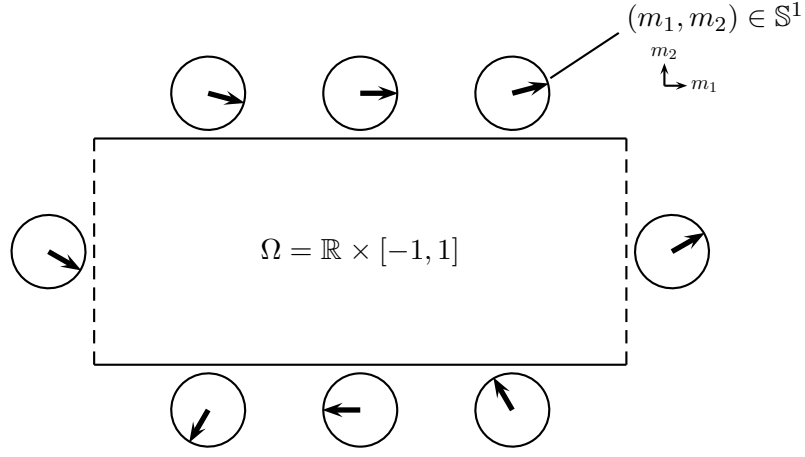
\end{proof}

In order to prove that the magnetization $m$ defined at \eqref{def_magul} belongs to $\dot{H}^1(\Omega)$ it is enough to check that
$f=1-|\nabla \psi|^2$ has the property that $\sqrt{f}$ is Lipschitz in $\R[2]$ where $\psi$ is the stream function constructed above:

\bigskip

\begin{lem}
\label{lem:m2h1}
Let $f\in C^2(\R[N],\R_+)$ be a non-negative function with $D^2 f\in L^\infty(\R[N])$. Then $\nabla\sqrt{f} \in L^\infty(\R[N])$ and we have
\begin{align}\label{eq:sqrtlip}
  \lVert\nabla \sqrt{f}\rVert_\infty^2 \leq \tfrac{1}{2} \lVert D^2 f\rVert_\infty.
\end{align}
\end{lem}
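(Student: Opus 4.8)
The plan is to reduce everything to the \emph{pointwise} Glaeser-type inequality
\begin{align*}
  |\nabla g(x)|^2 \leq 2\,\lVert D^2 g\rVert_\infty\, g(x)\qquad\text{for all }x\in\R[N],
\end{align*}
valid for any non-negative $g\in C^2(\R[N])$ with bounded Hessian, and then to upgrade it to the global Lipschitz bound on $\sqrt f$ by applying it to the strictly positive shifts $f+\varepsilon$ and passing to the limit $\varepsilon\downarrow0$.

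For the pointwise inequality I would argue one-dimensionally. Fix $x$; if $\nabla g(x)=0$ there is nothing to prove, so assume $\nabla g(x)\neq0$ and consider $\varphi(t):=g\bigl(x+t\,\tfrac{\nabla g(x)}{|\nabla g(x)|}\bigr)$, which is $C^2$, non-negative, with $\varphi'(0)=|\nabla g(x)|$ and $|\varphi''|\leq M:=\lVert D^2 g\rVert_\infty$ on $\R$. Taylor's formula with Lagrange remainder gives $\varphi(t)\leq\varphi(0)+\varphi'(0)t+\tfrac M2 t^2=:p(t)$ for every real $t$, and since $\varphi\geq0$ the quadratic $p$ is non-negative on all of $\R$. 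Assuming $M>0$ (the case $M=0$ forces $g$ affine and non-negative, hence constant, and is trivial), a non-negative quadratic with positive leading coefficient has non-positive discriminant, i.e. $\varphi'(0)^2\leq 2M\varphi(0)$, which is exactly the claimed inequality.

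Given this, I would set $M:=\lVert D^2 f\rVert_\infty$ and, for $\varepsilon>0$, apply the pointwise inequality to $f_\varepsilon:=f+\varepsilon$ (which is still non-negative and $C^2$ with $D^2 f_\varepsilon=D^2 f$) to obtain $|\nabla f|^2=|\nabla f_\varepsilon|^2\leq 2M f_\varepsilon$ on $\R[N]$. Since $f_\varepsilon\geq\varepsilon>0$, the function $\psi_\varepsilon:=\sqrt{f_\varepsilon}\in C^2(\R[N])$ satisfies $\nabla\psi_\varepsilon=\nabla f/(2\sqrt{f_\varepsilon})$, hence
\begin{align*}
  |\nabla\psi_\varepsilon|=\frac{|\nabla f|}{2\sqrt{f_\varepsilon}}\leq\frac{\sqrt{2M f_\varepsilon}}{2\sqrt{f_\varepsilon}}=\sqrt{\tfrac M2}\qquad\text{on }\R[N],
\end{align*}
so $\psi_\varepsilon$ is $\sqrt{M/2}$-Lipschitz on $\R[N]$, uniformly in $\varepsilon$. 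From $|\psi_\varepsilon-\sqrt f|=\varepsilon/(\sqrt{f_\varepsilon}+\sqrt f)\leq\sqrt\varepsilon$ I get $\psi_\varepsilon\to\sqrt f$ uniformly as $\varepsilon\downarrow0$, so $\sqrt f$ is $\sqrt{M/2}$-Lipschitz; Rademacher's theorem then yields $\nabla\sqrt f\in L^\infty(\R[N])$ with $\lVert\nabla\sqrt f\rVert_\infty\leq\sqrt{M/2}$, which is \eqref{eq:sqrtlip}. The only place that requires care is the behaviour of $\sqrt f$ across the zero set $\{f=0\}$, where $\sqrt{\cdot}$ is no longer smooth; this is precisely what the regularization $f\mapsto f+\varepsilon$ is designed to avoid, since it moves the whole problem into the region where $f>0$ and $\sqrt{\cdot}$ is $C^\infty$.
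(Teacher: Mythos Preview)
Your proof is correct and follows essentially the same strategy as the paper: both derive the pointwise Glaeser inequality $|\nabla f|^2\le 2\lVert D^2 f\rVert_\infty f$ from Taylor's formula (you via a one-dimensional reduction and a discriminant argument, the paper by evaluating the multivariate Taylor bound at the minimizing point $x-x_0=-\nabla f(x_0)/\lVert D^2 f\rVert_\infty$), and then both regularize by $f\mapsto f+\varepsilon$ to pass from the pointwise bound on $\{f>0\}$ to the global Lipschitz estimate for $\sqrt f$. The only cosmetic difference is that you conclude via uniform convergence of $\sqrt{f+\varepsilon}$ and Rademacher, whereas the paper integrates $\nabla\sqrt{f+\varepsilon}$ along segments before letting $\varepsilon\downarrow 0$.
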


\begin{proof} We distinguish two cases:

  \textbf{Case 1:} \textit{$D^2f \equiv 0$ on $\R[N]$, i.e., $f$ is an affine function.} Since by assumption $f\geq 0$ in $\R[N]$, one has $f \equiv \text{const}$. Thus, the assertion of Lemma~\ref{lem:m2h1} becomes trivial.

  \textbf{Case 2:} \textit{$\lVert D^2f \rVert_\infty > 0$.} Let $x,x_0\in\R[N]$. Taylor's expansion yields for some intermediate $\tilde{x}\in\R[N]$:
  \begin{align}
    0 \leq f(x) &= f(x_0) + \nabla f(x_0) \cdot (x-x_0) + \tfrac{1}{2} (x-x_0) \cdot D^2f(\tilde{x}) (x-x_0)\notag\\
    &\leq \lvert f(x_0) \rvert + \nabla f(x_0) \cdot (x-x_0) + \tfrac{1}{2} \lVert D^2 f \rVert_\infty \lvert x-x_0 \rvert^2.\label{eq:taylor}
  \end{align}
  Hence, choosing $x\in\R[N]$ such that $x-x_0 = -\tfrac{\nabla f(x_0)}{\lVert D^2f \rVert_\infty}$, we obtain
  \begin{gather*}
    \tfrac{\lvert \nabla f(x_0) \rvert^2}{\lVert D^2f \rVert_\infty} \leq 2 \lvert f(x_0) \rvert,
  \end{gather*}
  i.e.
  \begin{gather}
    \lvert \nabla \sqrt{f}(x_0) \rvert \leq \tfrac{1}{\sqrt{2}}\lVert D^2 f \rVert_\infty^\frac{1}{2}\quad\text{if }f(x_0)\neq 0\label{eq:sqrtlipfn0}.
  \end{gather}

  If there exist points at which $f$ vanishes, we apply \eqref{eq:sqrtlipfn0} to $f+\varepsilon$ instead of $f$ (with $\varepsilon >0$), and deduce for $x,y\in\R[N]$
  \begin{align*}
    \lvert \sqrt{f(x)+\varepsilon}-\sqrt{f(y)+\varepsilon} \rvert &\leq \int_0^1 \lvert (\nabla \sqrt{f+\varepsilon})(tx+(1-t)y) \rvert\, \lvert x-y\rvert\, dt\\
    &\stackrel{\eqref{eq:sqrtlipfn0}}{\leq} \tfrac{1}{\sqrt{2}} \lVert D^2f\rVert_\infty^\frac{1}{2} \lvert x-y\rvert.
  \end{align*}

  Letting $\varepsilon \tod 0$ we obtain $\lvert \sqrt{f(x)}-\sqrt{f(y)}\rvert \leq \tfrac{1}{\sqrt{2}} \lVert D^2 f \rVert^\frac{1}{2}_\infty \lvert x - y \rvert$ for all $x,y\in\R[N]$, such that \eqref{eq:sqrtlip} follows.
\end{proof}

\section{Proof of Lemma~\ref{lem-topo}}
The relation between the topological degree of $(m_1,m_2)$ on $\partial\Omega$ and the vortex singularity of $(m_1,m_3)$ observed in the previous construction is studied in Lemma~\ref{lem-topo} that we prove in the following:

\begin{proof}[Proof of Lemma \ref{lem-topo}] 
  Due to $\nabla\cdot m'=0$ in $\mathcal{D}'(\R[2])$ we may represent $m'=\nabla^\perp \psi$ for a stream function $\psi \colon \R[2] \to \R$ with $\psi(x_1,-1)=0$, $\psi(x_1,1)=-2\cos\theta$ for all $x_1 \in \R$. Under the hypothesis $m\in \dot{H}^1(\Omega, \mathbb{S}^2)$, one gets that $\nabla \psi\in \dot{H}^{1}\cap L^{\infty}(\Omega)$. Since $m$ has non-zero topological degree on $\overline{Bdry}$, the set $\left\{ x\in\partial\Omega \with m_1(x) < 0 \right\}$ is non-empty (recall that $m\in \dot{H}^{1/2}(\partial \Omega, \mathbb{S}^2)$). We assume that it has non-empty intersection with $\R\times \{-1\}$. (The other case is similar.) Since $m_1 = -\partial_{x_3} \psi<0$ and $\psi=0$ on that subset of $\R\times\{-1\}$, one sees that the set $\left\{ x \in \Omega \with \psi > 0 \right\}$ is non-empty. In particular, there exists a connected component $C$ of $\left\{ x \in \Omega \with \psi > 0 \right\}$ whose boundary intersects $\R\times\{-1\}$ on a set containing an interval (see Figure \ref{fig_curb}).
  
Since $\nabla \psi \in \dot{H}^1(\Omega)$, and taking into account the boundary conditions at $x_1=\pm\infty$, the Sobolev embedding theorem on sets $\{a\leq\lvert x_1 \rvert \leq a+1\}$ yields 
$\psi(x_1,x_3)\to-(x_3+1)\cos\theta$ uniformly in $x_3$ as $\lvert x_1 \rvert \tou \infty$.
Hence, any level set $\{\psi=\varepsilon\}$ for $\varepsilon>0$ is bounded, and $\psi$ attains a maximum $x_0$ in the interior of $C\subset\Omega$. Let $\beta_0=\psi(x_0)>0$.

\begin{figure}[h]
  \centering
  \includegraphics[width=12cm]{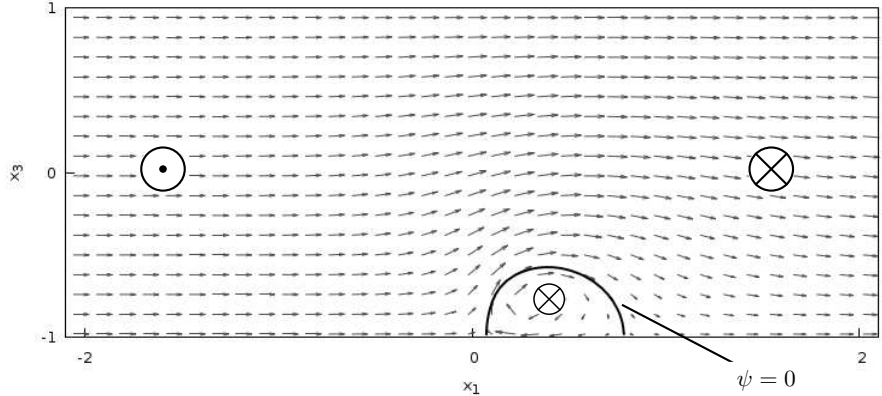}
  \rput(-2,3.3){
    \pscircle*[linecolor=white](0,0){0.31cm}
    \pscircle(0,0){0.3cm}
    \rput{45}(0,0){\psline(-0.3,0)(0.3,0)}
    \rput{-45}(0,0){\psline(-0.3,0)(0.3,0)}
  }
  \rput(-4.9,1.575){\psscalebox{0.7}{
    \pscircle*[linecolor=white](0,0){0.31cm}
    \pscircle(0,0){0.3cm}
    \rput{45}(0,0){\psline(-0.3,0)(0.3,0)}
    \rput{-45}(0,0){\psline(-0.3,0)(0.3,0)}
  }}
  \rput(-10,3.3){
    \pscircle*[linecolor=white](0,0){0.31cm}
    \pscircle(0,0){0.3cm}
    \pscircle*(0,0){0.05cm}
  }
  \psline(-3.9,1.5)(-2,0.5)\rput*(-2,0.5){\psscalebox{0.8}{$\psi=0$}}
  \caption{The zero level set of $\psi$ for an asymmetric Bloch wall of angle $\theta=0.7$ (cf. footnote in Fig.~\ref{asym}).}
\label{fig_curb}
\end{figure}

In the case of a vector field $m\in C^1(\Omega)$, so $\psi\in C^2(\Omega\subset \R[2], \R)$, by Sard's theorem, there exists a regular value $\beta\in (0, \beta_0)$ of $\psi$. In particular, there exists a smooth cycle ${\cal \gamma}\subset C$ such that $\psi\equiv \beta$ and $|\nabla \psi|>0$ on ${\cal \gamma}$. Therefore, $\nu:=\frac{\nabla \psi}{|\nabla \psi|}:{\cal \gamma}\to \mathbb{S}^1$ is a normal vector field at ${\cal \gamma}$ so that it carries a topological degree equal to $1$. Hence, $(m_1, m_3)=\nabla^\perp \psi$ presents a vortex singularity inside $\Omega$ carrying a non-zero winding number.
This argument is still valid for the general case $m\in \dot{H}^1(\Omega)$ (where $\nabla \psi \in \dot{H}^{1}(\Omega)$); in fact, Sard's theorem is valid also for $\psi\in W_{loc}^{2,1}(C)$ (see e.g., Bourgain-Korobkov-Kristensen \cite{BBK}) where we recall that $\psi>0$ on $C$ and $\psi=0$ on $\partial C$ so that almost all $\beta\in (0, \beta_0)$ is a regular value, i.e., the pre-image $\psi^{-1}(\beta)$ is a finite disjoint family of $C^1$-cycles and the normal vector field $\nu$ on each cycle is absolutely continuous, in particular, it carries a winding number $1$.
\end{proof}

\section*{Acknowledgements}
R.I. gratefully acknowledges the hospitality of Max Planck Institute (Leipzig) where part of this work was carried out; he also acknowledges partial support by the ANR projects ANR-08-BLAN-0199-01 and ANR-10-JCJC 0106. L.D. acknowledges support of the International Max Planck Research School. F.O. acknowledges the hospitality of Fondation Hadamard, and both L.D. and F.O. acknowledge the hospitality of the mathematics department of the University of Paris-Sud.

\bibliography{references}
\bibliographystyle{abbrv}
\end{document}